\newcommand{\cY}{\mathcal{Y}}
\newcommand{\gP}{\textbf{P}}
\newcommand{\bN}{\mathbb{N}}
\newcommand{\bP}{\mathbb{P}}
\newcommand{\bC}{\mathbb{C}}
\newcommand{\cO}{\mathcal{O}}
\newcommand{\cR}{\mathcal{R}}
\newcommand{\Om}{\Omega}
\newcommand{\cX}{\mathcal{X}}
\newtheorem*{thm}{Theorem}
\newtheorem*{thmglobal}{Theorem \ref{global}}
\newtheorem*{lem}{Lemma}
\newtheorem{theorem}{Theorem}[section]
\newtheorem{lemma}[theorem]{Lemma}
\newtheorem{e-proposition}[theorem]{Proposition}
\newtheorem{corollary}[theorem]{Corollary}
\newtheorem{e-definition}[theorem]{Definition\rm}
\newtheorem{remark}{\it Remark\/}
\DeclareMathOperator{\Bs}{Bs}
\DeclareMathOperator{\D}{D}
\title{Hyperbolicity Related Problems for Complete Intersection Varieties}
\author{Damian Brotbek}
\begin{document}
\maketitle
\begin{abstract}
In this paper we examine different problems regarding complete intersection varieties of high degree in a complex projective space. First we show how one can deduce hyperbolicity for generic complete intersection of high multidegree and high codimension from the known results on hypersurfaces. Then we prove a existence theorem for jet differentials that generalizes a theorem of S. Diverio. Finally, motivated by a conjecture of O. Debarre, we focus on the positivity of the cotangent bundle of complete intersections, and prove some results towards this conjecture; among other things, we prove that a generic complete intersection surface of high multidegree in a projective space of dimension at least four has ample cotangent bundle. 
\end{abstract}

%
%

\section{Introduction}

Varieties with ample cotangent bundle satisfy many interesting properties and are supposed to be abundant. However few examples of such varieties are known. 
In \cite{Deb05}, O. Debarre constructed such varieties by proving that the intersection of at least $\frac{N}{2}$ sufficiently ample general hypersurfaces in an $N$-dimensional abelian variety has ample cotangent bundle.
And motivated by this result, he conjectured that the analogous statement holds in projective space, that is to say, \emph{the intersection, in $\bP^N$, of at least $\frac{N}{2}$ generic hypersurfaces of sufficiently high degree has ample cotangent bundle}. 

It is a well known fact that varieties with ample cotangent bundle are hyperbolic in the sense of Kobayashi. In this sense, this conjecture on complete intersection varieties relates to Kobayashi's conjecture for hypersurfaces which predicts that a generic hypersurface of sufficiently high degree in $\bP^N$ is hyperbolic. 
A strategy towards this conjecture was explained by Y-T. Siu in \cite{Siu04}, and was afterwards further developed by S. Diverio, J. Merker and E. Rousseau in \cite{DMR10} to get effective algebraic degeneracy for entire curves on generic hypersurfaces of high degree.
Moreover S. Diverio and S. Trapani (\cite{D-T09}) generalized Debarre's conjecture to complete intersection varieties of any codimension by replacing the cotangent bundle by a suitable invariant jet differential bundle to give an unified point of view on those questions.\\

In this paper we study some aspects of those problems, and give some partial results towards Debarre's conjecture. Our main results can be summarized in the following statement.

\begin{theorem}
Let $X\subseteq \bP^N$ be a smooth complete intersection variety of dimension $n$, codimension $c$ and multidegree $(d_1,\cdots, d_c)$. Let $A$ be an ample divisor on $X$ and suppose that the $d_i$'s are big enough. Then

\begin{enumerate}
\item For $k\geq \lceil \frac{n}{c} \rceil$ and  $m \gg 0$ there is a non zero invariant jet differential of order $k$ and weight $m$ (ie, $H^0(X,E_{k,m}\Om_X\otimes A^{-1})\neq 0$). \label{MSjet}
\item If $2c\geq n$ and $X$ is generic then $X$ is hyperbolic. \label{MShyperbolic}
\item If $c\geq n$ then $\Om_X$ is numerically positive.\label{MSnumpos}
\item If $c\geq n$ and $X$ is generic then $\Om_X$ is big and almost everywhere ample.\label{MSAEample}
\item If $n=2$, $N\geq 4$ and $X$ is generic then $\Om_X$ is ample.\label{MSample}
\end{enumerate}
\end{theorem}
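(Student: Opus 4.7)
The five parts are mutually reinforcing, so I would prove them sequentially. Part (\ref{MShyperbolic}) should follow from the known hyperbolicity of generic hypersurfaces of high degree (Diverio--Merker--Rousseau) by exhibiting $X$ inside a higher-dimensional hypersurface and transferring hyperbolicity via successive generic slicing; the condition $2c \geq n$ reflects the number of slicings needed before the dimension drops out of range. Part (\ref{MSjet}) is a Riemann--Roch / Demailly--Semple tower computation following Diverio, with the bound $k \geq \lceil n/c\rceil$ coming from positivity of the top cohomological term. Part (\ref{MSnumpos}) reduces to $K_X \cdot C > 0$ for every irreducible curve $C \subset X$, which is immediate since $K_X = (\sum d_i - N - 1)\,H|_X$ is ample for large $d_i$. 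Part (\ref{MSAEample}) uses (\ref{MSjet}) with $k = 1$ together with a genericity argument: global sections of $S^m\Omega_X \otimes A^{-1}$ give bigness of $\cO_{\bP(\Omega_X)}(1)$, and a Seshadri-type argument over a generic family of complete intersections upgrades this to almost-everywhere ampleness.

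The most substantial step is (\ref{MSample}). Since $n = 2$ and $N \geq 4$ give $c \geq n$ and $2c \geq n$, parts (\ref{MShyperbolic})--(\ref{MSAEample}) all apply: $X$ is Kobayashi hyperbolic, $\Omega_X$ is numerically positive, big, and almost everywhere ample. I would reformulate the problem on $\bP(\Omega_X)$: ampleness of $\Omega_X$ is ampleness of $\cO_{\bP(\Omega_X)}(1)$, and almost-everywhere ampleness gives $\mathbb{B}_+(\cO(1)) \subseteq \pi^{-1}(Y)$ for some proper Zariski-closed $Y \subsetneq X$. Hence $\Omega_X$ is ample iff, for every irreducible curve $C \subset X$, the rank-$2$ pullback $\nu^*\Omega_X$ on the normalization $\nu: \tilde C \to X$ is ample, equivalently, every saturated sub-line bundle $M \subset \nu^*\Omega_X$ satisfies $\deg M < D := \deg \nu^*c_1(\Omega_X)$. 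By (\ref{MShyperbolic}), $g(\tilde C) \geq 2$, and by (\ref{MSnumpos}), $D > 0$.

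To rule out a destabilizing $M$, I would combine the symmetric differentials of (\ref{MSjet}) with the Noether--Lefschetz property: for a very general complete intersection $X$ of high multidegree in $\bP^N$ with $N \geq 4$, $\mathrm{Pic}(X) = \mathbb{Z}\cdot H$, so $[C] = eH$ for some $e \geq 1$ and $D = e(\sum d_i - N - 1)\prod d_i$ is explicit. Restricting a section $\omega \in H^0(X, S^m\Omega_X \otimes A^{-1})$ to $\tilde C$ and examining it in the filtration of $S^m(\nu^*\Omega_X)$ induced by $M$ (with graded pieces $M^{\otimes i} \otimes (\nu^*\Omega_X/M)^{\otimes(m-i)}$) forces an upper bound on $\deg M$ depending on $e$, $m$, and the $d_i$. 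For sufficiently large multidegree, this bound is strictly less than $D$, yielding the required contradiction.

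The main obstacle is this last quantitative step: making the bound $\deg M < D$ uniform over \emph{every} irreducible curve $C \subset X$, including singular and reducible ones, where the conormal sequence $0 \to \nu^*N^*_{C/X} \to \nu^*\Omega_X \to \Omega_{\tilde C} \to 0$ must be handled carefully on the normalization. Combining the effective constants hidden in the proof of (\ref{MSjet}) with Noether--Lefschetz bounds on the Hilbert scheme of curves in $X$ to conclude simultaneously for all $C$ is the technical bottleneck, and where the ``sufficiently large multidegree'' hypothesis is fully exploited.
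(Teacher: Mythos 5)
Your high-level reading of the structure is right in places (part 1 is a Morse-inequalities computation on the Demailly--Semple tower, part 4 is bigness plus a genericity argument over the universal family), but there are three genuine gaps.

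First, your treatment of part (\ref{MSnumpos}) is based on a misreading of what ``numerically positive'' means here. It is not $K_X\cdot C>0$ for curves $C$. It is the Fulton--Lazarsfeld notion: for \emph{every} subvariety $Y\subseteq X$ of dimension $\ell$ and \emph{every} partition $\lambda$ of $\ell$, $\int_Y\Delta_\lambda(c(\Omega_X))>0$. For a rank-$n$ bundle on an $n$-fold this is a long list of Schur-polynomial positivity conditions, of which $c_1\cdot C>0$ is only the first. The paper's proof is a Segre-class computation: one shows that the top-degree (in the $d_i$) part of each $\tilde s_\ell(\Omega_X)$ agrees with $\tilde c_\ell\bigl(\bigoplus_i\cO_X(d_i)\bigr)$, converts Schur polynomials in $c$ to Schur polynomials in $s$ via the conjugate-partition identity, and then invokes Fulton--Lazarsfeld for the honestly ample bundle $\bigoplus_i\cO_X(d_i)$. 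Your reduction to ampleness of $K_X$ proves only a tiny piece of the statement.

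Second, part (\ref{MShyperbolic}) cannot be deduced from ``known hyperbolicity of generic hypersurfaces'': DMR do not prove hyperbolicity of hypersurfaces (that is still open), they prove \emph{algebraic degeneracy}, i.e.\ that every entire curve lands in a fixed subvariety $Y\subset H$ of codimension at least $2$. The paper's mechanism is to take $c$ such hypersurfaces, translate each by a generic $g_i\in GL_{N+1}(\bC)$, and use a moving lemma to show that the intersection of the $g_i\cdot Y_i$ has dimension $N-3c$; the degeneracy locus of the complete intersection lies inside this set, which is empty once $3c>N$ (equivalently $2c\geq n$). ``Slicing'' a hypersurface by hyperplanes, which is what your description suggests, does not propagate the degeneracy locus in this way and would not recover the bound.

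Third, and most substantially, your approach to part (\ref{MSample}) is a different route that you yourself flag as incomplete. The paper does not analyze destabilizing sub-line bundles $M\subset\nu^*\Omega_X$ on normalizations curve by curve; that program (bounding $\deg M$ uniformly over all curves, including singular ones, and combining with Noether--Lefschetz) is left with what you call a ``technical bottleneck,'' and indeed it is not carried out. What the paper actually does is prove a stronger statement about the universal complete intersection: using a twisted nonvanishing (part 1 with $k=1$ and a suitable twist $-a$) together with a Merker-style global generation theorem for vector fields on the vertical tangent of the family, it shows that $\cO_{\bP(\Omega_X)}(1)\otimes\pi^*\cO_X(-a)$ is big \emph{and} its base locus projects into a subset of codimension at least $2$ in $X$. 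When $n=2$ this subset is finite, and ampleness of $\Omega_X$ follows because the only curves on which $\cO_{\bP(\Omega_X)}(1)$ could fail to be positive would have to sit over a point, where $\cO(1)$ restricts to $\cO_{\bP^1}(1)$. So the codimension-$2$ control of the base locus is exactly the missing ingredient that replaces your unclosed destabilization argument, and it is obtained not by estimating slopes of sub-line bundles but by differentiating sections along low-pole-order vector fields in the family.
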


The paper is divided into three parts. The first part (section \ref{hyperbolicity}) is focused on the proof of \ref{MShyperbolic}, this is a consequence of the results on hypersurfaces proved in \cite{DMR10} and \cite{D-T09}. In the second part (section \ref{sectionnonannulation}) we prove statement \ref{MSjet}, which is a generalization of a non vanishing proved by S. Diverio \cite{Div09}. The last part (section \ref{positivitecotangent}) is dedicated on giving partial results toward Debarre's conjecture. We prove statement \ref{MSnumpos} in section \ref{numericalpositivity}, then in section \ref{mainresult} we will adapt the technics developed in \cite{DMR10} to prove statements \ref{MSAEample} and \ref{MSample}.

%
%

\section{Algebraic Degeneracy and Hyperbolicity for Complete intersections}\label{hyperbolicity}

Here we show that from the work of \cite{DMR10} and \cite{D-T09} one can straightforwardly deduce the hyperbolicity of generic complete intersections of high codimension and of high multidegree, just by "moving" the hypersurfaces we are intersecting. We begin with a definition.


\begin{e-definition}
Let $X$ be a projective variety, we define the algebraic degeneracy locus to be the Zariski closure of the union of all non-constant entire curves $f:\mathbb{C}\to X$ 
$$dl(X):= \overline{\bigcup{f(\mathbb{C})}}.$$
\end{e-definition}

Recall the main result proven in \cite{DMR10} and \cite{D-T09}.

\begin{thm}\label{D-T}
There exists $\delta \in \mathbb{N}$ such that if $H$ is a generic hypersurface of degree $d\geq \delta$, then there exists a proper algebraic subset $Y \subset H$ of codimension at least two in $H$ such that  $dl(H)\subset Y$.
\end{thm}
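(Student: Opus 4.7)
My plan would be to follow the Siu--Demailly strategy, adapted as in \cite{DMR10} and extended by \cite{D-T09} to obtain codimension two rather than codimension one degeneracy. The starting point is Demailly's fundamental vanishing theorem: any entire curve $f:\bC\to H$ must satisfy every global invariant jet differential on $H$ with values in the inverse of an ample line bundle. So the first step is to produce, for $d$ large enough, sufficiently many sections of $E_{k,m}\Om_H\otimes \cO_H(-A)$ for a suitable ample $A$ and some $k$ of the order of $\dim H$. This is obtained by a Riemann--Roch / Schur polynomial estimate on a Demailly tower, combined with a Bogomolov-type vanishing for the higher cohomology, following Diverio's non-vanishing results (which is precisely the point generalized in statement \ref{MSjet} of the main theorem).

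The second step is to turn these jet differentials into an effective constraint on $f$. One works in the universal family $\cX\to S$ of hypersurfaces of degree $d$ in $\bP^N$ and considers the relative Demailly--Semple bundle $X_k^{\text{univ}}\to \cX$. The crucial technical ingredient, going back to Siu and developed in \cite{DMR10}, is the construction of enough \emph{slanted} meromorphic vector fields on the total space of the jet bundle, tangent to the fibres of the projection to $S$, with poles of controlled order. Applying these vector fields to a single jet differential $P$ and using the vanishing of $f_{[k]}$ along $P$ produces, at the cost of twisting by a bigger and bigger power of $\cO_{\bP^N}(1)$, new differential equations satisfied by $f$. Choosing $d$ large enough so that the loss of positivity is absorbed by the ample twist $\cO_H(-A)$, one concludes that the image of $f$ lies in a proper subvariety $Y\subset H$.

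To upgrade from algebraic degeneracy to $\codim_H Y\geq 2$, which is the extra content proven in \cite{D-T09}, the plan is to iterate: once one knows $f(\bC)\subset Y$ with $Y$ of codimension one, one can restrict the whole machinery to $Y$ itself and, if one has enough independent jet differentials to begin with, produce a further vanishing that forces $f(\bC)$ into a subset $Y'\subset Y$ of codimension at least two in $H$. Concretely this amounts to checking that the space of jet differentials one produced in step one is large enough, and that the slanted vector field construction still applies after restriction to a generic codimension one $Y$; here one uses a genericity argument for $H$ to ensure that no positive-dimensional component of $dl(H)$ can sit inside a single hypersurface section of $H$.

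The main obstacle, and the heart of \cite{DMR10} and \cite{D-T09}, is the effective construction of the slanted vector fields with sharp pole order, since this is what determines the bound $\delta$ on $d$ and whether the residual vanishing is strong enough to cut out a codimension two locus. Everything else (Demailly's vanishing theorem, the non-vanishing for jet differentials, and the descent argument) is by now fairly standard, but controlling the pole order of the vector fields to obtain \emph{codimension two} degeneracy, rather than merely codimension one, is where the real work lies.
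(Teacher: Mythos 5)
The paper does not prove this theorem: it is recalled verbatim from \cite{DMR10} and \cite{D-T09}, and the author simply cites it (see the sentence immediately preceding the statement). So there is no ``paper's own proof'' to compare against. That said, a few remarks on your sketch are in order.

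Your outline of the \cite{DMR10} core -- Demailly's fundamental vanishing theorem, production of jet differentials on the Demailly tower via Morse/Riemann--Roch estimates, the slanted meromorphic vector fields on the relative jet space of the universal family, and differentiating a jet differential to cut out the degeneracy locus while controlling pole order against the ample twist -- is accurate and captures the structure of that paper. Where your sketch deviates from what \cite{D-T09} (and, in a parallel setting, section \ref{mainproof} of this paper) actually does is the codimension-two refinement. You propose to ``restrict the whole machinery to $Y$ and iterate,'' but $Y$ is not a hypersurface in $\bP^N$, so the slanted vector field construction and the jet non-vanishing results do not directly apply to it; this iteration is not straightforward to carry out. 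The actual mechanism is different and cleaner: one shows that the degeneracy locus sits inside the zero set of a single section $\sigma$, then, if that zero set has a divisorial component $E$, one uses Lefschetz ($\mathrm{Pic}(H)=\mathbb{Z}$ for $N\geq 4$) to conclude $E$ is ample, takes a power $\sigma^m$ and \emph{replaces} the component $mE$ by an arbitrary member $D$ of the base-point-free linear system $|mE|$, obtaining a new valid section whose divisorial component is $D$. Since the degeneracy locus must lie inside the zero set of every such modified section, and $\bigcap_{D\in|mE|} D=\emptyset$, the degeneracy locus must lie in the codimension-$\geq 2$ part. So your first three paragraphs are a faithful summary, but the fourth substitutes a different (and unjustified) mechanism for the codimension-two step.
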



\begin{remark}\label{zariski}
In fact they prove something slightly stronger. Consider the universal hypersurface of degree $d$ in $\mathbb{P}^N$
$$\mathcal{H}_d=\left\{(x,t)\in \mathbb{P}^N \times \mathbb{P}^{N_d}\;\;\; /\;\;\; x\in H_{d,t} \right\}$$
where $\mathbb{P}^{N_d}:=\mathbb{P}(H^0(\mathbb{P}^N,\mathcal{O}_{\mathbb{P}^N}(d))^*)$ and $H_{d,t}=(t=0)$. Denote $\pi_d$ the projection on the second factor. 
During the proof of Theorem $\ref{D-T}$ it is shown that for $d\gg 0$ there exists an open subset $U_d\subset \mathbb{P}^{N_d}$ and an algebraic subset $\mathcal{Y}_d\subset \mathcal{H}_{d|U_d}\subset U_d\times\mathbb{P}^N$ such that for all $t\in U_d$, the fibre $Y_{d,t}$ has codimension 2 in $H_{d,t}$ and $dl(H_{d,t})\subset Y_{d,t}$.
\end{remark}


\begin{remark}
A major result of \cite{DMR10} is that $\delta$ is effective. They prove that $\delta \leqslant 2^{(N-1)^5}$. In what follows, any bound found on $\delta$ could be used.
\end{remark}


We are going to use the standard action of $G:= Gl_{N+1}(\mathbb{C})$ on $\mathbb{P}^N$. For any $g\in G$ and any variety $X\subseteq \mathbb{P}^N$ we write $g\cdot X:=g^{-1}(X)$.


\begin{remark}\label{rkaction} 
Let $g\in G$ and $X \subset \mathbb{P}^N$ a projective variety. If $f:\mathbb{C}\to g\cdot X$ is a non-constant entire curve then $g\circ f :\mathbb{C}\to X$ is a non-constant entire curve, therefore $g\circ f (\mathbb{C})\subseteq Y$ and thus $f(\mathbb{C})\subseteq g\cdot Y$. This proves that $g\cdot dl(X)= dl(g\cdot X)$
\end{remark}


\begin{remark}\label{cap}
Note also that if $X_1$ and $X_2$ are two projective varieties in $\mathbb{P}^N$, then $dl(X_1\cap X_2)\subseteq dl(X_1)\cap dl(X_2)$. 
\end{remark}


We will combine these remarks with the following moving lemma.


\begin{lemma}\label{moving}
Let $V\subset \mathbb{P}^N$ and $W\subset \mathbb{P}^N$ be algebraic subsets such that $\dim (V)=~n$ and $\dim (W)= m$. Then for a generic $g\in G$, we have  $$\dim (g\cdot V \cap W)= \max\left\{n+m-N,0\right\}.$$
\end{lemma}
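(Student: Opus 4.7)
The plan is to analyse the incidence variety
$$I := \{(g,x) \in G \times W : g^{-1}(x) \in V\}$$
through its two projections $p_1 : I \to G$ and $p_2 : I \to W$, exploiting the fact that $G$ acts transitively on $\bP^N$. Note that the fibre of $p_1$ over $g$ is precisely $\{g\} \times (g \cdot V \cap W)$, so computing generic fibre dimensions of $p_1$ will yield the lemma.

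First I would compute $\dim I$ using $p_2$. For each $x \in W$, the evaluation morphism $\mathrm{ev}_x : G \to \bP^N$, $g \mapsto g^{-1}(x)$, is a surjective smooth morphism: its fibres are left cosets of the parabolic stabiliser of $x$, each of dimension $\dim G - N$. Consequently $\mathrm{ev}_x$ is flat with equidimensional fibres, so $p_2^{-1}(x) = \mathrm{ev}_x^{-1}(V)$ is equidimensional of dimension $n + \dim G - N$. Summing over $W$ yields $\dim I = m + n + \dim G - N$.

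Next I would analyse $p_1$, splitting into two cases. If $n + m \geq N$, the projective dimension theorem gives the lower bound $\dim(g \cdot V \cap W) \geq n + m - N$ for \emph{every} $g \in G$, so in particular no fibre of $p_1$ is empty and $p_1$ is surjective. The theorem on generic fibre dimension then forces $\dim(g \cdot V \cap W) = \dim I - \dim G = n + m - N$ on a dense open subset of $G$, matching the lower bound. If on the other hand $n + m < N$, then $\dim I < \dim G$, so $p_1$ cannot be dominant; its image is a proper closed subset of $G$, and for every $g$ in its complement $g \cdot V \cap W = \emptyset$, which agrees with $\max\{n+m-N,0\} = 0$ under the convention that the empty intersection has dimension at most zero.

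The only point requiring care is the claim that $\mathrm{ev}_x$ is a smooth surjection with equidimensional fibres, but this is immediate from the transitive homogeneous action of $G$ on $\bP^N$. I do not foresee any real obstacle; one could also deduce the statement directly from Kleiman's transversality theorem for transitive group actions, but the incidence-variety argument is elementary and behaves well when $V$ or $W$ is reducible (by applying it to each irreducible component).
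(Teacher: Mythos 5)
Your argument is correct, and it takes a genuinely different route from the paper's. The paper proves the lemma by an elementary induction on $m = \dim W$: it cuts $W$ with a hyperplane $H$ so that $\dim(W\cap H)\leq m-1$, invokes the inductive hypothesis for $g\cdot V\cap (W\cap H)$ with $g$ generic, and then derives a contradiction from the existence of a too-large component $Z$ of $g\cdot V\cap W$ by observing that $Z\cap H$ would violate that hypothesis. Your proof instead sets up the incidence variety $I\subset G\times W$, computes $\dim I = m + n + \dim G - N$ by fibering over $W$ (using the transitive $G$-action to see that each $\mathrm{ev}_x^{-1}(V)$ is equidimensional of dimension $n+\dim G - N$), and then reads off the generic fiber dimension of $p_1:I\to G$, combined with the projective dimension theorem for the lower bound when $n+m\geq N$. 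This is essentially Kleiman's transversality theorem specialized to $\mathrm{PGL}_{N+1}$ acting on $\bP^N$, as you note yourself. The trade-off is that your approach requires the fiber-dimension theorem and the homogeneity of the action, but it generalizes immediately to any transitive algebraic group action, whereas the paper's argument is self-contained and elementary but exploits the specific geometry of hyperplanes in $\bP^N$. Two small remarks: first, with the paper's convention $g\cdot V := g^{-1}(V)$, the condition in your incidence variety should be $g(x)\in V$ rather than $g^{-1}(x)\in V$; since $g\mapsto g^{-1}$ is an automorphism of $G$ this has no effect on the genericity statement, but the identification of the $p_1$-fiber with $g\cdot V\cap W$ as written is off by an inverse. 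Second, you are right to flag the edge case $n+m<N$: the generic intersection is then empty, and the lemma's equality $\dim(g\cdot V\cap W)=0$ only holds under the (implicit) convention $\dim\emptyset \leq 0$; the paper's own proof passes over this point with the claim that $\dim(g\cdot V\cap W)\geq\max\{n+m-N,0\}$ is ``obvious,'' which really requires the same reading.
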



\begin{proof}
We will proceed by induction on $m$. The case $m=0$ is clear.
Now for the general case, let $H$ be a hyperplane of $\mathbb{P}^N$, such that $\dim(H\cap W)\leqslant m-1$ (such a hyperplan clearly exists). Now we take a generic $g\in G$, such that, by induction hypothesis, $\dim(g\cdot V \cap (W \cap H))= \max\left\{n+m-1-N,0\right\}$. Now argue by contradiction, obviously $\dim(g\cdot V \cap W)\geq \max\left\{n+m-N,0\right\}$. If $\dim(g\cdot V \cap W)> \max\left\{n+m-N,0\right\}$ then 
there exists an irreducible $Z\subset g\cdot V \cap W$ such that $\dim (Z)=\alpha>\max\left\{n+m-N,0\right\}$. But then $\dim (Z\cap H)\geqslant \alpha -1$, and since $Z\cap H \subset g\cdot V \cap (W\cap H)$, we get $\max\left\{n+m-N-1,0\right\} < \alpha-1 \leqslant \dim(Z\cap H) \leqslant \dim (g\cdot V \cap (W\cap H)) \leqslant \max\left\{n+m-1-N,0\right\}$, which yields the desired contradiction.
\end{proof}

\begin{corollary}\label{pseudogeneric}
Let $c\geqslant 1$, take $c$ hypersurfaces $H_1,\cdots ,H_c \subset \mathbb{P}^N$ satisfying the conclusions of Theorem $\ref{D-T}$ and take generic $g_2\cdots ,g_c\in G$. Consider $X:= H_1 \cap g_2\cdot H_2 \cap \cdots \cap g_c\cdot H_c$ (which is a smooth complete intersection). Then there is an algebraic subset $Y \subset X$ of dimension $N-3c$ in $X$, such that $dl(X)\subseteq Y $. In particular $X$ is hyperbolic if $c\geqslant \frac{N}{3}$. 
\end{corollary}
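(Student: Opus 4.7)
The strategy is to track $dl(X)$ through the exceptional loci provided by Theorem~\ref{D-T} and then use Lemma~\ref{moving} iteratively to control the dimension of their intersection. By Theorem~\ref{D-T}, for each $i\in\{1,\dots,c\}$ there is an algebraic subset $Y_i\subset H_i$ of codimension $2$ in $H_i$ (so $\dim Y_i = N-3$) with $dl(H_i)\subseteq Y_i$. Remark~\ref{rkaction} gives $dl(g_i\cdot H_i)=g_i\cdot dl(H_i)\subseteq g_i\cdot Y_i$ for each $i\geq 2$, and iterating Remark~\ref{cap} yields
$$dl(X)\;\subseteq\; dl(H_1)\cap\bigcap_{i=2}^c dl(g_i\cdot H_i)\;\subseteq\; Y_1\cap\bigcap_{i=2}^c g_i\cdot Y_i \;=:\; Y,$$
and by construction $Y\subseteq H_1\cap\bigcap_{i\geq 2}g_i\cdot H_i = X$. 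So it suffices to bound $\dim Y$ and to verify that $X$ is genuinely a smooth complete intersection of the expected dimension.

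Both points are handled by Lemma~\ref{moving}. Setting $Z_1:=Y_1$ and $Z_j:=Z_{j-1}\cap g_j\cdot Y_j$, the lemma (applied with $V=Y_j$ and $W=Z_{j-1}$) gives, for generic $g_j$, $\dim Z_j=\max\{\dim Z_{j-1}+(N-3)-N,\,0\}=\max\{\dim Z_{j-1}-3,\,0\}$, so by induction $\dim Y=\dim Z_c\leq \max\{N-3c,\,0\}$. A parallel iterative use of Lemma~\ref{moving} applied to the $H_i$ themselves (combined with Kleiman's transversality theorem, which applies because $G$ acts transitively on $\mathbb{P}^N$) shows that for generic $(g_2,\dots,g_c)$ the variety $X$ is a smooth complete intersection of dimension $N-c$.

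For the hyperbolicity statement, assume $c\geq N/3$, so $\dim Y\leq 0$ and $Y$ is a finite set. Any non-constant holomorphic map $f:\mathbb{C}\to X$ would have connected image lying in $dl(X)\subseteq Y$ and hence in a single point, contradicting non-constancy. Since $X$ is compact, Brody's theorem then gives Kobayashi hyperbolicity. The one point that requires care is that each application of Lemma~\ref{moving} produces only a dense open subset of the relevant factor of $G^{c-1}$; one must check that the finitely many open conditions at play — the successive dimension bounds $\dim Z_j\leq\max\{N-3j,0\}$ together with smoothness and transversality of $X$ — are simultaneously realizable. This is automatic, as a finite intersection of non-empty Zariski-open subsets of $G^{c-1}$ is non-empty, and this packaging of the moving lemma is really the only technical step of the argument.
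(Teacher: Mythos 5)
Your argument is correct and follows the same route as the paper: induct on the number of hypersurfaces, use Remarks~\ref{rkaction} and~\ref{cap} to trap $dl(X)$ in $Y_1\cap\bigcap_{i\geq 2}g_i\cdot Y_i$, and control the dimension of this intersection step by step with Lemma~\ref{moving}. The extra remarks you add (Kleiman transversality for smoothness of $X$, Brody's theorem to pass from the emptiness of $dl(X)$ to Kobayashi hyperbolicity, and the observation that finitely many nonempty Zariski-open conditions on $G^{c-1}$ can be met simultaneously) are sound fills of points the paper leaves implicit, not a different strategy.
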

\begin{proof}
This is a simple induction on $c$. The case $c=1$ is just the content of Theorem $\ref{D-T}$. Let $c>0$ and suppose that $X_{c-1}:= H_1 \cap g_2\cdot H_2 \cap \cdots \cap g_c\cdot H_{c-1}$ is a smooth complete intersection that contains a proper subset $Y_{c-1}$ of dimension $N-3c+3$ in $X$, such that $Y_{c-1}$ contains the image of all the entire curves in $X$. By hypothesis there is an algebraic subset $Y_c\subset X_c$ of dimension $N-3$, such that $dl(H_c)\subseteq Y_c$.  Now for generic $g_c\in G$, $X_{c-1}\cap g_c\cdot H_c$ is a smooth complete intersection and moreover on can apply Lemma $\ref{moving}$ to $Y_{c-1}$ and $Y_c$ and thus
 
$$\dim (g_c\cdot Y_c  \cap Y_{c-1})= N-3+N-2c+1-N= N-3c.$$
Now, Remarks \ref{rkaction} and \ref{cap} yield $dl(X)\subseteq Y$.
\end{proof}

\begin{corollary}\label{HCI}
Let $X=H_1\cap \cdots \cap H_c \subset \mathbb{P}^N$, $H_i \in H^0(\mathbb{P}^N,\mathcal{O}_{\mathbb{P}^N}(d_i))$ be a generic smooth complete intersection such that $d_i\geq \delta $. Then there is an algebraic subset $Y \subset X$ of dimension $N-3c$ in $X$, such that $dl(X)\subseteq Y $. In particular $X$ is hyperbolic if $c\geqslant \frac{N}{3}$. 
\end{corollary}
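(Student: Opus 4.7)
The plan is to reduce the statement to Corollary \ref{pseudogeneric} via a change of coordinates. The essential new ingredient is the $G$-invariance of the Zariski open subset $U_d \subseteq \mathbb{P}^{N_d}$ from Remark \ref{zariski}: if $H \in U_d$ with $dl(H) \subseteq Y$ of codimension $2$ in $H$, then Remark \ref{rkaction} gives $dl(g\cdot H) = g\cdot dl(H) \subseteq g\cdot Y$ with $g\cdot Y$ still of codimension $2$ in $g\cdot H$. Hence $G\cdot U_d = U_d$.

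Given a generic tuple $(H_1,\ldots,H_c) \in \prod_i \mathbb{P}^{N_{d_i}}$, each $H_i$ will lie in $U_{d_i}$ and $X = \bigcap_i H_i$ will be a smooth complete intersection. The goal is then to realize $X$ as a pseudo-generic intersection in the sense of Corollary \ref{pseudogeneric}. For parameters $(g_2,\ldots,g_c) \in G^{c-1}$, set $K_i := g_i^{-1}\cdot H_i$; by the $G$-invariance above, $K_i \in U_{d_i}$, and $H_i = g_i\cdot K_i$, so
\[
X \;=\; H_1 \cap g_2\cdot K_2 \cap \cdots \cap g_c\cdot K_c.
\]
If $(g_2,\ldots,g_c)$ is sufficiently generic, Corollary \ref{pseudogeneric} applied to $(H_1,K_2,\ldots,K_c)$ produces the desired algebraic subset $Y \subseteq X$ of dimension $N-3c$ containing $dl(X)$.

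The subtle point is that the genericity required of $(g_2,\ldots,g_c)$ in Corollary \ref{pseudogeneric} depends on $(K_2,\ldots,K_c)$, which themselves depend on $(g_2,\ldots,g_c)$. I would bypass this circularity by recasting the construction as the dominant morphism
\[
\Phi \,:\, U_{d_1} \times \cdots \times U_{d_c} \times G^{c-1} \longrightarrow \mathbb{P}^{N_{d_1}} \times \cdots \times \mathbb{P}^{N_{d_c}}, \quad (K,g) \longmapsto (K_1, g_2\cdot K_2,\ldots,g_c\cdot K_c),
\]
which is dominant because on the slice $g_i = \mathrm{id}$ it restricts to the open inclusion $\prod_i U_{d_i} \hookrightarrow \prod_i \mathbb{P}^{N_{d_i}}$. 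The conditions on $(g_2,\ldots,g_c)$ appearing in the proof of Corollary \ref{pseudogeneric} are Zariski open in the full source, since the induction reduces them to upper-semicontinuous dimension conditions via repeated applications of Lemma \ref{moving}; they thus define a Zariski open subset $W$ of the source. By generic smoothness of $\Phi$ in characteristic zero, $\Phi(W)$ contains a non-empty Zariski open subset of the irreducible target, and every tuple $(H_1,\ldots,H_c)$ in this open subset satisfies the desired conclusion. The main obstacle will be verifying carefully that the various genericity conditions from Corollary \ref{pseudogeneric} assemble into a globally open subset $W$ of the source (and not merely a fiberwise open locus), which requires tracking the family version of each application of Lemma \ref{moving}.
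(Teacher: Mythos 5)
Your proof is correct and reaches the conclusion by the same high-level plan as the paper --- reduce to Corollary \ref{pseudogeneric} by a Zariski-genericity argument over the universal parameter space --- but by a genuinely different mechanism. The paper intersects the universal families $\mathcal{Y}_{d_i}$ over $U=\prod U_{d_i}$ and invokes upper semicontinuity of $t\mapsto\dim Y_t$, citing Corollary \ref{pseudogeneric} for the base case; taken literally, this step asks that the universal fibre $Y_\tau$ at the parameter $\tau$ of $H_1\cap g_2\cdot H_2\cap\cdots\cap g_c\cdot H_c$ agree with the set $Y_1\cap g_2\cdot Y_2\cap\cdots$ produced there, which silently uses a $G$-equivariance of the family $(U_d,\mathcal{Y}_d)$ that the paper never states. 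Your route via the morphism $\Phi$ sidesteps that identification entirely: the dimension-lowering conditions from Lemma \ref{moving} cut out an open dense $W$ in the irreducible source (each condition is the locus of minimal fibre dimension of a proper family, hence open by upper semicontinuity, and nonempty by Lemma \ref{moving} applied slicewise), and since $\Phi$ is a smooth morphism --- indeed $(K,g)\mapsto(\Phi(K,g),g)$ is an open immersion and $\Phi$ is its composition with a projection --- $\Phi(W)$ is open dense in the target; a generic $(H_1,\ldots,H_c)$ then lies in $\Phi(W)$ and inherits a witness $Y=Y_{K_1}\cap g_2\cdot Y_{K_2}\cap\cdots$ of dimension $N-3c$ containing $dl(X)$ by Remarks \ref{rkaction} and \ref{cap}, with no need for $Y$ to match any fibre of the universal $\mathcal{Y}$. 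This is more elaborate than the paper's two-line proof but arguably more careful. Two small corrections: the opening ``$G$-invariance of $U_d$'' observation is not actually needed once you pass to the $\Phi$-framing, and as stated it conflates ``satisfies the conclusion of Theorem \ref{D-T}'' with ``lies in the particular open $U_d$'' from Remark \ref{zariski}, which is not automatic; and ``generic smoothness'' is not the right tool for concluding that $\Phi(W)$ contains a dense open --- smoothness of $\Phi$ itself (giving openness of the map), or Chevalley's theorem combined with dominance of $\Phi|_W$, is what you want.
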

\begin{proof}
With the notations of remark $\ref{zariski}$, consider $U=U_{d_1}\times \cdots \times U_{d_c}$, $\mathcal{Y}= \mathcal{Y}_{d_1}\cap \cdots \cap \mathcal{Y}_{d_1} $. It is well known that $\dim(Y_t)$ is an upper semi continuous function on $U$. Therefore we only need to show that there is no Zariski open subset in $U$ on which $\dim(Y_t)>N-3c$, but this is clear by corollary $\ref{pseudogeneric}$.
\end{proof}

%
%

\section{Jet differentials on complete intersection varieties}\label{sectionnonannulation}

This part is motivated by two theorems of S. Diverio (see \cite{Div08} and \cite{Div09}).
In \cite{Div09}, he proved a nonvaninshing theorem for jet differentials on hypersurfaces of high degree.

\begin{thm}\label{nonvanishing}
\emph{(\cite{Div09} Theorem 1)}
Let $X\subset \mathbb{P}^{n+1}$ a smooth projective hypersurface and $A$ an ample line bundle on $X$. Then there exists a positive integer $d$ such that 
$$H^0(X, E_{k,m}\Omega_X\otimes A^{-1})\neq 0$$
if $k\geqslant n$, $\deg(X)\geqslant d$ and $m$ large enough.
\end{thm}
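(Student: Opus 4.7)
The plan is to reduce the nonvanishing statement to a cohomology estimate on the Demailly--Semple tower $\pi_k : X_k \to X$, and then apply Demailly's algebraic holomorphic Morse inequalities. Recall that $X_k$ carries a tautological line bundle $\mathcal{O}_{X_k}(1)$ whose direct image satisfies $(\pi_k)_* \mathcal{O}_{X_k}(m) = E_{k,m}\Omega_X$ for $m$ in a suitable semigroup. Hence
$$H^0(X, E_{k,m}\Omega_X \otimes A^{-1}) = H^0\bigl(X_k,\, \mathcal{O}_{X_k}(m) \otimes \pi_k^* A^{-1}\bigr),$$
and it suffices to produce sections of a single line bundle on $X_k$.

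Since $\mathcal{O}_{X_k}(1)$ is not nef, I would first replace $\mathcal{O}_{X_k}(m)$ by a weighted combination $\mathcal{O}_{X_k}(a_1,\ldots,a_k)$ with rapidly decreasing weights $a_1 \gg \cdots \gg a_k$; following a construction of Demailly, such a combination can be written as a difference $F - G$ of nef classes on $X_k$. Applying the algebraic Morse inequalities then yields
$$h^0\bigl(X_k,\, m(F-G) \otimes \pi_k^* A^{-1}\bigr) \;\geq\; \frac{m^{D}}{D!} \bigl(F^{D} - D \cdot F^{D-1} \cdot G\bigr) + o(m^{D}),$$
where $D = \dim X_k = n + k(n-1)$, with the twist by $\pi_k^* A^{-1}$ contributing only to lower-order terms in $m$.

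The second step is to compute the Morse integrand $F^{D} - D \cdot F^{D-1}\cdot G$ as a polynomial in $d = \deg X$. The iterated projective-bundle structure of the Demailly tower reduces this to an integral on $X$ involving Segre classes of $\Omega_X$, which one can expand as a combinatorial sum over partitions $\lambda$ of Schur-class intersection numbers of the form $c_1(\mathcal{O}_X(1))^{n-|\lambda|} \cdot s_\lambda(\Omega_X)$. Using the conormal sequence of the hypersurface $X \subset \mathbb{P}^{n+1}$, each such number becomes an explicit polynomial in $d$ whose degree and leading coefficient can be read off.

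The heart of the argument, and the expected main obstacle, is showing that for $k \geq n$ the leading coefficient in $d$ of this Morse integrand is strictly positive. When $k < n$, the top-weight Schur contributions on $X_k$ vanish for dimensional reasons on $X$, so $k = n$ is exactly the threshold at which a maximal-weight Schur component survives in the top intersection. Once positivity of the $d$-leading coefficient is secured, one fixes $d$ large enough, depending on $k$, $n$, $A$, and the weight choice $(a_j)$, to dominate both the Morse correction $D \cdot F^{D-1} \cdot G$ and the contribution of $\pi_k^* A^{-1}$, yielding the desired nonvanishing for $m \gg 0$.
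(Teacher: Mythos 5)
Your proposal follows the same strategy as the paper's proof of its generalized nonvanishing result (Theorem~\ref{thmjet}): reduce to a single line bundle on the Demailly--Semple tower, write a weighted tautological bundle as a difference $F-G$ of nef classes, apply algebraic Morse inequalities, push the intersection numbers down to Segre-class integrals on $X$, and identify the threshold $k\geq n$ (here $\kappa=\lceil n/c\rceil$ with $c=1$) as the point where the degree-$N$ term in $d$ survives and has positive leading coefficient. The one cosmetic deviation is that you keep the $\pi_k^*A^{-1}$ twist outside $G$ and treat it as a lower-order perturbation, whereas the paper absorbs $\cO_X(-a)$ into $G$ before running Morse inequalities, but this does not change the argument.
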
 

He also proved a vanishing theorem for jet differentials on complete intersection varieties.

\begin{thm}\label{vanishing} 
\emph{(\cite{Div08} Theorem 7)}
Let $X$ be a complete intersection variety in $\mathbb{P}^N$ of dimension $n$ and codimension $c$.
For all $m\geqslant 1$ and $1\leqslant k < \lceil\frac{n}{c}\rceil$ one has
$$H^0(X,E_{k,m}\Omega_X)=0.$$ 
\end{thm}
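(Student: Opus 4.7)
The plan is to reduce the vanishing for $E_{k,m}\Omega_X$ to a vanishing for individual Schur powers of $\Omega_X$, and then invoke the classical Brückmann--Rackwitz theorem on complete intersections.

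First, I would recall Demailly's natural filtration of the invariant jet differential bundle. Decomposing $E_{k,m}\Omega_X$ under the action of the diagonal $\mathbb{C}^\ast$ that rescales the successive derivatives with weight equal to their order, one obtains a graded filtration whose associated graded is isomorphic to a direct sum
$$\mathrm{Gr}^\bullet E_{k,m}\Omega_X \;=\; \bigoplus_{\lambda} \Gamma^\lambda \Omega_X,$$
the sum running over partitions $\lambda$ of length $\ell(\lambda)\leq k$ satisfying a weighted condition $\sum i\lambda_i = m$. The crucial structural feature is that every $\lambda$ appearing has at most $k$ nonzero parts.

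Second, I would invoke (or reprove) the Brückmann--Rackwitz vanishing theorem: for a smooth complete intersection $X \subset \mathbb{P}^N$ of dimension $n$ and codimension $c$, one has $H^0(X, \Gamma^\lambda\Omega_X)=0$ whenever $\ell(\lambda) < \lceil n/c\rceil$. The argument rests on three ingredients: the conormal sequence $0 \to N^*_{X/\mathbb{P}^N}\to \Omega_{\mathbb{P}^N}|_X \to \Omega_X \to 0$ (with $N^*_{X/\mathbb{P}^N} = \bigoplus \mathcal{O}_X(-d_i)$), which induces a Schur-theoretic filtration of $\Gamma^\lambda\Omega_X$ by twists of $\Gamma^\mu\Omega_{\mathbb{P}^N}|_X$; Bott's formula for $H^\bullet(\mathbb{P}^N, \Omega^p_{\mathbb{P}^N}(t))$, applied via the Euler sequence to each Schur power $\Gamma^\mu\Omega_{\mathbb{P}^N}$; and the Koszul resolution of $\mathcal{O}_X$ on $\mathbb{P}^N$ to pass from cohomology on $\mathbb{P}^N$ to cohomology on $X$. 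The numerical hypothesis $\ell(\lambda)c < n$ is precisely what forces all potentially contributing $H^\bullet$ terms in the resulting double complex to vanish.

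Finally, since $k < \lceil n/c\rceil$, every partition $\lambda$ occurring in the filtration satisfies $\ell(\lambda)\leq k < \lceil n/c\rceil$, so by Brückmann--Rackwitz each graded piece has vanishing $H^0$. A descending induction along the filtration, using left-exactness of $H^0$, then yields $H^0(X, E_{k,m}\Omega_X) = 0$ as required. The main obstacle is the second step: carefully running the Bott--Koszul spectral sequence while keeping track of the Schur-theoretic bookkeeping, so that the sharp bound $\ell(\lambda)c < n$ emerges from the combinatorics of the partitions $\mu$ appearing in $\Gamma^\mu\Omega_{\mathbb{P}^N}$. Everything else in the argument is formal.
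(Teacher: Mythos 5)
The paper does not prove this statement; it quotes Theorem~7 of \cite{Div08} as a known result, so there is no in-paper proof to match. Your proposal reproduces the structure of Diverio's argument: filter $E_{k,m}\Omega_X$ into Schur powers of $\Omega_X$ with at most $k$ rows, and then kill $H^0$ of each graded piece using Br\"uckmann--Rackwitz. The numerical bookkeeping is correct, since $\ell(\lambda)\leq k<\lceil n/c\rceil$ is equivalent to $\ell(\lambda)c<n$, which in turn implies the Br\"uckmann--Rackwitz hypothesis $\sum_i\min(\lambda_i,c)<n$; and $m\geq 1$ excludes the trivial partition, so no graded piece reduces to $\mathcal{O}_X$.

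One small point of precision. The diagonal $\mathbb{C}^*$-action by reparametrization weights gives a genuine direct-sum decomposition of the larger Green--Griffiths bundle $E^{GG}_{k,m}\Omega_X$ into tensor products $S^{\ell_1}\Omega_X\otimes\cdots\otimes S^{\ell_k}\Omega_X$ with $\sum i\,\ell_i=m$; what it gives on the \emph{invariant} subbundle $E_{k,m}\Omega_X$ is only a filtration, and the identification of its graded pieces with irreducible $\Gamma^\lambda\Omega_X$, $\ell(\lambda)\leq k$, is a separate (and subtler) theorem of Demailly in \cite{Dem97}, not an immediate consequence of the torus action. You could bypass this entirely, as Diverio does, by using the inclusion $E_{k,m}\Omega_X\subseteq E^{GG}_{k,m}\Omega_X$ and applying Br\"uckmann--Rackwitz directly to the tensor products of at most $k$ symmetric powers, for which $kc<n$ again forces $H^0$ to vanish. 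Either route is sound; just attribute the Schur-theoretic filtration to Demailly rather than to the $\mathbb{C}^*$-weight decomposition.
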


It seems therefore natural to look for the nonvanishing of $H^0(X,E_{\lceil\frac{n}{c}\rceil,m}\Omega_X)$ when $X$ is a smooth complete intersection of dimension $n$ and codimension $c$ of high multidegree. This is the content of Theorem $\ref{thmjet}$.

\subsection{Segre Classes and Higher order jet spaces}
We start by giving the definition of the Segre classes associated to a vector bundle.
If $E$ is a rank $r$ complex vector bundle on $X$ and $p:\mathbb{P}(E)\to X$ the projection, then the Segre classes of $E$ are defined by $$s_i(E):=p_*c_1(\mathcal{O}_{\mathbb{P}(E)}(1))^{r-1+i}.$$ 
(Note that it is denoted $s_i(E^*)$ in $\cite{Ful98}$). It is straightforward to check that for any line bundle $L\to X$
\begin{equation}\label{segredroite}
s_i(E\otimes L)= \sum_{j=0}^{i} \binom{r-1+i}{i-j}s_j(E)c_1(L)^{i-j}.
\end{equation} 
 
Recall that the total Segre class is the formal inverse of the total Chern class of the dual bundle, $s(E)=c(E^*)^{-1}$. Therefore total Segre classes satisfy Whitney's formula for vector bundle exact sequences.

Now we briefly recall the construction of higher order jet spaces , details can be found in \cite{Dem97} and \cite{Mou10}, we will follow the presentation of $\cite{Mou10}$. Let $X\subset \mathbb{P}^N$ be a projective variety of dimension $n$, for all $k\in \mathbb{N} $ we can construct a variety $X_k$ and a rank $n$ vector bundle $\mathcal{F}_k$ on $X_k$. Inductively,  $X_0:=X$ and $\mathcal{F}_0:=\Omega_X$. Let $k\geqslant 0$ suppose that $X_k$ and $\mathcal{F}_k$ are constructed, then  $X_{k+1}:=\mathbb{P}(\mathcal{F}_{k})\overset{\pi_{k,k+1}}{\longrightarrow} X_k$ and $\mathcal{F}_{k+1}$ is the quotient of $\Omega_{X_{k+1}}$ defined by the following diagram.\\

$\begin{CD}
  @.            0                @.                 0          @.                                 @.          \\
@.            @VVV                                @VVV                          @.                          @.\\
  @.         S_{k+1}             @=              S_{k+1}       @.                                 @.          \\   
@.            @VVV                                @VVV                          @.                          @.\\ 
0 @>>>\pi^*_{k,k+1}\Omega_{X_k} @>>>     \Omega_{X_{k+1}}       @>>>       \Omega_{X_{k+1}/X_{k}}   @>>>    0\\  
@.            @VVV                                @VVV                          @|                          @.\\ 
0 @>>>\mathcal{O}_{X_{k+1}}(1)  @>>>    \mathcal{F}_{k+1}      @>>>       \Omega_{X_{k+1}/X_{k}}   @>>>     0 \\
@.            @VVV                                @VVV                          @.                          @.\\
  @.           0                @.                 0            @.                                 @.         \\ \\
\end{CD}$

For all $k > j \geqslant 0$ we will denote $\pi_{j,k}=\pi_{j,j+1}\circ \cdots \circ \pi_{k-1,k}: X_k\to X_j$, $\pi_k:=\pi_{0,k}$ and $E_{k,m}\Omega_X={\pi_k}_*\mathcal{O}_{X_k}(m)$. The bundles $E_{k,m}\Omega_X$ have important applications to hyperbolicity problems. 

Note that $n_k:=\dim(X_k)=n+k(n-1)$. 
If we have a k-uple of integers $(a_1,\cdots ,a_k)$ we will write 
$$\mathcal{O}_{X_k}(a_1,\cdots, a_k)=\pi_{1,k}^{*}\mathcal{O}_{X_1}(a_1)\otimes \cdots \otimes \mathcal{O}_{X_k}(a_k)$$


We define $s_{k,i}:=s_i({\mathcal{F}_k})$, $s_i:=s_i(\Omega_X)$, $u_k:=c_1(\mathcal{O}_{X_k}(1))$, $h_{\mathbb{P}^N}:=c_1(\mathcal{O}_{\mathbb{P}^N}(1))$, $h:={h_{\mathbb{P}^N}}_{|X}$ and $\mathcal{C}_k(X)=\mathbb{Z}\cdot u_k \oplus \cdots \oplus \mathbb{Z}\cdot u_1 \oplus \mathbb{Z}\cdot h \subset NS^1(X_k)$.
To ease our computations we will also adopt the following abuses of notations, if $k>j$ we will write $u_j$ the class on $X_k$ instead of $\pi^*_{j,k}u_j$ and similarly $s_{j,i}$ instead of $\pi^*_{j,k}s_{j,i}$. This should not lead to any confusion.

Now, from the horizontals exact sequences in the diagram, Whitney's formula and formula $\ref{segredroite}$ one easily derives (as in $\cite{Mou10}$)
the recursion formula
\begin{equation}\label{formulasegre}
s_{k,\ell}=\sum_{j=0}^{\ell}M_{\ell,j}^ns_{k-1,j}u_k^{\ell-j},
\end{equation}
where $M_{\ell,j}^n=\sum_{i=0}^{\ell-j}(-1)^i\binom{n-2+i+j}{i}$, in particular $M_{\ell,\ell}^n=1$.

\begin{lemma}\label{labase}
Let $k\geqslant 0$, $a\geqslant 0$, $\ell\geqslant 0$, take $\ell$ positive integers $i_1,\cdots, i_\ell$ and $m$ divisor classes $\gamma_1,\cdots, \gamma_m \in \mathcal{C}_k(X)$ such that $i_1+\cdots +i_\ell+m+a=n_k$. Denote $\gamma_q:=\alpha_{q,0}h+\sum_{i} \alpha_{q,i} u_i$. Then

$$ \int_{X_k}s_{k,i_1}\cdots s_{k,i_\ell} \gamma_1\cdots \gamma_m h^a =
\sum_{j_1,\cdots,j_{k+\ell},b} Q_{j_1,\cdots,j_{k+\ell},b}\int_X s_{j_1}\cdots s_{j_{k+\ell}} h^{a+b}
$$
Where in each term of the sum we have, $b\geqslant 0$ and the $Q_{j_1,\cdots,j_{k+\ell},b}$'s are polynomials in the $\alpha_{q,i}$'s whose coefficients are independant of $X$. Moreover up to reordering of the $j_p$'s one has $j_1\leqslant i_1,\cdots j_\ell\leqslant i_\ell$. 
\end{lemma}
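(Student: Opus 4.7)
The plan is to argue by induction on $k$, at each step pushing the integral from $X_k$ down to $X_{k-1}$ via $\pi_{k-1,k*}$ using the Segre recursion (\ref{formulasegre}) together with the standard projective bundle pushforward formula $\pi_{k-1,k*}u_k^{n-1+s} = s_{k-1,s}$ (valid since $\mathcal{F}_{k-1}$ has rank $n$). The base case $k=0$ is immediate: since $\mathcal{C}_0(X) = \mathbb{Z}\cdot h$, each $\gamma_q = \alpha_{q,0} h$, and the integral collapses to $\bigl(\prod_q \alpha_{q,0}\bigr) \int_X s_{i_1} \cdots s_{i_\ell} h^{a+m}$, which has the desired shape with $j_p = i_p$ and $b = m$.

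For the inductive step I would first apply (\ref{formulasegre}) to replace each factor $s_{k,i_p} = \sum_{j_p = 0}^{i_p} M^n_{i_p,j_p}\, s_{k-1,j_p}\, u_k^{i_p - j_p}$, and split each divisor class as $\gamma_q = \alpha_{q,k} u_k + \gamma'_q$ with $\gamma'_q \in \mathcal{C}_{k-1}(X)$ (using the paper's convention of identifying pulled-back classes with their preimages). Multiplying everything out expresses the original integral as a finite polynomial combination in the $\alpha_{q,i}$, with scalar coefficients independent of $X$, of integrals
$$\int_{X_k} s_{k-1,j_1} \cdots s_{k-1,j_\ell}\, u_k^E\, \delta_1 \cdots \delta_{m'}\, h^a$$
where $j_p \leq i_p$, the $\delta_q$'s lie in $\mathcal{C}_{k-1}(X)$, and $E \geq 0$.

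Pushing forward along $\pi_{k-1,k}$ and using the projection formula, each such integral becomes
$$\int_{X_{k-1}} s_{k-1,j_1} \cdots s_{k-1,j_\ell}\, s_{k-1,E-n+1}\, \delta_1 \cdots \delta_{m'}\, h^a$$
(with terms satisfying $E < n-1$ dropping out). Applying the induction hypothesis to this integral with parameters $(k-1, \ell+1)$ produces a sum of $Q \int_X s_{j'_1} \cdots s_{j'_{k+\ell}} h^{a+b}$ with $b \geq 0$: the total count of Segre classes comes out to $(k-1)+(\ell+1) = k+\ell$; the inequality $j'_p \leq j_p \leq i_p$ for $p \leq \ell$ is inherited from the recursion combined with the inductive bound; and the coefficients $Q$ remain polynomial in the $\alpha_{q,i}$ with scalar coefficients independent of $X$.

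The main obstacle is purely combinatorial bookkeeping: tracking the multi-index expansions from both the Segre recursion and the distributive expansion of $\gamma_1 \cdots \gamma_m$, verifying that the bounds $j_p \leq i_p$ propagate correctly through the composition of the recursion and the inductive step, and keeping the codimension counts balanced so that precisely the $E \geq n-1$ contributions survive the pushforward. No further geometric ingredient beyond (\ref{formulasegre}) and the standard projective bundle formula is needed.
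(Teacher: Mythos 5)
Your argument is correct and takes essentially the same approach as the paper: induction on $k$, using the Segre recursion \eqref{formulasegre} and the splitting of $\gamma_q$ into its $u_k$-component plus a class from $\mathcal{C}_{k-1}(X)$, expanding, pushing forward along $\pi_{k-1,k}$ via $\pi_{k-1,k*}u_k^{n-1+s}=s_{k-1,s}$, and then invoking the inductive hypothesis on $X_{k-1}$ with $\ell+1$ Segre factors. You spell out the bookkeeping (the dropping out of $E<n-1$ terms, the count $(k-1)+(\ell+1)=k+\ell$, the propagation of $j'_p\leq j_p\leq i_p$) in somewhat more detail than the paper's one-line "immediate induction," but the underlying reasoning is identical.
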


\begin{proof}
This is an immediate induction on $k$. The result is clear for $k=0$. Now suppose it is true for some $k>0$ and take $m$ divisors, $\gamma_1,\cdots, \gamma_m \in \mathcal{C}_{k+1}(X)$  on $X_{k+1}$. Let $\gamma_q:=\alpha_{q,0}h+\sum_{i} \alpha_{q,i} u_i$. Then using recursion formula \ref{formulasegre} and expanding, we get
\begin{eqnarray}
\int_{X_{k+1}}\!\!\!\!\!\!\!s_{k+1,i_1}\cdots s_{k+1,i_\ell}\gamma_1\cdots \gamma_m h^a\!\!\!\!\!&=&\!\!\!\!\! \sum P_{j_i,\cdots, j_\ell,b}^{p_1,\cdots, p_{k+1}}\!\!\!\int_{X_{k+1}}\!\!\!\!\!\!\!s_{k,j_1}\cdots s_{k,j_\ell}u_{k+1}^{p_{k+1}}\cdots u_1^{p_1}h^{a+b} \nonumber \\
&=&\!\!\!\!\! \sum P_{j_i,\cdots, j_\ell,b}^{p_1,\cdots, p_{k+1}}\!\!\!\int_{X_{k}}\!\!\!\!\!s_{k,j_1}\cdots s_{k,j_\ell} s_{k,r} u_{k}^{p_{k}}\cdots u_1^{p_1}h^{a+b} \nonumber
\end{eqnarray}
where in each term of the sum, $r=p_{k+1}-(n-1)$ and moreover, thanks to formula $\ref{formulasegre}$ one has $j_1\leqslant i_1,\cdots, j_\ell\leqslant i_\ell$. Note also that the $P_{j_i,\cdots, j_\ell,b}^{p_1,\cdots, p_{k+1}}$'s are polynomials in the $\alpha_{i,j}$'s but their coefficients do not depend on $X$. Now we can conclude by induction hypothesis.
\end{proof}


\subsection{Segre classes for complete intersections}\label{sectionsegre}

From now on, we will take $X=H_1\cap\cdots \cap H_c\subset \mathbb{P}^N$ a complete intersection of dimension $n$ and $H_i=(\sigma_i=0)$ with $\sigma_i \in H^0(\mathbb{P}^N,\mathcal{O}_{\mathbb{P}^N}(d_i))$ and $d_i \in \mathbb{N}$. Note that $n+c=N$. Let $\kappa:= \lceil\frac{n}{c}\rceil$ and take $b$ such that $n=(\kappa -1)c+b$, observe that $0<b\leqslant c$. To simplify some of our formulas, we will also define $\hat{\imath}:=i+n-1$ so that ${\pi_{k-1,k}}_*u_k^{\hat{\imath}}=s_{k-1,i}$.
Moreover as we will be interested in the asymptotic behavior of polynomials in $\mathbb{Z}[d_1,\cdots , d_c]$ we need some more notations. Let $P\in\mathbb{Z}[d_1,\cdots , d_c]$, $\deg{P}$ denotes the degree of the polynomial and $P^{dom}$ the homogenous part of $P$ of degreee $\deg P$ . We will write $P=o(d^k)$ if $\deg{P}<k$ and if $Q\in \mathbb{Z}[d_1,\cdots , d_c]$ is another polynomial we will write $P\sim Q$ if $P^{dom}=Q^{dom}$ and $P\gtrsim Q$ if $P^{dom}\geqslant Q^{dom}$.  Moreover some of our computations will take place in the chow ring $A^*(X)$ so we introduce some more notations. If $P$ is a polynomial in $\mathbb{Z}[d_1,\cdots , d_c, h]$ homogenous of degree $k$ in $h$ we will write $\tilde{P}$ for the unique polynomial in $\mathbb{Z}[d_1,\cdots , d_c]$ satisfying $P=\tilde{P}h^k$.
 
First we compute the Segre classes of the twisted cotangent bundle of a complete intersection in $\mathbb{P}^N$. Let $m\in \mathbb{Z}$. The twisted Euler exact sequence
\begin{eqnarray}
0 \to \mathcal{O}_{\mathbb{P}^N}(-m) \to \mathcal{O}_{\mathbb{P}^N}^{\oplus N+1}(1-m) \to T\mathbb{P}^N(-m) \to 0 \nonumber
\end{eqnarray}
yields:
\begin{eqnarray}
c(T\mathbb{P}^N(-m))=\frac{c(\mathcal{O}_{\mathbb{P}^N}^{\oplus N+1}(1-m))}{c(\mathcal{O}_{\mathbb{P}^N}(-m))}
=\frac{(1+(1-m)h_{\mathbb{P}^N})^{N+1}}{(1-mh_{\mathbb{P}^N})} \nonumber
\end{eqnarray}
The (twisted) normal bundle exact sequence 

\begin{eqnarray}
0 \to TX(-m) \to T\mathbb{P}^N_{|X}(-m) \to \bigoplus_{i=1}^c \mathcal{O}_{X}(d_i-m)\to 0 \nonumber
\end{eqnarray}
yields 
\begin{eqnarray}
c(TX(-m))=\frac{c(T\mathbb{P}^N_{|X}(-m))}{c(\bigoplus_{i=1}^c \mathcal{O}_{X}(d_i-m))}
=\frac{(1+(1-m)h)^{N+1}}{(1-mh)\prod_{i=1}^c(1+(d_i-m)h)} \nonumber
\end{eqnarray}
Therefore:
\begin{eqnarray}
s(\Omega_X(m))=(1\!-\!(1\!-\!m)h\!+\!(1\!-\!m)^2h^2\!-...)^{N+1}(1\!-\!mh)\prod_{i=1}^c(1\!+\!(d_i\!-\!m)h) \nonumber
\end{eqnarray}

Expanding the right hand side as a polynomial in $\mathbb{Z}[d_1,\cdots,d_c,h]$ we see that for $\ell \geq c$ we have $\deg(\tilde{s_\ell})=c$ and that for ${\ell}\leqslant c$ we have
\begin{eqnarray}\label{dominant}
s_{\ell}^{dom}(\Omega_X(m)) =\sum_{j_1<...<j_{\ell}} d_{j_1}...d_{j_{\ell}}h^{\ell}= c_{\ell}^{dom}(\bigoplus_{i=1}^c \mathcal{O}_{X}(d_i-m))  = c_{\ell}(\bigoplus_{i=1}^c \mathcal{O}_{X}(d_i)).
\end{eqnarray}

In particular, for $\ell \leqslant c$, we have $\deg(\tilde{s_\ell})=\ell$ and $\tilde{s_\ell}^{dom}>0$.
\begin{remark}
If $n\leqslant c$ we have that equality \ref{dominant} holds for all ${\ell}\in \mathbb{Z}$ (the case ${\ell}>n$ is obvious since both side of the equality vanish by a dimension argument).
\end{remark}

With this we can give some estimates for some intersection products on $X$.

\begin{lemma}\label{intersection}
\begin{enumerate}
\item Take $0\leqslant i_1\leqslant \cdots \leqslant i_k $, $\ell>0$ such that $i_1+\cdots +i_k+\ell=n$ then
$$\deg\left( \int_X s_{i_1}\cdots s_{i_k}h^{\ell} \right)<N$$ \label{intersection1}
\item Take $0\leqslant i_1\leqslant \cdots \leqslant i_k $ such that $i_1+\cdots +i_k=n$ then
 $\int_{X}s_{i_1}\cdots s_{i_k}$ is of degre $N$ if and only if $i_k\leqslant c$.

\item Take $0\leqslant i_1 \leqslant \cdots \leqslant i_{\kappa}$.
If $i_1< b$ or if $i_1 =b$ and $i_j< c$ for some $j>1$ then  $$\deg \left(\int_{X}s_{i_1}\cdots s_{i_{\kappa}}\right)<N.$$

\end{enumerate}
\end{lemma}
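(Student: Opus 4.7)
The plan is to convert every integral $\int_X s_{i_1}\cdots s_{i_k} h^\ell$ into a single polynomial in $\mathbb{Z}[d_1,\ldots,d_c]$ whose degree can be read off. Since $[X] = d_1\cdots d_c\, h^c$ inside $\mathbb{P}^N$, one has $\int_X h^n = d_1\cdots d_c$; combined with the factorization $s_{i_j} = \tilde{s}_{i_j}\, h^{i_j}$ and the fact that the total $h$-codimension is $n$, this gives
$$\int_X s_{i_1}\cdots s_{i_k} h^\ell \;=\; (d_1\cdots d_c)\, \tilde{s}_{i_1}\cdots \tilde{s}_{i_k}.$$

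Next, I would invoke the generating function $s(\Om_X) = \prod_i (1+d_i h)/(1+h)^{N+1}$ established in section \ref{sectionsegre}. Extracting the coefficient of $h^\ell$ shows that $\deg \tilde{s}_\ell = \min(\ell,c)$ and, crucially, that the top-degree part $\tilde{s}_\ell^{dom}$ is nonzero for every $\ell$: it is the elementary symmetric polynomial $e_\ell(d_1,\ldots,d_c)$ when $\ell \le c$, and a nonzero signed integer multiple of $d_1\cdots d_c$ when $\ell \ge c$. Because $\mathbb{Q}[d_1,\ldots,d_c]$ is an integral domain, the product $\tilde{s}_{i_1}^{dom}\cdots \tilde{s}_{i_k}^{dom}$ is again nonzero, and homogeneous of degree $\sum_j \min(i_j,c)$. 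Therefore
$$\deg \int_X s_{i_1}\cdots s_{i_k} h^\ell \;=\; c + \sum_{j=1}^{k} \min(i_j,c).$$
From here, (1) is immediate: $\sum_j i_j = n-\ell < n$ forces the degree to be at most $c + \sum_j i_j < N$. For (2), equality $\sum_j \min(i_j,c) = \sum_j i_j = n$ holds iff every $i_j \le c$, and by monotonicity this is equivalent to $i_k \le c$.

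Finally, statement (3) should fall out of (2) once one observes that both listed hypotheses force $i_\kappa > c$. If $i_1 < b$, then $i_2 + \cdots + i_\kappa = n - i_1 > (\kappa-1)c$, so these $\kappa-1$ terms cannot all be $\le c$ and the maximum $i_\kappa$ exceeds $c$. If instead $i_1 = b$ and $i_{j_0} < c$ for some $j_0 > 1$, then $i_2 + \cdots + i_\kappa = (\kappa-1)c$ has a term strictly below the average $c$, which forces another term, hence $i_\kappa$, to exceed $c$. In either case (2) delivers the strict inequality $\deg\int_X s_{i_1}\cdots s_{i_\kappa} < N$. The only calculational point worth being careful about is the non-cancellation of leading terms used in the degree formula above; this is what makes $\deg$ behave additively on the relevant products, but it is automatic from the explicit shape of each $\tilde{s}_\ell^{dom}$.
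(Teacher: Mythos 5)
Your proof is correct and follows essentially the same route as the paper: reduce to the degree of $\tilde{s}_{i_1}\cdots\tilde{s}_{i_k}$ via $\deg\int_X h^n = c$, use $\deg\tilde{s}_\ell = \min(\ell,c)$, and for part~3 deduce from $n=(\kappa-1)c+b$ that the hypotheses force $i_\kappa>c$ so part~2 applies. The only difference is that you spell out the non-cancellation of leading terms (via the explicit form of $\tilde{s}_\ell^{dom}$ and integrality of the polynomial ring) and the pigeonhole step in part~3, details the paper leaves implicit.
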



\begin{proof}
First note that for $0\leqslant {\ell} \leqslant n$, since $\deg \int_X h^n=c $,  
$$\deg\left( \int_X s_{i_1}\cdots s_{i_k}h^{\ell} \right)=\deg{\tilde{s}_{i_1}}+\cdots +\deg{\tilde{s}_{i_k}} +c.$$
Recall also that $\deg(\tilde{s_{i}})=i$ if $i\leqslant c$ and $\deg(\tilde{s_{i}})=c$ if $i > c$.
Now the first point is clear, let ${\ell} >0$
\begin{eqnarray}
\deg\left( \int_X s_{i_1}\cdots s_{i_k}h^{\ell} \right)&\leqslant & i_1 +\cdots + i_k +c <n+c=N \nonumber
\end{eqnarray}
To see the second point, if $i_{k}\leqslant c$ so are all the $i_j$ and therefore $$\deg \left(\int_{X} s_{i_1}\cdots s_{i_{k}}\right)=i_1 + \cdots +i_{k} +c = N.$$ Conversely if $i_{k}>c$ then $$\deg \left(\int_{X}s_{i_1}\cdots s_{i_{k}}\right)=i_1 + \cdots +i_{k-1}+c +c <N.$$
The last point is an easy consequence of the second one thanks to the equality $n=(\kappa -1)c +b$.
\end{proof}


\subsection{Nonvanishing for Jet Differentials}

We can now state and prove our non vanishing theorem which will show that Diverio's result (\cite{Div08} Theorem 7) is optimal. Recall that $\kappa$ denotes the ratio~$\lceil \frac{n}{c} \rceil$.

\begin{theorem}\label{thmjet}
Fix $a\geqslant 0$. There exists a constant $\Gamma_{N,n,a}$ such that, if for all $i$  $d_i\geqslant \Gamma_{N,n,a}$, then $\mathcal{O}_{X_\kappa}(1)\otimes \pi_\kappa^* \mathcal{O}_X(-a)$ is big on $X_{\kappa}$. In particular
$$H^0(X,E_{\kappa,m}\Omega_X(-ma))\neq 0$$
when $m\gg 0$.
\end{theorem}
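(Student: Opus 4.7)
The plan is to apply Demailly's algebraic holomorphic Morse inequalities on $X_\kappa$ (which has dimension $n_\kappa = n + \kappa(n-1)$) to deduce bigness of $L := \mathcal{O}_{X_\kappa}(1) \otimes \pi_\kappa^*\mathcal{O}_X(-a)$; once bigness is established, sections of $L^{\otimes m} = \mathcal{O}_{X_\kappa}(m)\otimes\pi_\kappa^*\mathcal{O}_X(-ma)$ push forward via $\pi_\kappa$ to nonzero sections of $E_{\kappa,m}\Omega_X\otimes \mathcal{O}_X(-ma)$ for $m\gg 0$, proving the ``in particular'' statement. To apply Morse, I would first write $L = F \otimes G^{-1}$ with $F$ and $G$ both nef. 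Using the classical nefness criterion for weighted Demailly jet line bundles, for suitably fast-decreasing positive integer weights $r_1\gg\cdots\gg r_\kappa$ and a sufficiently large integer $M$ (all independent of the $d_i$'s), one can take
$$F = \mathcal{O}_{X_\kappa}(r_1,\ldots,r_{\kappa-1},\, r_\kappa+1)\otimes\pi_\kappa^*\mathcal{O}_X(M-a), \qquad G = \mathcal{O}_{X_\kappa}(r_1,\ldots,r_{\kappa-1},\,r_\kappa)\otimes\pi_\kappa^*\mathcal{O}_X(M).$$
Bigness of $L$ then reduces to the numerical inequality $F^{n_\kappa} - n_\kappa\, F^{n_\kappa-1}\cdot G > 0$.

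Each of the two intersection numbers is expanded as a polynomial in the classes $u_1,\ldots,u_\kappa$ and $h$. Pushing down along the tower $X_\kappa\to X_{\kappa-1}\to\cdots\to X$ by iterated use of the Segre recursion (formula \ref{formulasegre}) and the identity $(\pi_{k-1,k})_*u_k^{n-1+j}=s_{k-1,j}$, Lemma \ref{labase} rewrites these as $\mathbb{Z}[d_1,\ldots,d_c]$-linear combinations of integrals of the form $\int_X s_{j_1}\cdots s_{j_\kappa}\cdot h^e$, whose coefficients depend only on $r_i, M, a, N, n$ and $\kappa$.

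The next step is to isolate the dominant monomial in $d_1,\ldots,d_c$. Lemma \ref{intersection}(1) rules out any contribution with a nontrivial factor of $h$, and using the decomposition $n=(\kappa-1)c+b$, part (3) of the same lemma forces the surviving degree-$N\kappa$ contribution to come uniquely from $\int_X s_b\cdot s_c^{\kappa-1}$; by formula \ref{dominant}, the leading monomial of this integral in $d_1,\ldots,d_c$ equals $e_b(d_1,\ldots,d_c)\cdot(d_1\cdots d_c)^{\kappa}$, which is strictly positive. Assuming the combinatorial coefficient of $\int_X s_b\cdot s_c^{\kappa-1}$ in the full expression $F^{n_\kappa} - n_\kappa\, F^{n_\kappa-1}\cdot G$ is itself strictly positive, the Morse inequality holds as soon as every $d_i$ exceeds some explicit threshold $\Gamma_{N,n,a}$, and $L$ is big.

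The main obstacle is this last positivity check: although the extraction via Lemmas \ref{labase} and \ref{intersection} is essentially mechanical, one must calibrate the weights $r_1,\ldots,r_\kappa$ and $M$ so that the $F^{n_\kappa}$ contribution at the leading monomial $\int_X s_b \cdot s_c^{\kappa-1}$ genuinely dominates the subtracted $n_\kappa\, F^{n_\kappa-1}\cdot G$ contribution. Balancing the ratios $r_i/r_{i+1}$ against the dimensions $n_\kappa$ and $n-1$ to ensure this — in the spirit of Diverio's analysis of the hypersurface case \cite{Div09}, of which this is the codimension-$c$ generalization — is the real crux.
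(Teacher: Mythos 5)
Your framework is right — Morse inequalities on $X_\kappa$, pushdown via Lemma \ref{labase} and the Segre recursion, isolation of the dominant term $\int_X s_b s_c^{\kappa-1}$ via Lemma \ref{intersection}, and the asymptotic in the $d_i$'s — and you have correctly located the difficulty. But what you call ``the real crux'' is precisely the step the paper resolves with two specific devices that are absent from your write-up, and as stated your argument has a genuine gap.

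First, your decomposition $L = F \otimes G^{-1}$ with $G = \mathcal{O}_{X_\kappa}(r_1,\ldots,r_\kappa)\otimes\pi_\kappa^*\mathcal{O}_X(M)$ makes things needlessly hard: since $c_1(G)$ has $u_i$-components, the subtracted term $n_\kappa F^{n_\kappa-1}\cdot G$ is \emph{not} automatically subdominant in the $d_i$'s, so you would indeed have to compare leading coefficients — a calibration you flag but do not carry out, and for which there is no guarantee of success with your weights. The paper instead first reduces, via the effective divisors $D_k$ with $\pi_{k-1,k}^*\mathcal{O}_{X_{k-1}}(1)=\mathcal{O}_{X_k}(1)\otimes\mathcal{O}_{X_k}(-D_k)$, to proving bigness of a class of the form $\mathcal{O}_{X_\kappa}(a_1,\ldots,a_\kappa)\otimes\pi_\kappa^*\mathcal{O}_X(-a)$; this allows taking $F=L_\kappa\otimes\cdots\otimes L_1$ (nef) and $G=\pi_\kappa^*\mathcal{O}_X(m+a)$, a \emph{pullback from $X$}. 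Then $c_1(G)$ is a multiple of $h$, so by Lemma \ref{technical}.1 the entire subtracted term is $o(d^N)$ with no calibration at all.

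Second, you ``assume the combinatorial coefficient of $\int_X s_b s_c^{\kappa-1}$ in the full expression is strictly positive.'' This is exactly what cannot be assumed: when you expand $F^{n_\kappa}$ and push down, other monomials in the $\ell_i$ could in principle also produce degree-$N$ contributions of either sign, and tracking the net coefficient is intractable without a structural trick. The paper's trick is that since each $\ell_i$ is nef, \emph{every} term in the multinomial expansion of $(\ell_\kappa+\cdots+\ell_1)^{n_\kappa}$ is a nonnegative intersection number, so one may simply drop all terms but one and conclude
$$\int_{X_\kappa}(\ell_\kappa+\cdots+\ell_1)^{n_\kappa} \;\geqslant\; \int_{X_\kappa}\ell_\kappa^{\hat b}\,\ell_{\kappa-1}^{\hat c}\cdots\ell_1^{\hat c},$$
and then the descent of Lemma \ref{technical}.2--3 identifies the right side with $\int_X s_b s_c^{\kappa-1}+o(d^N)$, which is positive of degree $N$ by Lemma \ref{intersection}.2. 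This single inequality replaces the entire ``calibration of weights'' you worry about; without it (or some equivalent mechanism) your proof does not close.
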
 


\begin{remark}
It will be clear during the proof that all computations could be made explicit, at least in small dimensions, so that the $\Gamma_{N,n,a}$ in the theorem is, in theory at least, effective. We will give an explicit bound in the case $\kappa=1$ in section \ref{effectifsymmetric} . 
\end{remark}


\begin{remark}
It is also natural to ask if under the hypothesis of the theorem, $\mathcal{O}_{X_k}(1)\otimes \pi_k^* \mathcal{O}_X(-a)$ is big if $k\geqslant \kappa$. In fact this is true and the proof is exactly the same.  
\end{remark}

As in $\cite{Div09}$ we introduce a nef line bundle $L_k\in \mathcal{C}_k(X)$ for all $k\geqslant 1$.

$$L_k:=\mathcal{O}_{X_k}(2\cdot 3^{k-2},\cdots , 2\cdot 3,2,1)\otimes\pi^*
\mathcal{O}_X(2\cdot 3^{k-1})$$
We can write its first chern class
$${\ell}_k:=c_1(L_k)=u_k +\beta_k$$
where $\beta_k$ is a class that comes from $X_{k-1}$.
Now we can state the main technical lemma. This is just the combination of
lemmas $\ref{labase}$ and $\ref{intersection}$.


\begin{lemma}\label{technical}
With the above notations we have the following estimates.
\begin{enumerate}
\item Take $k\geqslant 1$ and $\gamma_1,\cdots ,\gamma_{n_k-1} \in \mathcal{C}_{k}(X)$. Then 
$$\int_{X_k} \gamma_1 \cdots \gamma_m h=o(d^N).$$
\item Take $\gamma_1,\cdots ,\gamma_p \in \mathcal{C}_{k}(X)$ and $0\leqslant i_1 \leqslant \cdots \leqslant i_{q}$ such that $p+\sum i_j=n_k$. If $i_1< b$ or if $i_1 =b$ and $i_j< c$ for some $j>1$ then we have the following estimates, 
\begin{eqnarray}
\int_{X_k}s_{k,i_1}\cdots s_{k,i_q}\gamma_1\cdots \gamma_p&=&o(d^N)\label{estim1}\\
\int_{X_{k}}s_{k-1,i_1}\cdots s_{k-1,i_q}\gamma_1\cdots \gamma_p&=&o(d^N)\label{estim}
\end{eqnarray}
\item Take $0<k<\kappa$. Then
$$\int_{X_k} s_{k,b}s_{k,c}^{\kappa-k-1}{\ell}_{k}^{\hat{c}}\cdots {\ell}_1^{\hat{c}}= \int_{X_{k-1}} s_{k-1,b}s_{k-1,c}^{\kappa-k}{\ell}_{k-1}^{\hat{c}}\cdots {\ell}_1^{\hat{c}}+o(d^N).$$

\end{enumerate}
\end{lemma}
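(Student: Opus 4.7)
The three statements share a common reduction. By Lemma \ref{labase}, any integral on $X_k$ of the form $\int_{X_k} s_{k, i_1} \cdots s_{k, i_\ell} \gamma_1 \cdots \gamma_m h^a$ rewrites as a finite sum of expressions $\int_X s_{j_1} \cdots s_{j_{k+\ell}} h^{a+b'}$ with polynomial coefficients in the $\alpha_{q, i}$'s that do not depend on $X$, and with $j_p \leq i_p$ (after reordering) for $p \leq \ell$. The plan in each part is to apply this reduction and then control the $d$-degree of each resulting integral via Lemma \ref{intersection}.

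Part 1 is the shortest: with $a = 1$ and no Segre factors on $X_k$, every term in the reduced sum carries an $h$-power of at least $1$, so Lemma \ref{intersection}(1) gives $o(d^N)$ directly. For part 2, applying Lemma \ref{labase} with $a = 0$ yields terms $\int_X s_{j_1} \cdots s_{j_{k+q}} h^{b'}$; those with $b' > 0$ are again $o(d^N)$ by \ref{intersection}(1), and those with $b' = 0$ must be examined using the inequalities $j_p \leq i_p$ for $p \leq q$ together with the hypothesis that $i_1 < b$ or ($i_1 = b$ and some $i_j < c$ for $j > 1$). This hypothesis forces the $k+q$ Segre factors summing to $n$ on $X$ to include at least one index below the threshold required for the maximal dominant configuration, and the combinatorial estimate underlying \ref{intersection}(3) then yields $d$-degree $< N$. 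Estimate (\ref{estim}) is identical, since $s_{k-1, i_j}$ pulled back to $X_k$ is absorbed by the same reduction scheme.

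For part 3, the plan is to expand each factor on $X_k$ using the recursion formula \ref{formulasegre} --- writing $s_{k, b} = s_{k-1, b} + \sum_{j<b} M_{b,j}^n s_{k-1, j} u_k^{b-j}$ and similarly for $s_{k, c}$ --- and to expand $\ell_k^{\hat{c}} = (u_k + \beta_k)^{\hat{c}}$ binomially, with $\beta_k$ a class pulled back from $X_{k-1}$. Pushing forward via $\pi_{k-1, k}$ and using $(\pi_{k-1, k})_* u_k^{\hat{i}} = s_{k-1, i}$, a single distinguished term emerges: the one coming from the leading pieces $s_{k-1, b}$, $s_{k-1, c}^{\kappa - k - 1}$ together with the pure $u_k^{\hat{c}}$ component of $\ell_k^{\hat{c}}$. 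This term evaluates to exactly $\int_{X_{k-1}} s_{k-1, b} s_{k-1, c}^{\kappa - k} \ell_{k-1}^{\hat{c}} \cdots \ell_1^{\hat{c}}$. Every other term in the expansion either has $u_k$-power below $n - 1$ and pushes forward to zero, or produces some Segre index strictly below $b$ or $c$ (hence falls under part 2 of the present lemma), or absorbs extra $\beta_k$-components as new divisor classes in $\mathcal{C}_{k-1}(X)$ (falling under part 2, or under part 1 when the $h$-component of $\beta_k$ is used).

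The main obstacle is precisely the bookkeeping in part 3: one must enumerate the terms of the expansion cleanly and verify that each non-dominant contribution fits in the scope of part 1 or part 2. Since $\beta_k$ mixes $h$ with all $u_j$ for $j < k$, this requires a careful case analysis, but once set up the reduction to the tools already proved is essentially mechanical.
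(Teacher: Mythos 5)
Your proposal follows the paper's proof essentially verbatim: part 1 and the first estimate of part 2 reduce via Lemma \ref{labase} to integrals on $X$ and invoke the degree bounds of Lemma \ref{intersection}; estimate (\ref{estim}) is obtained by pushing the stray $u_k$-powers down to $X_{k-1}$ (generating an extra Segre factor) and citing (\ref{estim1}); and part 3 expands $s_{k,b}$, $s_{k,c}$ by the recursion and $\ell_k^{\hat c}=(u_k+\beta_k)^{\hat c}$ binomially, isolates the dominant term $s_{k-1,b}s_{k-1,c}^{\kappa-k}\ell_{k-1}^{\hat c}\cdots\ell_1^{\hat c}$ after pushforward, and discards the rest via part 2. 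This is the paper's argument, sketched at a slightly higher level; no new idea and no gap relative to what the paper itself does.
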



\begin{proof}

1) is an immediate consequence of Lemma $\ref{intersection}.1$ and Lemma $\ref{labase}$.
Similarly for 2), thanks to Lemma $\ref{labase}$ we write
$$\int_{X_k}s_{k,i_1}\cdots s_{k,i_{q}}\gamma_1\cdots \gamma_p= \sum_{j_1,\cdots,j_{k+q}} Q_{j_1,\cdots,j_{k+q}}\int_X s_{j_1}\cdots s_{j_{k+q}} h^{a}$$

where $a\geqslant 0$ and moreover we know that in each term of this sum either $j_1<b$ or $j_p< c$ for some $p>0$, thus we can apply Lemma $\ref{intersection}.1$ (if $a>0$) or $\ref{intersection}.2$ (if $a=0$). From this one can easily deduce formula \ref{estim}, write $\gamma_i=a_iu_i+\beta_i$ where $\beta_i\in \mathcal{C}_{k-1}(X)$. 
\begin{eqnarray}
\int_{X_{k}}s_{k-1,i_1}\cdots \!\!\!\! &s_{k-1,i_q}&\!\!\!\!\gamma_1\cdots \gamma_p = \int_{X_{k}}s_{k-1,i_1}\cdots s_{k-1,i_q}(a_1u_1+\beta_1)\cdots (a_1u_1+\beta_p)\nonumber \\
&=&\!\!\!\!\!\!\!\!\!\! \sum_{I\subseteq{\{1,\cdots,p \}}}{\left( \prod_{i\in I}{a_i} \right)}\int_{X_{k}}s_{k-1,i_1}\cdots s_{k-1,i_q}u_k^{|I|}\prod_{i\notin I}{\beta_i}\nonumber \\
&=&\!\!\!\!\!\!\!\!\!\! \sum_{I\subseteq{\{1,\cdots,p \}}}{\left( \prod_{i\in I}{a_i} \right)}\int_{X_{k-1}}\!\!\!\!\!s_{k-1,i_1}\cdots s_{k-1,i_q}\cdot s_{k-1,|I|-n+1}\prod_{i\notin I}{\beta_i}\nonumber
\end{eqnarray}
Then we conclude by applying formula $\ref{estim1}$.

To see 3), 
\begin{eqnarray}
\int_{X_k} \!\!\!s_{k,b}s_{k,c}^{\kappa - k -1} {\ell}_{k}^{\hat{c}}\cdots {\ell}_{1}^{\hat{c}}\!\!\!\!&=&\!\!\!\! \int_{X_k}\!\! \left(\sum_{i=0}^{b}M_{b,i}^{n} s_{k-1,i}u_{k}^{b-i} \right)\!\! \left( \sum_{i=0}^{c}M_{c,i}^{n} s_{k-1,i}u_{k}^{c-i} \right)^{\kappa-k-1}\!\!\!\!\!\!\!\!\!\!\!\!\!\!\!\!\! {\ell}_{k}^{\hat{c}}\cdots {\ell}_{1}^{\hat{c}}\nonumber \\ 
                 &=& \!\!\!\! \int_{X_k}  s_{k-1,b}s_{k-1,c}^{\kappa - k -1} {\ell}_{k}^{\hat{c}}\cdots {\ell}_{1}^{\hat{c}}+o(d^N)\nonumber
\end{eqnarray}
By expanding and using formula \ref{estim} in each term of the obtained sum. Now,
\begin{eqnarray}
\int_{X_k}  s_{k-1,b}s_{k-1,c}^{\kappa - k -1} {\ell}_{k}^{\hat{c}}\cdots {\ell}_{1}^{\hat{c}}+o(d^N) \!\!\!\!&=&\!\!\!\! \int_{X_k} \!\!\!\! s_{k-1,b}s_{k-1,c}^{\kappa - k -1} (u_{k}+\beta_k)^{\hat{c}}{\ell}_{k-1}^{\hat{c}}\cdots {\ell}_{1}^{\hat{c}}\nonumber \\
                 &=&\!\!\!\! \int_{X_k} \!\!\!\! s_{k-1,b}s_{k-1,c}^{\kappa - k -1}  \sum_{i=0}^{\hat{c}}\binom{\hat{c}}{i}u_k^{\hat{c}-i}\beta_k^i {\ell}_{k-1}^{\hat{c}}\cdots {\ell}_{1}^{\hat{c}}\nonumber \\
                 &=&\!\!\!\! \int_{X_{k-1}} \!\!\!\!\!\!\! s_{k-1,b}s_{k-1,c}^{\kappa - k -1}  \sum_{i=0}^{c}\binom{\hat{c}}{i}s_{k-1,i}\beta_k^i {\ell}_{k-1}^{\hat{c}}\cdots {\ell}_{1}^{\hat{c}}\nonumber \\
                 &=&\!\!\!\! \int_{X_{k-1}}  \!\!\!\!\!\!\!s_{k-1,b}s_{k-1,c}^{\kappa - k -1} s_{k-1,c}{\ell}_{k-1}^{\hat{c}}\cdots {\ell}_{1}^{\hat{c}} +o(d^N)\nonumber \\
                 &=&\!\!\!\! \int_{X_{k-1}}  \!\!\!\!\!\!\!s_{k-1,b}s_{k-1,c}^{\kappa - (k-1) -1} {\ell}_{k-1}^{\hat{c}}\cdots {\ell}_{1}^{\hat{c}} +o(d^N)\nonumber
\end{eqnarray}

\end{proof}


Recall also the following consequence of holomorphic Morse inequalities (see $\cite{Laz04}$).


\begin{theorem}
Let $Y$ be a smooth projective variety of dimension $n$ and let $F$ and $G$ be two nef divisors on $Y$. If $F^n>nG\cdot F^{n-1}$ then $F-G$ is big.
\end{theorem}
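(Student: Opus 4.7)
The plan is to derive this bigness criterion from Demailly's holomorphic Morse inequalities in their algebraic form, which provide a lower bound on the volume of a difference of two nef classes. Concretely, the key input I would invoke is the estimate
$$\mathrm{vol}(F-G)\;\geq\; F^n - n\,F^{n-1}\cdot G,$$
valid for any pair of nef $\mathbb{R}$-divisor classes on a smooth projective variety $Y$ of dimension $n$; this appears, for instance, as Theorem~2.2.15 in Lazarsfeld's \emph{Positivity in Algebraic Geometry} and in Demailly's notes on $L^2$ methods. This volume inequality is the only non-formal ingredient needed.

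Granting the estimate, the deduction is immediate. The hypothesis $F^n > n\, F^{n-1}\cdot G$ forces $\mathrm{vol}(F-G)>0$, and by the very definition of volume a line bundle $L$ satisfies
$$\mathrm{vol}(L)\;=\;\limsup_{m\to\infty}\frac{n!}{m^n}\,h^0\!\bigl(Y,\mathcal{O}_Y(mL)\bigr),$$
so $\mathrm{vol}(L)>0$ is equivalent to $L$ being big. Hence $F-G$ is big, which is the conclusion of the theorem.

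The substance therefore lies entirely in the volume inequality, which is the hard step. Its proof proceeds by first reducing to the case where $F$ and $G$ lie in the interior of the nef cone: replacing $F$ and $G$ by $F+\varepsilon H$ and $G+\varepsilon H$ for an ample divisor $H$ preserves the strict inequality $F^n>n F^{n-1}\cdot G$ for $\varepsilon$ small by continuity of intersection numbers, while lower semicontinuity of the volume function recovers the nef case in the limit $\varepsilon\to 0$. In the ample case, one equips $F$ and $G$ with smooth hermitian metrics of semipositive curvature and applies Demailly's pointwise holomorphic Morse inequality to $m(F-G)$ endowed with the induced tensor-product metric; integrating the inequality over the subset where the curvature of the combined metric is positive definite yields precisely the bound $\mathrm{vol}(F-G)\geq F^n-nF^{n-1}\cdot G$. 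This transcendental estimate is the real obstacle, and for the purposes of the paper it is cited as a black box from Demailly's theory rather than reproved.
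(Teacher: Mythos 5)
Your proposal is correct and matches the paper's treatment: the paper simply recalls this statement as a known consequence of Demailly's holomorphic Morse inequalities, citing Lazarsfeld's \emph{Positivity in Algebraic Geometry}, exactly the source and volume lower bound $\mathrm{vol}(F-G)\geq F^n-nF^{n-1}\cdot G$ you invoke. The additional sketch you give of how that inequality is established goes beyond what the paper does, but the deduction of bigness is the same.
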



We are now ready to prove Theorem $\ref{thmjet}$. 


\begin{proof}
First we recall an argument from $\cite{Div09}$ to show that we just have to check that $\mathcal{O}_{X_k}(a_1,\cdots, a_k)\otimes \pi_k^*\mathcal{O}_X(-a)$ is big for some suitable $a_i$'s. We know (see $\cite{Dem97}$) that $D_k:=\mathbb{P}(\Omega_{X_{k-1}/X_{k-2}})\subset X_k$ is an effective divisor that satisfies the relation $\pi_{k-1,k}^*\mathcal{O}_{X_{k-1}}(1)=\mathcal{O}_{X_k}(1)\otimes \mathcal{O}_{X_k}(-D_k)$. From this, an immediate induction shows that for any $k>1$ and any $k$-uple $(a_1,\cdots,a_k)$ we have 
$$\mathcal{O}_{X_k}(b_{k+1})=\mathcal{O}_{X_k}(a_1,\cdots, a_k)\otimes\pi_{2,k}^*\mathcal{O}_{X_2}(b_1D_2)\otimes\cdots \otimes\mathcal{O}_{X_k}(b_{k-1}D_k).$$
Where for all $j>0$, $b_j:=a_1+\cdots +a_{j}$. Thus when $0\leqslant b_j$ for all $0\leqslant j\leqslant k$ then $\pi_{2,k}^*\mathcal{O}_{X_2}(b_1D_2)\otimes\cdots  \otimes\mathcal{O}_{X_k}(b_{k-1}D_k)$ is effective, this means that, under this condition, to prove that $\mathcal{O}_{X_k}(1)\otimes \pi_k^*\mathcal{O}_X(-a)$ is big it is sufficient to show that $\mathcal{O}_{X_k}(a_1,\cdots, a_k)\otimes \pi_k^*\mathcal{O}_X(-a)$ is big.

 Let $D=F-G$ where, as in \cite{Mou10}, we set $F:=L_{\kappa} \otimes \cdots \otimes L_1$, and $G=\pi_{\kappa}^*(\mathcal{O}_X(m+a))$ where $m\geqslant 0$ is chosen so that $F\otimes\pi_{\kappa}\mathcal{O}_X(-m)$ has no component coming from $X$. It is therefore sufficent to show that $D$ is big. To do so, we will apply holomorphic Morse inequalities to $F$ and $G$ (both nef).
We need to prove that 
$$F^{n_{\kappa}}>n_{\kappa}F^{n_{\kappa}-1}\cdot G.$$
Clearly, the right hand side has degree strictly less than $N$ in the $d_i$'s thanks to Lemma $\ref{technical}$
and therefore we just have to show that the left hand side is larger than a positive polynomial of degree $N$ in the $d_i$'s. Let $\alpha := c_1(\pi_{\kappa}^*(\mathcal{O}_X(a))$

\begin{eqnarray}
F^{n_{\kappa}}&=& \int_{X_\kappa}({\ell}_{\kappa}+\cdots +{\ell}_1-\alpha)^{n_{\kappa}} \nonumber \\
				 &=& \int_{X_\kappa}\sum_{i=0}^{n_{\kappa}}(-1)^i\binom{n_{\kappa}}{i} ({\ell}_{\kappa}+\cdots +{\ell}_1)^{n_{\kappa}-i}\alpha^i \nonumber \\
                 &=& \int_{X_\kappa}({\ell}_{\kappa}+\cdots +{\ell}_1)^{n_{\kappa}}+o(d^N)\nonumber
\end{eqnarray}
by applying Lemma \ref{technical}. But since all the $\ell_i$'s are nef,
\begin{eqnarray}
\int_{X_\kappa}({\ell}_{\kappa}+\cdots +{\ell}_1)^{n_{\kappa}}&\geqslant & \int_{X_\kappa} {\ell}_{\kappa}^{\hat{b}} \cdot {\ell}_{\kappa-1}^{\hat{c}}\cdots {\ell}_1^{\hat{c}}\nonumber \\
                 &=& \int_{X_{\kappa -1}}\!\!\!s_{\kappa -1,b} \cdot {\ell}_{\kappa-1}^{\hat{c}}\cdots {\ell}_1^{\hat{c}}+o(d^N)\nonumber
\end{eqnarray}
The last inequality is obtained by using Lemma \ref{technical}.2. Now an immediate induction proves that for all $k<\kappa$ one has
$$F^{n_{\kappa}} \geqslant \int_{X_k} s_{k,b}s_{k,c}^{\kappa-k-1} l_{k}^{\hat{c}}\cdots l_{1}^{\hat{c}}+o(d^N).$$

We just proved the case $k = \kappa -1$ and the other part of the induction is exactly the content of Lemma $\ref{technical}.3$.
Therefore,
$$F^{n_{\kappa}} \geqslant \int_{X}  s_bs_c^{\kappa-1}+o(d^N)$$
and we conclude applying Lemma $\ref{intersection}.2$
\end{proof}


%
%

\section{Positivity of the Cotangent Bundle}\label{positivitecotangent}

From now on we will focus on the positivity of the cotangent bundle. the organisation is the following. 
In \ref{Omega2} we give a geometric interpretation of the ampleness of $\Omega_X\otimes \cO_{\bP^N_{\vert X}}(2)$.
In \ref{numericalpositivity} we prove that all positivity conditions on the Chern classes that one might expect are indeed satisfied, this proves the numerical side of the conjecture.
Then in \ref{mainresult} we state and prove our main result, admitting, temporarily,  a global generation property whose proof is delayed to section  \ref{globallygenerated}. Then, in \ref{effectifsymmetric}, we will give an effective bound on the degree of the hypersurfaces we are intersecting to ensure the existence of global twisted symmetric differential forms on our complete intersection variety.  



\subsection{Ampleness of $\Omega_X(2)$}\label{Omega2}

The first remark to make is that the cotangent bundle to an abelian variety is nef, but the cotangent bundle to $\bP^N$ isn't, this is the reason why Debarre's Conjecture should be more difficult to solve than the analoguous statement in an abelian variety. But we know that $\Om_X(2)$ is nef since it is globally generated, therefore it seems a natural question to see for which variety this bundle is ample. It turns out that this has a neat geometric interpretation, which is that $\Om_X(2)$ is ample if and only if there is no line in $X$, as we shall now see.\\
Fix a $N+1$-dimensional complex vector space $V$, denote $\mathbb{P}^N=\mathbb{P}(V^*)$ the projectivized space of lines in $V$, $p : V \to \mathbb{P}(V^*)$ the projection, and $Gr(2,V)=Gr(2,\mathbb{P}^N)$ the space of vector planes in $V$ which is also the space of lines in $\mathbb{P}^N$. We will also consider the projection $\pi : \mathbb{P}(\Omega_{\mathbb{P}^N})\to \mathbb{P}^N$. 
The key point is the following lemma.


\begin{lemma}\label{Han}
There is a map $\varphi: \mathbb{P}(\Omega_{\mathbb{P}^N})\to \mathbb{P}(\Lambda^2 V^*)$ such that $\varphi^* \mathcal{O}_{\mathbb{P}(\Lambda^2 V^*)}(1)=\mathcal{O}_{\Omega_{\mathbb{P}^N}}(1)\otimes \pi^*\mathcal{O}_{\mathbb{P}^N}(2)$. Moreover this application factors through the Pl\"{u}cker embedding $Gr(2,\mathbb{P}^N)=Gr(2,V)\hookrightarrow \mathbb{P}(\Lambda^2 V^*)$. More precisely, an element $(x,[\xi])\in \mathbb{P}(\Omega_{\mathbb{P}^N})$ with $x\in\mathbb{P}^N$ and $\xi \in T_x \mathbb{P}^N$, gets mapped to the unique line $\Delta$ in $\mathbb{P}^N$ satisfying $\xi\in T_x\Delta \subseteq T_x\mathbb{P}^N$. 
\end{lemma}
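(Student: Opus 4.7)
The plan is to realize $\varphi$ as the morphism induced by a single canonical surjection of vector bundles coming from the Euler sequence, and then to extract from it both the pullback formula for $\mathcal{O}(1)$ and the geometric description by a pointwise computation.

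First, I would start from the dualized Euler exact sequence
$$0 \to \Omega_{\mathbb{P}^N}(1) \to V^* \otimes \mathcal{O}_{\mathbb{P}^N} \to \mathcal{O}_{\mathbb{P}^N}(1) \to 0.$$
Since the quotient is a line bundle, the standard formula for the second exterior power of such a sequence gives
$$0 \to \Lambda^2 \Omega_{\mathbb{P}^N}(1) \to \Lambda^2 V^* \otimes \mathcal{O}_{\mathbb{P}^N} \to \Omega_{\mathbb{P}^N}(2) \to 0,$$
i.e., a canonical $GL(V)$-equivariant surjection $\Lambda^2 V^* \otimes \mathcal{O}_{\mathbb{P}^N} \twoheadrightarrow \Omega_{\mathbb{P}^N}(2)$. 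Equivalently, $\Omega_{\mathbb{P}^N}(2)$ is globally generated and $H^0(\mathbb{P}^N, \Omega_{\mathbb{P}^N}(2)) \cong \Lambda^2 V^*$ canonically as $GL(V)$-representations.

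Using the identification $\mathbb{P}(\Omega_{\mathbb{P}^N}) = \mathbb{P}(\Omega_{\mathbb{P}^N}(2))$ together with $\mathcal{O}_{\mathbb{P}(\Omega(2))}(1) = \mathcal{O}_{\mathbb{P}(\Omega)}(1) \otimes \pi^*\mathcal{O}_{\mathbb{P}^N}(2)$, the surjection above embeds $\mathbb{P}(\Omega(2))$ into the trivial projective bundle $\mathbb{P}^N \times \mathbb{P}(\Lambda^2 V^*)$, and composing with projection onto the second factor yields the desired morphism $\varphi\colon \mathbb{P}(\Omega_{\mathbb{P}^N}) \to \mathbb{P}(\Lambda^2 V^*)$. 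By construction, $\varphi^*\mathcal{O}_{\mathbb{P}(\Lambda^2 V^*)}(1)$ is the restriction of the tautological $\mathcal{O}(1)$ of the ambient trivial projective bundle, namely $\mathcal{O}_{\mathbb{P}(\Omega(2))}(1) = \mathcal{O}_{\mathbb{P}(\Omega)}(1) \otimes \pi^*\mathcal{O}_{\mathbb{P}^N}(2)$, as claimed.

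For the factorization through the Plücker embedding and for the geometric description, I would carry out a fiberwise computation. At $x = [L] \in \mathbb{P}^N$ with $L \subset V$ the corresponding line, the identification $T_x\mathbb{P}^N \cong \mathrm{Hom}(L, V/L)$ lets one represent any non-zero tangent direction $[\xi]$ by a pair $(v,w)$, with $v \in L\setminus\{0\}$ and $w \in V$ a lift of $\xi(v) \in V/L$. Unwinding the surjection $\Lambda^2 V^* \otimes \mathcal{O}_{\mathbb{P}^N} \twoheadrightarrow \Omega_{\mathbb{P}^N}(2)$ on the fiber at $x$ shows that $\varphi(x, [\xi])$ is the line in $\mathbb{P}(\Lambda^2 V^*)$ represented by the decomposable tensor $v \wedge w$. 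This automatically factors through the Plücker embedding of $Gr(2, V)$, and the corresponding $2$-plane $\langle v, w\rangle \subset V$ is precisely the unique line $\Delta \subset \mathbb{P}^N$ through $x$ with tangent direction $\xi$.

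The main obstacle I expect is careful bookkeeping of conventions: the interplay between $\mathbb{P}(\Omega)$ and $\mathbb{P}(\Omega(2))$, the resulting twist by $\mathcal{O}(2)$ in the pullback of $\mathcal{O}(1)$, and in particular the verification that the fiberwise image is a decomposable $2$-vector (which is what forces the factorization through $Gr(2,V)$ rather than landing merely in $\mathbb{P}(\Lambda^2 V^*)$). Once those are pinned down, the proof reduces to elementary linear algebra on the fibers of the Euler sequence.
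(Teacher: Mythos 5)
Your proof is correct. The core idea is the same as the paper's (a canonical surjection $\Lambda^2 V^* \otimes \mathcal{O}_{\mathbb{P}^N} \twoheadrightarrow \Omega_{\mathbb{P}^N}(2)$ induced from the Euler sequence), but you derive it by a cleaner route. The paper applies $\Lambda^{N-1}$ to the Euler sequence and then invokes the dualities $\Lambda^{N-1}V \cong \Lambda^2 V^*$ and $\Lambda^{N-1}T\mathbb{P}^N \cong \Omega_{\mathbb{P}^N}(N+1)$; the first identification is only canonical up to a choice of volume form on $V$ (which is harmless projectively, but a blemish). You instead take $\Lambda^2$ of the dualized Euler sequence and exploit that the quotient $\mathcal{O}_{\mathbb{P}^N}(1)$ is a line bundle, so the exterior square filtration has only two graded pieces; this is shorter and fully $GL(V)$-equivariant with no auxiliary choices. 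For the geometric identification the paper fixes bases and backtracks through the chain of linear maps, whereas you use the invariant description $T_x\mathbb{P}^N \cong \mathrm{Hom}(L, V/L)$ at $x=[L]$, compute that the fiberwise image is $[v\wedge w]$ for $v\in L$ a generator and $w$ a lift of $\xi(v)\in V/L$, and from decomposability read off simultaneously the factorization through $Gr(2,V)$ and the identification of the image with the line $\Delta = \mathbb{P}(\langle v,w\rangle)$. The one point you flag as a potential obstacle, that the image must be decomposable, is in fact automatic from your fiberwise formula, so no separate verification is needed. Net effect: same construction, tighter and more invariant presentation.
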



\begin{proof} 
Take the Euler exact sequence 
$$0 \to \mathcal{O}_{\mathbb{P}^N} \to V\otimes \mathcal{O}_{\mathbb {P}^N}(1)\to T\mathbb{P}^N \to 0$$ 
and apply $\Lambda^{N-1}$ to it in order to get the quotient
$$\Lambda^{N-1}V\otimes\mathcal{O}_{\mathbb{P}^N}(N-1)\to \Lambda^{N-1}T \mathbb{P}^N \to 0.$$
Now using the well known dualities, $\Lambda^{N-1}V=\Lambda^2V^*$ and $\Lambda^{N-1}T \mathbb{P}^N=\Omega_{\mathbb{P}^N}\otimes K_{\mathbb{P}^N}^*=\Omega_{\mathbb{P}^N}\otimes\mathcal{O}_{\mathbb{P}^N}(N+1)$, and tensoring everything by $\mathcal{O}_{\mathbb{P}^N}(1-N)$ we get 
$$\Lambda^2V^*\to \Omega_{\mathbb{P}^N}\otimes \mathcal{O}_{\mathbb{P}^N}(2)\to 0.$$
This yields the map $\varphi : {\mathbb{P}}(\Omega_{\mathbb{P}^N})=\mathbb{P}(\Omega_{\mathbb{P}^N}(2))\hookrightarrow {\mathbb{P}^N}\times \mathbb{P}(\Lambda^2V^*)\to \mathbb{P}(\Lambda^2V^*)$ such that $\varphi^*\mathcal{O}_{\mathbb{P}(\Lambda^2 V^2)}(1)=\mathcal{O}_{\Omega_{\mathbb{P}^N}(2)}(1)=\mathcal{O}_{\Omega_{\mathbb{P}^N}}(1)\otimes \pi^*\mathcal{O}_{\mathbb{P}^N}(2)$.

To see the geometric interpretation of this map, it suffices to backtrack through the previous maps. Take a point $x\in \mathbb{P}^N$ and a vector $0 \neq \xi \in T_x \mathbb{P}^N$ and fix a basis $(\xi_0,\cdots, \xi_{N-1})$ of $T_x\mathbb{P}^N$ such that $\xi_0 = \xi$. Now take $v\in V$ such that $p(v)=x$ and a basis $(e_0,\cdots, e_N)$ of $T_vV= V$ such that $d_vp(e_N)=0$ and $d_vp(e_i)=\xi_i$ for $i<N$. We just have to check that $(x,\left[\xi\right])$ is mapped to the annouced line $\Delta$ which, with our notations, corresponds to the point $\left[e_0\wedge e_N\right]\in \mathbb{P}(\Lambda^2V^*)$.\\ 

$\begin{array}{ccccccc}
\mathbb{P}(\Omega_{\mathbb{P}^N,x}) &\to& \mathbb{P}(\Lambda^{N-1}T_x\mathbb{P}^N) &\to& \mathbb{P}(\Lambda^{N-1}V) &\to& \mathbb{P}(\Lambda^2V^*)\\
\left[ \xi_0 \right]  &\mapsto& \left[ \xi_1^*\wedge \cdots \wedge \xi_{N-1}^* \right] &\mapsto& \left[ e_1^*\wedge \cdots \wedge e_{N-1}^* \right] &\mapsto& \left[ e_0\wedge e_N \right]
\end{array}$
\end{proof}


With this we can prove our proposition,


\begin{e-proposition}\label{droite}
Let $X\subseteq \mathbb{P}^N$ be a smooth variety then $\Omega_X(2)$ is ample if and only if $X$ doesn't contain any line.
\end{e-proposition} 


\begin{proof}
By Lemma~$\ref{Han}$, we know that $\Omega_X(2)$ is ample if and only if the restriction of $\varphi$, $\varphi_{X}: \mathbb{P}(\Omega_X)\subseteq \mathbb{P}(\Omega_{\mathbb{P}^N})\to Gr(2,\mathbb{P}^N)$ is finite.

Now, if $X$ contains a line $\Delta$ then $\varphi_X$ is not finite since the curve $\mathbb{P}(K_{\Delta})~\subseteq~\mathbb{P}(\Omega_X)$ gets mapped to the point in $Gr(2,\mathbb{P}^N)$ representing $\Delta$.

If $\varphi_{X}$ is not finite then there is a curve $C\subseteq \mathbb{P}(\Omega_X)$ which gets mapped to a point in $Gr(2,\mathbb{P}^N)$ corresponding to a line $\Delta$ in $\mathbb{P}^N$. Let $\Gamma=\pi(C)$, Lemma~$\ref{Han}$ tells us that the embedded tangent space $\mathbb{T}_x\Gamma$ equals $\Delta$ for all $x\in \Gamma$ and therefore  $\Delta\subseteq X$.   
\end{proof}


Note that by a dimension argument on the incidence variety of lines on the universal hypersurface one can see that a generic hypersurface $H$ of high degree contains no line and therefore $\Omega_H\otimes \mathcal{O}_{\mathbb{P}^N}(2)$ is ample. 



\subsection{Numerical positivity of the cotangent bundle}\label{numericalpositivity}

As an other evidence towards this conjecture we will now check that all the positivity conditions on the chern classes that one might expect (according to a theorem of Fulton and Lazarsfeld \cite{F-L83}) are indeed satisfied. We start by a preliminary section on numerical positivity before turning to the proof of our result.

\subsubsection{Numerical positivity}
Following Fulton $\cite{Ful98}$ we recall definitions concerning Schur polynomials.
Let $c_1, c_2, c_3,...$ be a sequence of formal variables.
Let $\ell$ be a positive integer and $\lambda=(\lambda_1,...,\lambda_{\ell})$ be a partition of ${\ell}$. We define the Schur polynomial associated to $c=(c_i)_{{i\in \mathbb{N}}}$ and $\lambda$ to be:
\begin{eqnarray}
\Delta_{\lambda}(c):= \det \left[(c_{\lambda_i+j-i})_{1\leqslant i,j\leqslant {\ell}}\right]\nonumber
\end{eqnarray}
For exemple, $\Delta_{1}(c)=c_1$, $\Delta_{2,0}(c)=c_2$ and $\Delta_{(1,1)}(c)=c_1^2-c_2$.

Now consider two sequences of formal variables, $c_1, c_2, c_3,...$ and $s_1, s_2, s_3,...$ satisfying the relation:
\begin{eqnarray}
(1+c_1 t + c_2 t^2+ ...)\cdot(1-s_1 t +s_2 t^2 - ...)=1 \label{chernsegre}
\end{eqnarray}

Note that relation (\ref{chernsegre}) is satisfied when $c_i=c_i(E)$ are the Chern classes of a vector bundle $E$ over a variety $X$ and $s_i=s_i(E)$ are its Segre classes.

The proof of the following crucial combinatorial result can be found in $\cite{Ful98}$.


\begin{lem}\label{lemmecombi}
\emph{(\cite{Ful98})}
With the same notations.
Let $\bar{\lambda}$ be the conjugate partition of $\lambda$, then $\Delta_{\lambda}(c)=\Delta_{\bar{\lambda}}(s)$.
\end{lem}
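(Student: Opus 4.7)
The plan is to reduce the identity to classical Jacobi--Trudi formulas for Schur functions by introducing formal ``Chern roots.'' Concretely, I would write
\[
1 + c_1 t + c_2 t^2 + \cdots \;=\; \prod_{j}(1 + \alpha_j t)
\]
in a ring of formal symmetric polynomials in variables $\alpha_1,\alpha_2,\dots$, so that $c_i = e_i(\alpha)$, the $i$-th elementary symmetric polynomial. Plugging this into relation (\ref{chernsegre}) gives
\[
\sum_{i\geq 0} s_i t^i \;=\; \frac{1}{\prod_j (1-\alpha_j t)} \;=\; \sum_{i\geq 0} h_i(\alpha)\, t^i,
\]
so $s_i = h_i(\alpha)$, the $i$-th complete homogeneous symmetric polynomial. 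Since the lemma is a universal identity in the variables $c_i$ (equivalently $s_i$), it suffices to prove it after this substitution.

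Once this is done, I would invoke the two classical forms of the Jacobi--Trudi identity applied to the symmetric polynomial $s_{\bar\lambda}(\alpha)$: the primary form expresses it as $s_{\bar\lambda} = \det\bigl(h_{\bar\lambda_i + j - i}\bigr)$ and the dual (N\"agelsbach--Kostka) form expresses it as $s_{\bar\lambda} = \det\bigl(e_{\lambda_i + j - i}\bigr)$, using that $\bar{\bar\lambda} = \lambda$. Under the substitutions $c_i = e_i(\alpha)$, $s_i = h_i(\alpha)$ these are precisely
\[
\Delta_{\bar\lambda}(s) \;=\; s_{\bar\lambda}(\alpha) \;=\; \Delta_\lambda(c),
\]
which is the desired equality.

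The only real content is therefore the Jacobi--Trudi identities. The cleanest direct route, which avoids any external machinery, is to exploit the fact that relation (\ref{chernsegre}) says exactly that the infinite lower-triangular matrices
\[
A = (c_{j-i})_{i,j\geq 0}, \qquad B = \bigl((-1)^{j-i} s_{j-i}\bigr)_{i,j\geq 0}
\]
are mutually inverse (with the convention $c_k=s_k=0$ for $k<0$ and $c_0=s_0=1$). Jacobi's classical formula identifying any minor of an invertible matrix with the corresponding complementary minor of the inverse, together with a sign bookkeeping that exchanges a partition and its conjugate via the ``French vs.\ English'' reading of the Young diagram, converts the determinant $\det(c_{\lambda_i+j-i})$ directly into $\det(s_{\bar\lambda_i+j-i})$.

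The main obstacle is the combinatorial bookkeeping in this last step (matching indices and signs between the $\lambda$-minor of $A$ and the $\bar\lambda$-minor of $B$), but this is entirely standard and is carried out in detail in Fulton's book \cite{Ful98}, which the lemma statement already cites; I would simply refer the reader there rather than reproduce the argument.
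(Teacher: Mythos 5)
The paper does not give its own proof of this lemma; it defers entirely to Fulton's book, so there is no internal proof to compare against. Your argument is correct, and it is essentially the standard one. The reduction via formal Chern roots --- $c_i \mapsto e_i(\alpha)$, $s_i \mapsto h_i(\alpha)$ --- is legitimate because this assignment defines an isomorphism from $\mathbb{Z}[c_1,c_2,\dots]$ onto the ring of symmetric functions in infinitely many variables; hence a universal polynomial identity in the $c_i$ (with the $s_i$ determined by relation (\ref{chernsegre})) holds if and only if it holds after the substitution. One small imprecision: plugging $\sum c_i t^i = \prod_j(1+\alpha_j t)$ into (\ref{chernsegre}) first gives $\sum (-1)^i s_i t^i = \prod_j(1+\alpha_j t)^{-1}$, and only after the change of variable $t \mapsto -t$ do you arrive at $\sum s_i t^i = \prod_j(1-\alpha_j t)^{-1} = \sum h_i(\alpha)t^i$; this does not affect the conclusion $s_i = h_i(\alpha)$. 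From there, the Jacobi--Trudi and N\"agelsbach--Kostka dual determinantal formulas for $s_{\bar\lambda}(\alpha)$, combined with $\bar{\bar\lambda}=\lambda$, give $\Delta_{\bar\lambda}(s) = s_{\bar\lambda}(\alpha) = \Delta_\lambda(c)$ exactly as you say. Your alternative route through Jacobi's complementary-minor identity for the mutually inverse unitriangular matrices $(c_{j-i})$ and $((-1)^{j-i}s_{j-i})$ is in fact the argument Fulton gives; it is more self-contained (no symmetric-function machinery), at the cost of the sign and index bookkeeping you flag and reasonably defer to the cited reference. Either route is a complete and correct justification.
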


Let $E$ be a vector bundle of rank $r$  over a projective variety $X$ of dimension~$n$. 

\begin{e-definition}
We will say that $E$ is numerically positive if for any subvariety $Y \subseteq X $ and for any partition $\lambda $ of $ {\ell}=dim(Y) $ one has $\int_{Y}\Delta_{\lambda}(c(E))>0$.
\end{e-definition}

This definition is motivated by a theorem of Fulton and Lazarsfeld $\cite{F-L83}$ which gives numerical consequences of ampleness.
\begin{thm}\emph{(\cite{F-L83})}
If $E$ is ample then $E$ is numerically positive. Moreover the Schur polynomials are exactly the relevant polynomials to test ampleness numerically.
\end{thm}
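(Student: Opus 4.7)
The theorem bundles two statements: every Schur polynomial is numerically positive on an ample bundle, and conversely the Schur polynomials span (with nonnegative coefficients) the cone of all weighted-homogeneous polynomials in Chern classes that are numerically nonnegative on every ample bundle. I would attack the two directions separately, beginning with the positivity half since that is what will actually be invoked in the sequel.

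The plan for the positivity direction is to reduce to the Bloch--Gieseker theorem: for an ample vector bundle $F$ of rank $r$ on an irreducible projective variety $Z$ of dimension at least $r$, the top Chern class $c_r(F)$ pairs strictly positively with every nonzero effective class of complementary dimension. Given a subvariety $Y \subseteq X$ of dimension $\ell$ and a partition $\lambda = (\lambda_1 \geq \cdots \geq \lambda_\ell)$ of $\ell$, I would form the tower of Grassmannian bundles $\pi : \mathrm{Fl}_\lambda(E) \to Y$ parametrizing flags of quotients of $E|_Y$ of the ranks prescribed by $\lambda$. By the Kempf--Laksov formula (the bundle incarnation of the Jacobi--Trudi identity), the Schur class $\Delta_\lambda(c(E))$ is then the proper pushforward along $\pi$ of a product of top Chern classes of the successive universal quotients on $\mathrm{Fl}_\lambda(E)$. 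Each such quotient is a quotient of $\pi^*E$ and hence ample, so Bloch--Gieseker applies fibrewise to yield strict positivity of the integrand against the preimage of any effective test cycle on $Y$; strict positivity is preserved under $\pi_*$.

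The main technical obstacle is Bloch--Gieseker itself, which I would establish by the standard covering trick: given an ample line bundle $L$ on a variety $X$ and an integer $N$, construct a finite surjection $p : X' \to X$ together with a line bundle $M$ on $X'$ satisfying $p^*L \cong M^{\otimes N}$. Combined with the splitting principle on complete flag bundles of $E$, this converts assertions about top Chern classes of ample bundles of arbitrary rank into positivity of intersection numbers of tensor powers of very ample line bundles against effective cycles, which is classical. For the converse direction, the strategy is to produce universal test configurations: on products of projective spaces and Grassmannians one can write down explicit ample vector bundles whose Chern data can be prescribed sufficiently freely to separate the Schur basis, so that a hypothetical numerically positive polynomial with a negative coefficient in the Schur expansion yields a contradiction when evaluated on a well chosen such test bundle.
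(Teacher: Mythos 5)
This statement is quoted in the paper as a result of Fulton and Lazarsfeld, cited directly from \cite{F-L83}; the paper gives no proof of it, so there is no in-paper argument against which to compare your sketch. What you have written is a compressed but accurate outline of the actual Fulton--Lazarsfeld strategy. For the positivity direction, they indeed realize $\Delta_\lambda(c(E))$ as the Gysin pushforward of a top Chern class of an ample quotient bundle on a flag (their ``cone'') bundle --- the point you invoke via Kempf--Laksov --- and then reduce top-Chern positivity to the Bloch--Gieseker covering trick, exactly as you describe. One small caution: in their presentation $\Delta_\lambda$ is realized as $\pi_* c_{\mathrm{top}}(Q_\lambda)$ for a \emph{single} quotient bundle $Q_\lambda$ rather than as a product of top Chern classes of several universal quotients, so if you carried this out you would want to match the Jacobi--Trudi determinant to the correct Gysin formula. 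For the converse, Fulton--Lazarsfeld argue via a self-duality property of the cone generated by the $\Delta_\lambda$'s and the existence of separating test bundles on Grassmannians; your ``universal test configurations'' paragraph is in the same spirit, though as written it elides the cone-duality step that actually pins down the extremal rays. As a review: the architecture is right and matches the cited reference; since the paper merely invokes the theorem, a full proof along these lines would be out of proportion, but if you did expand it the two points above are where the details need care.
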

Note that the converse is false, for example the bundle $\mathcal{O}_{\mathbb{P}^1}(2)\oplus \mathcal{O}_{\mathbb{P}^1}(-1)$ over $\mathbb{P}^1$ is numerically positive but not ample (the problem being the lack of subvarieties of dimension two to test $c_2$). See $\cite{Ful76}$ for a more interesting example.

\subsubsection{Numerical Positivity of the Cotangent Bundle } 

We can now prove that the cotangent bundle to a complete intersection variety is numerically positive if the codimension is greater than the dimension and if the multidegree is big enough. We will use the notations of section \ref{sectionsegre}.

\begin{theorem}\label{numpos}
Fix $a\in\mathbb{Z}$.  There exists $D_{N,n,a}\in \mathbb{N}$ such that if $X\subset \bP^N$ is a complete intersection of dimension $n$, of codimension $c$ and multidegree $(d_1,\cdots, d_c)$ such that $c\geqslant n$ and $d_i > D_{N,n,a}$ for all $i$ then $\Omega_X(-a)$ is numerically positive.
\end{theorem}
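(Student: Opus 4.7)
The plan is to reduce the statement to the positivity of a single polynomial in $d_1,\ldots,d_c$, and then to identify its dominant term as a Schur polynomial evaluated on the $d_i$'s, which is manifestly positive.

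By definition of numerical positivity combined with Lemma~\ref{lemmecombi}, I need to show that for every irreducible subvariety $Y\subseteq X$ of dimension $\ell\leq n$ and every partition $\lambda$ of $\ell$, one has $\int_Y \Delta_{\bar\lambda}(s(\Omega_X(-a))) > 0$, where $\bar\lambda$ is the conjugate partition. The key structural remark is that the Segre classes of $\Omega_X(-a)$ are polynomial expressions in $h$ (computed from the twisted normal bundle sequence exactly as in section~\ref{sectionsegre}), so the class $\alpha:=\Delta_{\bar\lambda}(s(\Omega_X(-a)))$ can be written uniquely as $\alpha=\tilde\alpha\cdot h^{\ell}$ in $A^{\ell}(X)$ with $\tilde\alpha\in\mathbb{Z}[d_1,\ldots,d_c]$. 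Restricting to $Y$ we get
$$\int_Y\alpha=\tilde\alpha\int_Y h^{\ell}=\tilde\alpha\cdot\deg(Y),$$
so since $\deg(Y)>0$, the whole statement reduces to showing $\tilde\alpha>0$ for $d_i$ large enough, uniformly over the finitely many partitions $\lambda$ of integers $\ell\leq n$.

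To analyze $\tilde\alpha$, write $s_i(\Omega_X(-a))=S_ih^i$ with $S_i\in\mathbb{Z}[d_1,\ldots,d_c]$. The computation preceding formula~(\ref{dominant}), applied with $m=-a$, shows that for $0\leq i\leq c$ the polynomial $S_i$ has degree $i$ with dominant term equal to the elementary symmetric polynomial $e_i(d_1,\ldots,d_c)$. Now $\tilde\alpha=\det[S_{\bar\lambda_i+j-i}]$; the maximum index appearing in this determinant is $\bar\lambda_1+\mathrm{length}(\bar\lambda)-1=\mathrm{length}(\lambda)+\lambda_1-1$, which for a partition of $\ell$ is at most $\ell$. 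Under the hypothesis $c\geq n\geq\ell$, every index therefore lies in $[0,c]$, so each entry has its dominant term given by an elementary symmetric polynomial. Expanding the determinant over permutations $\sigma$, every product has total $d$-degree bounded by $\sum_i(\bar\lambda_i+\sigma(i)-i)=\ell$ with equality, and the dual Jacobi--Trudi identity gives
$$\tilde\alpha^{\mathrm{dom}}=\det\bigl[e_{\bar\lambda_i+j-i}(d_1,\ldots,d_c)\bigr]=s_\lambda(d_1,\ldots,d_c),$$
the Schur polynomial indexed by $\lambda$.

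Since $\lambda$ has at most $\ell\leq c$ parts, $s_\lambda(d_1,\ldots,d_c)$ is a nonzero sum of monomials with positive integer coefficients (one for each semistandard Young tableau), hence strictly positive whenever all $d_i>0$. Consequently, for the $d_i$'s sufficiently large, $\tilde\alpha>0$, and taking the maximum of the required bound over the finite set of partitions of integers $\leq n$ yields the uniform constant $D_{N,n,a}$. The main obstacle is the combinatorial step: one must check that under the hypothesis $c\geq n$ all indices appearing in the Jacobi--Trudi determinant remain within $[0,c]$ (using the elementary partition inequality $\lambda_1+\mathrm{length}(\lambda)-1\leq|\lambda|$), and then verify that the dominant term of the determinant really is the determinant of the dominant terms, so that the reduction to a Schur polynomial goes through.
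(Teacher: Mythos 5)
Your proof is correct and follows the same strategy as the paper's: reduce to the positivity of the polynomial $\tilde\Delta_{\bar\lambda}(s(\Omega_X(-a)))\in\mathbb{Z}[d_1,\ldots,d_c]$ for each partition $\lambda$ of $\ell\leq n$, then identify the dominant term of that polynomial with the corresponding Schur expression for the rank-$c$ split bundle $\bigoplus_i\cO_X(d_i)$ using formula~(\ref{dominant}). There are two small but genuine differences worth noting. First, you justify explicitly that every index $\bar\lambda_i+j-i$ occurring in the determinant lies in $[0,c]$, via the hook-length bound $\lambda_1+\mathrm{length}(\lambda)-1\leq|\lambda|\leq n\leq c$; the paper uses this implicitly but never states it, so your version is tighter at precisely the point that the hypothesis $c\geq n$ enters. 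Second, to conclude the strict positivity of $\det[e_{\bar\lambda_i+j-i}(d)]=s_\lambda(d_1,\ldots,d_c)$ you invoke the combinatorial SSYT expansion (monomial-positivity of Schur polynomials with $\leq c$ rows), whereas the paper invokes the Fulton--Lazarsfeld theorem applied to the ample bundle $\bigoplus_i\cO_X(d_i)$; for a split bundle these are the same fact, and your route is the more elementary one, avoiding the need to integrate over an auxiliary $\ell$-dimensional subvariety to extract a numerical inequality from Fulton--Lazarsfeld.
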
   


\begin{proof}
By Lemma \ref{lemmecombi} we have to check that for any subvariety $Y\subseteq X$ of dimension $\ell\leqslant n\leqslant c$ and for any partition $\lambda$ of $\ell$ one has $\int_Y \Delta_{\bar{\lambda}}(s(\Omega_X(-a)))>0$. Moreover, $\int_Y \Delta_{\bar{\lambda}}(s(\Omega_X(-a)))= \tilde{\Delta}_{\bar{\lambda}}(s(\Omega_X(-a)))\int_Yh^{\ell}$, thus we just have to check that $\tilde{\Delta}_{\bar{\lambda}}(s(\Omega_X(-a)))>0$ when the $d_i$'s are large enough, which is equivalent to $\tilde{\Delta}^{dom}_{\bar{\lambda}}(s(\Omega_X(-a)))>0$. Now the equality 
\begin{eqnarray}
 \tilde{\Delta}_{\bar{\lambda}}^{dom}(s(\Omega_X(-a)))=\det (\tilde{s}_{\lambda_i+j-i}^{dom}(\Omega_X(-a)))_{1\leqslant i,j\leqslant \ell}\label{equa}
\end{eqnarray}
holds if one can prove that the right hand side is non zero.
But by (\ref{dominant}) we find :
\begin{eqnarray}
\det (\tilde{s}_{\lambda_i+j-i}^{dom}(\Omega_X(-a)))_{1\leqslant i,j\leqslant l}&=&\det \left(\tilde{c}_{\lambda_i+j-i}\left(\bigoplus_{j=1}^k\mathcal{O}(d_j)\right)\right)_{1\leqslant i,j\leqslant l}\nonumber \\
&=&\tilde{\Delta}_{\bar{\lambda}}\left(c\left(\bigoplus_{j=1}^k\mathcal{O}(d_j)\right)\right)\nonumber
\end{eqnarray}
By applying the theorem of Fulton and Lazarsfeld to $\bigoplus_{j=1}^k\mathcal{O}_X(d_j)$ (which is ample if $d_i>0$) we find that this is strictly positive. 
This yields equality in \ref{equa}, and we get the desired result.
\end{proof}

\begin{remark}
Note that there is no assumption made on the genericity of our complete intersection.
\end{remark}



\subsection{Main Result}\label{mainresult}

We will now state our main results.

\begin{theorem}\label{main}
Fix $a\in\bN$. There exists $\delta_{N,n,a} \in \bN$ such that, if $X\subset \bP^N$ is a generic complete intersection of dimension $n$, codimension $c$ and multidegree $(d_1,\cdots, d_c)$ satisfying $c\geq n$ and $d_i\geq \delta_N$ for all $1\leq i \leq c$, then $\cO_{\bP(\Om_X)}(1)\otimes\pi_{\Om_X}^{*}\cO_X(-a)$ is big and there exists a subset $Y\subset X$ of codimension at least two such that,
$$\pi_{\Om_X}(\Bs(\cO_{\bP(\Om_X)}(1)\otimes\pi_{\Om_X}^{*}\cO_X(-a))) \subseteq Y.$$

\end{theorem}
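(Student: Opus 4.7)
Since $c \geq n$, we have $\kappa = \lceil n/c\rceil = 1$, so Theorem \ref{thmjet} already gives us the bigness half: for $d_i$ larger than some $\Gamma_{N,n,a}$, the line bundle $\cO_{\bP(\Om_X)}(1)\otimes \pi_{\Om_X}^{*}\cO_X(-a)$ is big, and consequently $H^0(X, S^m\Om_X\otimes \cO_X(-ma))\neq 0$ for $m\gg 0$. So all that remains is to show that the base locus of this linear system, pushed down to $X$, is contained in a subset of codimension $\geq 2$. The plan is to adapt, in the $k=1$ setting, the strategy of Diverio--Merker--Rousseau: use the parameter space of complete intersections, lift sections, produce many meromorphic vector fields with controlled pole order on the universal family, and differentiate.

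Concretely, I would work over the parameter space $S \subset \prod_i \bP^{N_{d_i}}$ of smooth complete intersections of multidegree $(d_1,\dots,d_c)$ and consider the universal family $p:\cX\to S$ together with the relative projectivized cotangent bundle $\bP(\Om_{\cX/S})\to \cX$. The first key input, to be supplied by section \ref{globallygenerated}, is that the twisted relative tangent bundle of $\bP(\Om_{\cX/S})$ (twisted by an explicit power of $\cO_{\bP^N}$) is generated by its global sections away from a bad locus $\Sigma$ which projects to a codimension $\geq 2$ subset of $X_t$ for every $t\in S$ outside a proper Zariski closed subset. Granting this, the argument proceeds as in \cite{DMR10}: given a section $\sigma \in H^0(X_t, S^m\Om_{X_t}\otimes \cO_{X_t}(-ma))$ vanishing at a point $(x,[\xi])\in \bP(\Om_{X_t})\setminus \Sigma$, I would lift $\sigma$ (at least locally in $S$) to a section of the relevant bundle on a neighborhood in $\bP(\Om_{\cX/S})$ and successively differentiate it along the globally generating vector fields. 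Each differentiation costs only a bounded twist by $\cO_{\bP^N}$, so by iterating a sufficient number of times (controlled by the vanishing order and the degree of the twist) one produces a new global section that does \emph{not} vanish at $(x,[\xi])$. Taking $d_i$ large enough to absorb these twists ensures the argument stays in the range where bigness holds.

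Running this for the universal point $(x,[\xi])$ shows that the common base locus of the family of linear systems $|S^m\Om_{X_t}\otimes \cO_{X_t}(-ma)|$, viewed inside $\bP(\Om_{\cX/S})$, is contained in $\Sigma$. A standard semicontinuity/genericity argument (as in Corollary \ref{HCI}, using that $\dim$ of the base locus is upper semicontinuous in $t$) then lets us pick a generic $t\in S$ for which the base locus in $\bP(\Om_{X_t})$ is contained in the fiber $\Sigma_t$; letting $Y:= \pi_{\Om_X}(\Sigma_t)\subset X_t$ gives the desired subset of codimension at least two, and the inclusion $\pi_{\Om_X}(\Bs(\cO_{\bP(\Om_X)}(1)\otimes \pi_{\Om_X}^*\cO_X(-a)))\subseteq Y$ follows.

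The main obstacle is clearly the global generation statement relegated to section \ref{globallygenerated}: in the Siu--DMR philosophy this is the technical heart, requiring an explicit construction of vector fields on the universal projectivized cotangent space from derivations in the coefficients of the defining polynomials of $X$, and an analysis of their pole order along the divisor at infinity. Everything else in the proof is a rather formal consequence of that construction combined with Theorem \ref{thmjet}: Morse inequalities furnish the sections, the vector fields pry open the base locus, and genericity of $t\in S$ transports the statement from the universal family back to a single $X$.
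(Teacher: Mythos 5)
Your broad strategy is exactly the one the paper follows: apply Theorem \ref{thmjet} with $\kappa=1$ to get bigness, work over the universal family $\cX\to\gP$ with its relative projectivized cotangent bundle, invoke the global generation of the twisted relative tangent bundle from section \ref{globallygenerated}, and differentiate extended sections along these vector fields to constrain the base locus. That much matches.

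However, there is a genuine gap in where you locate the codimension-two statement. You say the global generation theorem holds ``away from a bad locus $\Sigma$ which projects to a codimension $\geq 2$ subset of $X_t$,'' and then set $Y := \pi_{\Om_X}(\Sigma_t)$. This misreads the input: the paper's Theorem \ref{global} asserts global generation \emph{everywhere} on $\bP(\Om_{\cX/\gP})$ (the $\Sigma$ appearing in its proof is the locus $\{z'=0\}$ in the relative tangent space, which does not meet the projectivization at all), so there is no such distinguished codimension-two bad locus to pass to. What the differentiation argument actually yields is the inclusion $\pi_{\Om_{X_{t_0}}}(\Bs)\subseteq Y_{t_0}$ where $Y_{t_0}$ is the zero locus of the chosen extended section $\sigma$, and \emph{a priori $Y_{t_0}$ may contain a divisor}. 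The codimension-two refinement requires a separate argument (the Diverio--Trapani trick): if $Y_{t_0}$ had a divisorial component $E$, then since $\mathrm{Pic}(X)=\mathbb{Z}$ (Lefschetz) the divisor $E$ is ample, so $|mE|$ is base-point-free for some $m$; replacing $\sigma_{t_0}^m$ by $\sigma_{t_0}^m\otimes D\otimes mE^{-1}$ for varying $D\in|mE|$ and re-running the differentiation argument shows the base locus lies in the common zero set of all these sections, which has codimension $\geq 2$. Without this step your proposal does not establish the codimension-two claim.

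A smaller imprecision: you describe the endgame as producing ``a new global section that does \emph{not} vanish at $(x,[\xi])$.'' The paper's argument is the contrapositive and more structured: assuming $(x,[\xi])$ is in the base locus, \emph{all} the Lie-derivative sections vanish there, which forces all the Taylor coefficients $q_{i_1,\dots,i_n}(t_0,0)$ to vanish, hence $\pi(x)\in Y_{t_0}$. This phrasing is what feeds cleanly into the Diverio--Trapani refinement, since the conclusion is an inclusion into a concrete algebraic subvariety rather than a non-vanishing statement.
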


\begin{remark}
We will give an effective bound for $\delta_{N,n,a}$ in section \ref{effectifsymmetric}
\end{remark}

From this we are to deduce some other noteworthy conclusions. First, almost everywhere ampleness in the sense of Miyaoka \cite{Miy83}.

\begin{e-definition}
Let $E$ be a vector bundle on a variety $X$ and $H$ an ample line bundle on $X$. Denote the projection by $\pi : \bP (E)\to X$. Take $T \subset X$. We say that $E$ is ample modulo $T$ if for sufficiently small $\epsilon > 0$, any irreducible curve $C\subset \bP (E)$ such that $\int_C c_1(\cO_{\bP(E)}(1))<\epsilon C\cdot \pi^*H$ satisfies $\pi (C)\subseteq T$. We say that $E$ is almost everywhere ample if there is a proper closed algebraic subset $T\subset X$ such that $E$ is ample modulo $T$. 
\end{e-definition}

\begin{corollary}
Fix $a\in \bN$. If $X\subseteq \bP^N$ is a complete intersection variety of dimension $n$, codimension $c$ and multidegree $(d_1,\cdots , d_c)$ satisfying $c\geq n$ and $d_i\geq \delta_{N,n,(a+1)}$, then $\Om_X\otimes \cO_X(-a)$ is ample outside an algebraic subset of codimension at least two in $X$   
\end{corollary}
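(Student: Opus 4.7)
The plan is to deduce this directly from Theorem \ref{main} applied with $a+1$ in place of $a$. Set $E := \Om_X \otimes \cO_X(-a)$ and use the canonical identification $\bP(E) \cong \bP(\Om_X)$, under which $\cO_{\bP(E)}(1) \cong \cO_{\bP(\Om_X)}(1)\otimes \pi_{\Om_X}^*\cO_X(-a)$. Setting $M := \cO_{\bP(\Om_X)}(1)\otimes\pi_{\Om_X}^*\cO_X(-(a+1))$, we then have $\cO_{\bP(E)}(1) = M\otimes \pi_{\Om_X}^*\cO_X(1)$, and under the hypothesis $d_i \geq \delta_{N,n,(a+1)}$, Theorem \ref{main} furnishes a subvariety $Y\subset X$ of codimension at least two such that $M$ is big and $\pi_{\Om_X}(\Bs(M))\subseteq Y$.

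I would now verify that $E$ is ample modulo $Y$, with ample reference $H := \cO_X(1)_{|X}$ and constant $\epsilon := 1$. Let $C\subset \bP(E)$ be an irreducible curve with $\pi_{\Om_X}(C)\not\subseteq Y$. If $\pi_{\Om_X}(C)$ is a point, then $C$ lies in a fiber of $\pi_{\Om_X}$, so $\pi_{\Om_X}^* H\cdot C = 0$ while $c_1(\cO_{\bP(E)}(1))\cdot C > 0$ by ampleness of $\cO_{\bP(E)}(1)$ on fibers, and the inequality $c_1(\cO_{\bP(E)}(1))\cdot C \geq \epsilon\, \pi_{\Om_X}^* H\cdot C$ holds trivially. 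Otherwise $C$ is not contained in $\pi_{\Om_X}^{-1}(Y)$ and hence, thanks to the bigness of $M$ (which allows one to replace $M$ by a sufficiently large power), there exists some $m \geq 1$ and a section of $M^{\otimes m}$ whose restriction to $C$ is not identically zero. This forces $c_1(M)\cdot C \geq 0$, which rearranges to
\[
c_1(\cO_{\bP(E)}(1))\cdot C = c_1(M)\cdot C + \pi_{\Om_X}^* H\cdot C \geq \epsilon\, \pi_{\Om_X}^* H\cdot C.
\]
By contraposition, any curve $C$ violating this inequality must have $\pi_{\Om_X}(C)\subseteq Y$, which is precisely the ample-modulo-$Y$ condition.

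The argument is almost entirely formal once Theorem \ref{main} is granted, so no genuine obstacle is anticipated. The only careful points are the convention $\cO_{\bP(E\otimes L)}(1) \cong \cO_{\bP(E)}(1)\otimes \pi^*L$ used throughout the paper, and interpreting $\Bs(M)$ as the stable base locus (which is legitimate since $M$ is big).
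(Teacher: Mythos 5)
Your proof is correct and follows essentially the same route as the paper's: both apply Theorem \ref{main} with $a+1$, decompose $\cO_{\bP(\Om_X)}(1)\otimes\pi_{\Om_X}^*\cO_X(-a)$ as $M\otimes\pi_{\Om_X}^*\cO_X(1)$ with $M$ the big line bundle furnished by the theorem, and then reduce the ample-modulo-$Y$ inequality (with $\epsilon=1$) to the observation that a curve not lying over $Y$ must have $\int_C M\geq 0$. You phrase it in contrapositive form and add a short case for curves contained in a fiber, while the paper argues directly that the defining inequality forces $\int_C M<0$ and hence $C\subseteq\Bs(M)$; these are the same argument up to logical rearrangement.
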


\begin{proof}
Applying the theorem we find that  $\cO_{\bP(\Om_X)}(1)\otimes\pi_{\Om_X}^{*}\cO_X(-a-1)$ is big an that there exists an algebraic subset $Y\subset X$ of codimension 2 in $X$ such that $\pi_{\Om_X}(\Bs(\cO_{\bP(\Om_X)}(1)\otimes\pi_{\Om_X}^{*}\cO_X(-a-1))) \subseteq Y$. Now take any $\epsilon \leq 1$. Take an irreducible curve $C\subset \bP(\Om_X(-a))=\bP(\Om_X)$ such that $$\int_C\cO_{\bP(\Om_X(-a))}(1)=\int_C\cO_{\bP(\Om_X)}(1)\otimes \pi_{\Om_X}^*\cO_X(-a)<\epsilon \int_C\pi_{\Om_X}*\cO_X(1)\leq \int_C\pi_{\Om_X}^*\cO_X(1).$$

This implies that $\int_C\cO_{\bP(\Om_X)}(1)\otimes \pi_{\Om_X}^*\cO_X(-a-1)<0$. Thus in particular we get $C\subseteq \Bs(\cO_{\bP(\Om_X)}(1)\otimes\pi_{\Om_X}^{*}\cO_X(-a-1))$ and thus $\pi_{\Om_X} (C)\subseteq Y$.
\end{proof}

And we also have the announced positive answer to Debarre's conjecture for surfaces (the bound given here will be discussed in section \ref{dim2} ).

\begin{corollary}\label{surface}
If $N\geq 4$ and $S\subset \bP^N$ a generic complete intersection surface of multidegree $(d_1,\cdots,d_{N-2})$ satisfying $d_i\geq \frac{8N+2}{N-3}$, then $\Om_S$ is ample.
\end{corollary}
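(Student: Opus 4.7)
The plan is to prove ampleness of $L:=\cO_{\bP(\Om_S)}(1)$ on the threefold $\bP(\Om_S)$ via the Nakai--Moishezon criterion. Writing $h:=\pi_{\Om_S}^*\cO_S(1)$, the strategy combines two positivity inputs: first, Theorem~\ref{main} applied with $a=1$ (together with the corollary deduced above) gives that $L-h$ is big with $\pi_{\Om_S}(\Bs(L-h))$ contained in a subset $Y\subset S$ of codimension $\geq 2$ -- hence a finite set, since $\dim S=2$ -- so $\Om_S(-1)$ is ample modulo the finite set $Y$; second, Proposition~\ref{droite} gives that $L+2h$ is ample as soon as $S$ contains no line, which a standard incidence-variety dimension count ensures once $\sum d_i > N$, easily implied by the bound stated. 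The bridge between these is the convex decomposition
$$L = \tfrac{2}{3}(L-h) + \tfrac{1}{3}(L+2h),$$
expressing $L$ as a positive rational combination of an almost-everywhere ample class and an ample class.

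I would then check Nakai--Moishezon case by case on the threefold $\bP(\Om_S)$. For a curve $C$: if $\pi_{\Om_S}(C)$ is a point, $C$ is a fiber $\bP^1$ and $L\cdot C=1$; otherwise $\pi_{\Om_S}(C)$ is a curve, not contained in the finite set $Y$, so $(L-h)\cdot C>0$ by almost-everywhere ampleness and $(L+2h)\cdot C>0$ by ampleness, giving $L\cdot C>0$, and in particular $L$ is nef. For a surface $T\subset \bP(\Om_S)$: since $\pi_{\Om_S}^{-1}(Y)$ is $1$-dimensional, $T\not\subseteq \pi_{\Om_S}^{-1}(Y)$, and one argues that $(L-h)|_T$ is nef (by the curve case restricted to $T$) and big on $T$ (as $T$ lies outside the augmented base locus of $L-h$, which is contained in $\pi_{\Om_S}^{-1}(Y)$); hence $(L-h)^2\cdot T>0$. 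Expanding $9L^2\cdot T = 4(L-h)^2\cdot T + 4(L-h)(L+2h)\cdot T + (L+2h)^2\cdot T$ and using $(L-h)(L+2h)\cdot T\geq 0$ (nef times ample) and $(L+2h)^2\cdot T>0$ (ample squared on a surface) yields $L^2\cdot T>0$. Finally, $L$ is big (Theorem~\ref{main} with $a=0$) and nef, so $L^3>0$, and Nakai--Moishezon concludes that $L$ is ample, equivalently $\Om_S$ is ample.

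The main technical obstacle is the bigness of $(L-h)|_T$ for a surface $T$: this requires an upgrade of the stable-base-locus statement of Theorem~\ref{main} to the augmented base locus (the key point being that since $Y$ has codimension $\geq 2$ in $S$, $\pi_{\Om_S}^{-1}(Y)$ contains no divisor of $\bP(\Om_S)$, so the augmented base locus carries no divisorial component beyond the stable one), or equivalently a Miyaoka-type argument showing that ``ample modulo $Y$'' implies positive top self-intersection on every subvariety not projecting into $Y$. The explicit bound $d_i\geq \frac{8N+2}{N-3}$ will come from the effective form of $\delta_{N,2,2}$ to be established in Section~\ref{effectifsymmetric}.
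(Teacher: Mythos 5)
Your two positivity inputs and the convex decomposition $L=\tfrac{2}{3}(L-h)+\tfrac{1}{3}(L+2h)$ are exactly the right ingredients, and the argument for nefness of $L-h$ (fibers over $Y$ meet $L-h$ with degree $1$; all other curves lie outside $\Bs(L-h)\subseteq\pi_{\Om_S}^{-1}(Y)$) is correct. However, the argument is overbuilt, and the stated ``main technical obstacle'' is a red herring.

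First, you do not need Nakai--Moishezon at all. Once you know $L-h$ is nef and $L+2h$ is ample, the identity $L=\tfrac{2}{3}(L-h)+\tfrac{1}{3}(L+2h)$ exhibits $L$ as a positive $\mathbb{Q}$-combination of a nef class and an ample class, and the sum of a nef class and an ample class is ample (the ample cone being the interior of the nef cone). This gives $\cO_{\bP(\Om_S)}(1)$ ample, hence $\Om_S$ ample, immediately.

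Second, even within your Nakai--Moishezon framework, the passage from the stable base locus to the augmented base locus, and the bigness of $(L-h)|_T$, are not needed. In your own expansion $9L^2\cdot T=4(L-h)^2\cdot T+4(L-h)(L+2h)\cdot T+(L+2h)^2\cdot T$, the first term is $\geq0$ because the self-intersection of a nef class on a surface is nonnegative, the second is $\geq0$ (nef times ample), and the third is strictly positive; this already forces $L^2\cdot T>0$. So the ``obstacle'' you flag in the last paragraph dissolves.

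Finally, there is a genuine discrepancy in the effective bound. You invoke Theorem~\ref{main} with $a=1$ (and the almost-everywhere-ampleness corollary with $a=1$), which by the paper's convention $\delta_{N,n,a}=\Gamma_{N,n,a+N}$ requires $d_i\geq\delta_{N,2,1}=\Gamma_{N,2,N+1}$ (respectively $\delta_{N,2,2}=\Gamma_{N,2,N+2}$). But the bound $\frac{8N+2}{N-3}$ claimed in the statement is, by Section~\ref{dim2}, the estimate for $\Gamma_{N,2,N}=\delta_{N,2,0}$ --- the $a=0$ case. Your route yields $\Gamma_{N,2,N+1}\leq\frac{8N+8}{N-3}$ at best, which does not match the stated constant. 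You should either supply an argument that works starting from Theorem~\ref{main} with $a=0$ alone (i.e. from bigness of $L$ itself with $\Bs(L)$ over a finite set), or acknowledge that the bound obtained by your method is slightly larger than the one announced.
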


In particular, in $\bP^4$, generically, the intersection of two hypersurfaces of degree greater than $34$ have ample cotangent bundle. To our knowledge this is the first example of surfaces with ample cotangent bundle in $\bP^4$, a question that was already raised by M. Schneider in \cite{Sch92}.


\subsubsection{Proof of the main theorem}\label{mainproof}
First let us introduce some notations let $\textbf{P}:=\bP^{N_{d_1}}\times \cdots \times \bP^{N_{d_c}}$ where $\bP^{N_{d_i}}:=\bP(H^0(\bP^N,\cO_{\bP^N}(d_i))^*)$, and $\cX:=\{(x,(t_1,\cdots,t_c))\in \bP^N\times\textbf{P}\ \ /\ \ t_i(x)=0 \ \ \forall i \}$. We will also denote $\rho_1:\cX \to \bP^N$ the projection onto the first factor and $\rho_2:\cX\to \textbf{P}$. We will use the standard notation $\cO_{\textbf{P}}(a_1,\dots,a_c)$ to denote line bundles on $\textbf{P}$. Also, write $\pi: \bP(\Om_{\cX / \gP})\to \cX$ for the standard projection.
As in \cite{DMR10} the proof of this theorem is based on Theorem \ref{thmjet} and on a global generation property that we will prove in \ref{globallygenerated}.

\begin{thmglobal}
The bundle
$$T\bP(\Om_{\cX / \gP})\otimes \pi^*\rho_{1}^*\cO_{\bP^N}(N)\otimes \pi^*\rho_{2}^*\cO_{\gP}(1,\cdots,1)$$
is globally generated on $\bP(\Om_{\cX / \gP})$.
\end{thmglobal}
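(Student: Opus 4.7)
My plan is to construct sufficiently many global sections of the twisted tangent bundle to generate every tangent direction pointwise, following the strategy developed in \cite{DMR10} for the universal hypersurface. The idea is to embed $\bP(\Om_{\cX/\gP})$ into a larger ambient space with abundant automorphisms, construct vector fields there with the prescribed twist, and then enforce tangency to $\bP(\Om_{\cX/\gP})$ by compensating motions in $\bP^N$ with motions in the coefficient space $\gP$.

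Concretely, the natural quotient $\rho_1^*\Om_{\bP^N}|_\cX \twoheadrightarrow \Om_{\cX/\gP}$ yields a closed embedding $\bP(\Om_{\cX/\gP}) \hookrightarrow \bP(\Om_{\bP^N})\times \gP$. On this ambient space I draw from two sources of vector fields. First, the infinitesimal $GL(N+1)$-action on $\bP^N$ lifts canonically to $\bP(\Om_{\bP^N})$, and combined with the relative Euler sequence on the projective bundle $p\colon \bP(\Om_{\bP^N})\to\bP^N$ this yields global generation of $T\bP(\Om_{\bP^N}) \otimes p^*\cO_{\bP^N}(N)$, the exponent $N$ being the natural one extracted from the Euler sequence on $\bP^N$. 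Second, the Euler sequences on each factor $\bP^{N_{d_i}}$ give global generation of $T\gP \otimes \cO_\gP(1,\dots,1)$. Together these show that the tangent bundle of $\bP(\Om_{\bP^N})\times\gP$, twisted by the pullback of $\rho_1^*\cO_{\bP^N}(N)\otimes \rho_2^*\cO_\gP(1,\dots,1)$, is globally generated on the ambient space.

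To descend to $\bP(\Om_{\cX/\gP})$ I must make the constructed fields tangent to $\cX$. Writing an ambient field as $(\xi,\eta)$ with $\xi$ a field on $\bP(\Om_{\bP^N})$ projecting to a field $\bar\xi$ on $\bP^N$ and $\eta=(\eta_1,\dots,\eta_c)$ on $\gP$, tangency to $\cX$ at $(x,t)$ reads $\bar\xi(x)(\tilde t_i) + \tilde\eta_i(x) = 0$ for all $i$, where $\tilde t_i,\tilde\eta_i$ are homogeneous polynomial representatives. Since $T_{t_i}\bP^{N_{d_i}}$ is identified, up to an $\cO(-1)$ factor, with a quotient of $H^0(\bP^N,\cO_{\bP^N}(d_i))$, one solves this by setting $\tilde\eta_i := -\bar\xi(\tilde t_i)$; the loss is absorbed precisely by the $\cO_\gP(1,\dots,1)$ twist, while the $\cO_{\bP^N}(N)$ twist comes from the earlier lift of $\xi$. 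The main technical obstacle I anticipate is verifying that the resulting sections generate not only the horizontal directions (tangent to $\cX$) but also the extra projective direction of $\bP(\Om_{\cX/\gP})$ over $\cX$: one must carefully analyze how the lifted $GL(N+1)$-action on $\bP(\Om_{\bP^N})$ acts on the fibers of $\bP(\Om_{\cX/\gP})\to\cX$ and combine it with the relative Euler sequence for this latter bundle to establish pointwise surjectivity everywhere, keeping track of all the twists.
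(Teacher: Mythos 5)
Your strategy is the right one in spirit — compensate motions in $\bP^N$ by motions in the parameter space $\gP$ — and it is in fact the same philosophy as the paper, which explicitly constructs vector fields in coordinates following Merker's work \cite{Mer09}. But your proposal has a genuine gap precisely at what you yourself flag as the ``main technical obstacle.''

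The defining equations of $\bP(\Om_{\cX/\gP})$ inside $\bP(\Om_{\bP^N})\times\gP$ come in \emph{two} layers. In the paper's inhomogeneous coordinates $(z,a,z')$, they are $f_i=\sum a^i_\alpha z^\alpha=0$ (the incidence condition defining $\cX$) \emph{and} the first-jet condition $f'_i=\sum_{\alpha,k} a^i_\alpha\,\partial_{z_k}(z^\alpha)\,z'_k=0$. Your compensation rule $\tilde\eta_i := -\bar\xi(\tilde t_i)$ makes the ambient field annihilate the first set of equations (so it is tangent to $\rho_2^{-1}$-fibers of $\cX$), but you never arrange $T(f'_i)=0$. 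A vector field tangent to $\cX$ need not be tangent to $\bP(\Om_{\cX/\gP})$, and the naive choice of $\eta$ will in general fail the $f'_i$ constraint. The paper handles this by solving the two linear equations $T(f_i)=T(f'_i)=0$ \emph{simultaneously} for each $i$: for example, in the construction of fields of the form $\sum A^i_\alpha\,\partial/\partial a^i_\alpha$, the paper picks two coefficients $A^i_{(0,\dots,0)}$ and $A^i_{(1,0,\dots,0)}$ as unknowns and eliminates them using both equations (which is also exactly where the pole order $N$ comes from — the crucial observation being that only two equations per $i$ need to be satisfied here, as opposed to $n+1$ in Merker's general higher-jet setting). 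Similarly, when spanning the vertical $\partial/\partial z'_k$ directions, the fields $T_\Lambda$ must again be corrected by $a$-coefficients $A^i_\alpha(z,a,\Lambda)$ solving both constraints, and it is a nontrivial computation (quoted from \cite{Mer09}) that this can be done with pole order at most $N$.

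So the missing idea is not the generation of the vertical directions per se, but the proof that the tangency corrections can be carried out consistently against \emph{both} layers of equations \emph{without increasing the pole order beyond $N$ and $1$} in the respective variables. Asserting that ``the loss is absorbed precisely by the $\cO_\gP(1,\dots,1)$ twist'' is exactly the point that needs to be verified, and it is where all the work in the paper's proof lives.
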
 

Observe also the following simple remark.

\begin{remark}\label{base}
Let $X\subset \bP^N$ be any projective variety. If $q\geq0$ then $$\Bs(\cO_{\bP(\Om_X)}(1)\otimes\pi_{\Om_X}^{*}\cO_X(-a))\subseteq \Bs(\cO_{\bP(\Om_X)}(1)\otimes\pi_{\Om_X}^{*}\cO_X(-a-q)).$$
\end{remark}

We are now in position to give the proof of the main result. We take the notation of Theorem \ref{main}. We will prove that $\delta_{N,n,a}:=\Gamma_{N,n,(a+N)}$ (with the notation of Theorem \ref{thmjet}) will suffice. Fix a multidegree $(d_1,\cdots,d_c)$ such that $d_i\geq \delta_{N,n,a}=\Gamma_{N,n,(a+N)}$ for all $0\leq i\leq c$.  We start by applying Theorem \ref{thmjet} to find some $k\in \bN$ such that $H^0(X,S^k\Om_X\otimes\cO_X(-ka-kN))\neq 0$. Now take $U\subset \gP$ such that the restriction map 
$$H^0(\cX_U,S^k\Om_{\cX / \gP}\otimes\rho_1^*\cO_{\bP^N}(-ka-kN))\to H^0(X_t,S^k\Om_{X_t}\otimes \cO_{X_t}(-ka-kN))$$
is surjective for all $t\in U$. Where $\cX_U:=\rho^{-1}_{2}(U)$ and $X_t:=\rho^{-1}_{2}(\{t\})$. Fix $t_0\in U$ and a non zero section $$\sigma_0\in H^0(X_{t_0},S^k\Om_{X_{t_0}}\otimes\cO_{X_{t_0}}(-ka-kN))$$ and extend it to a section $$\sigma\in H^0(\cX_U,S^k\Om_{\cX / \gP}\otimes\rho_1^*\cO_{\bP^N}(-ka-kN)).$$
 Let $\cY:=(\sigma=0)\subset \cX_U$. We will prove that $$\pi_{t_0}(\Bs(\cO_{\bP(\Om_{X_{t_0}})}(1)\otimes\pi_{\Om_{X_{t_0}}}^{*}\cO_{X_{t_0}}(-a-q)))\subset Y_0.$$ 
Where $Y_t:=X_t\cap \cY$. Denote $\tilde{\sigma}$ the section in $H^0(\bP(\Om_{\cX/\gP})_{|U},\cO_{\bP(\Om_{\cX/\gP})}(k)\otimes\pi^*\rho_1^*\cO(-ka-kN))$ corresponding to $\sigma$ under the canonical isomorphism 
$$H^0(\bP(\Om_{\cX/\gP})_{|U},\cO_{\bP(\Om_{\cX/\gP})}(k)\otimes\pi^*\rho_1^*\cO_{\bP^N}(-ka-kN))=H^0(\cX_U,S^k\Om_{\cX / \gP}\otimes\rho_1^*\cO_{\bP^N}(-ka-kN)).$$

Let $x\in \Bs(\cO_{\bP(\Om_{X_{t_0}})}(1)\otimes\pi_{\Om_{X_{t_0}}}^{*}\cO_{X_{t_0}}(-a)) $ so that in particular, thanks to remark \ref{base},  $x\in \tilde{\sigma}_{t_0}$. We will now show that $\pi_{t_0}(x)\in Y_{t_0}$. Take coordinates around $x$ of the form $(t,z_i,[z'_i])$ such that $(t_0,0,[1:0:\cdots:0])=x$. In those coordinates we write 
\begin{eqnarray}
\sigma=\sum_{i_1+\cdots +i_n=k}{q_{i_1,\cdots,i_n}(t,z){z'_1}^{i_1}\cdots{z'_n}^{i_n}}.\nonumber
\end{eqnarray}
Therefore,
$$(\sigma=0)=\{(t,z)\ \ / \ \ q_{i_1,\cdots,i_n}(t,z)=0 \ \ \forall (i_1,\cdots,i_n)\}.$$

Fix any $(i_1,\cdots,i_n)\in \bN^n $ such that $i_1 + \cdots +i_n=k$ , we have to show that $q_{i_1,\cdots,i_n}(t_0,0)=0$. To do so, we apply Theorem \ref{global} to construct for each $1\leq j \leq n$

$$V_j \in H^0(\bP(\Om_{\cX / \gP}),T\bP(\Om_{\cX / \gP})\otimes\cO_{\bP^N}(N)\otimes\cO_{\gP}(1,\cdots,1))$$
 
such that in our coordinates, $V_j(t_0,0,[1:0:\cdots : 0])=\frac{\partial }{\partial z'_j}$.
Therefore by differentiating and contracting by $V_j$ $i_j$ times for each $1\leq j \leq n$ we get a new section
$$L_{V_1}\cdots L_{V_1}L_{V_2}\cdots L_{V_n}\tilde{\sigma}\in H^0(\bP(\Om_{\cX/\gP})_{|U},\cO_{\bP(\Om_{\cX/\gP})}(k)\otimes\pi^*\rho_1^*\cO_{\bP^N}(-ka)).$$

A local computation gives :
$$L_{V_1}\cdots L_{V_1}L_{V_2}\cdots L_{V_n}\tilde{\sigma}(t_0,0,[1:0:\cdots:0])=i_1!\cdots i_n!q_{i_1,\cdots,i_n}(t_0,0).$$
 
But since by hypothesis $x\in \Bs(\cO_{\bP(\Om_{X_{t_0}})}(1)\otimes\pi_{\Om_{X_{t_0}}}^{*}\cO_{X_{t_0}}(-a))$ we know that $$L_{V_1}\cdots L_{V_1}L_{V_2}\cdots L_{V_n}\tilde{\sigma}(t_0,0,[1:0:\cdots:0])=0.$$

Therefore we see that $q_{i_1,\cdots,i_n}(t_0,0)=0$, proving our claim.\\
To complete the proof we just have to show, as in \cite{D-T09}, the codimension two rafinement. Suppose that $Y_{t_0}$ has a divisorial component $E$. Since $E$ is effective and $Pic(X)=\mathbb{Z}$ we can deduce that $E$ is ample. Therefore there is an $m\in \mathbb{N}$ such that $mE$ is very ample. Now take $\sigma_{t_0}^m\in H^0(X,S^{km}\Om_{X_{t_0}}\otimes \cO_{X_{t_0}}(-mka-mkN))$, the divisorial component of the zero locus of $\sigma_{t_0}^m$ is $mE$. Now, for any $D \in |mE|$ we get a new section $\sigma_{t_0}^m\otimes D\otimes mE^{-1}\in H^0(X,S^{km}\Om_{X_{t_0}}\otimes \cO_{X_{t_0}}(-mka-mkN))$. By applying the same argument as above we know that the image of the base locus $\Bs(\cO_{\bP(\Om_{X_{t_0}})}(1)\otimes\pi_{\Om_{X_{t_0}}}^{*}\cO_{X_{t_0}}(-a))$ lies in the zero locus of this new section $\sigma_{t_0}^m\otimes D\otimes mE^{-1}$ whose divisorial component is $D$. Thus, since $|mE|$ is base point free, we know that the image must lie in the codimension at least two part of $Y_{t_0}$. This concludes the proof.



\subsection{Vector Fields}\label{globallygenerated}

As announced during the proof of the main result we are now going to prove the global generation property we used.

\begin{theorem}\label{global}
The bundle
$$T\bP(\Om_{\cX / \gP})\otimes \pi^*\rho_{1}^*\cO_{\bP^N}(N)\otimes \pi^*\rho_{2}^*\cO_{\gP}(1,\cdots,1)$$
is globally generated on $\bP(\Om_{\cX / \gP})$.
\end{theorem}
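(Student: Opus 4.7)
The plan is to apply the vector-field method of Siu, Merker and Diverio--Merker--Rousseau \cite{DMR10}, adapted from the universal hypersurface to the universal complete intersection $\cX$. Throughout, I write $\mathcal{L}:=\pi^*\rho_1^*\cO_{\bP^N}(N)\otimes\pi^*\rho_2^*\cO_\gP(1,\ldots,1)$ for the prescribed twist.

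First I would use the two relative tangent sequences
$$0\to T_{\bP(\Om_{\cX/\gP})/\cX}\to T\bP(\Om_{\cX/\gP})\to \pi^*T\cX\to 0, \qquad 0\to T_{\cX/\gP}\to T\cX\to \rho_2^*T\gP\to 0$$
to reduce the global generation of $T\bP(\Om_{\cX/\gP})\otimes\mathcal{L}$ to the global generation, with twist $\mathcal{L}$, of the three pieces $\pi^*\rho_2^*T\gP$, $T_{\bP(\Om_{\cX/\gP})/\cX}$, and $\pi^*T_{\cX/\gP}$. The piece $\pi^*\rho_2^*T\gP\otimes\mathcal{L}$ is immediate because $T\gP$ is already generated by the infinitesimal action of $\prod_i PGL_{N_{d_i}+1}$. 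For the piece $T_{\bP(\Om_{\cX/\gP})/\cX}\otimes\mathcal{L}$, the relative Euler sequence (in Grothendieck convention) presents $T_{\bP/\cX}$ as a quotient of $\pi^*T_{\cX/\gP}\otimes\cO_{\bP(\Om_{\cX/\gP})}(1)$; combining this with Lemma~\ref{Han} (which makes $\Om_{\cX/\gP}\otimes\rho_1^*\cO_{\bP^N}(2)$, and hence $\cO_{\bP(\Om_{\cX/\gP})}(1)\otimes\pi^*\rho_1^*\cO(2)$, globally generated) reduces the problem to the generation of $T_{\cX/\gP}$ with a smaller twist, which follows from Step~3.

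The main step is to prove global generation of $T_{\cX/\gP}\otimes\mathcal{L}|_\cX$ on $\cX$; then $\pi^*T_{\cX/\gP}\otimes\mathcal{L}$ is automatic. From the normal-bundle sequence for $\cX\subset\bP^N\times\gP$,
$$0\to T\cX\to T(\bP^N\times\gP)|_\cX\to \bigoplus_{i=1}^{c}\rho_1^*\cO_{\bP^N}(d_i)\otimes\rho_2^*\cO_\gP(0,\ldots,1_i,\ldots,0)\big|_\cX\to 0,$$
whose right-hand surjection is differentiation of the universal sections $t_i$, any section of $T(\bP^N\times\gP)\otimes\mathcal{L}$ annihilating each $t_i$ restricts to a section of $T\cX\otimes\mathcal{L}$. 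Following the Siu--Merker construction I would exhibit two complementary families of such tangent sections. \emph{(a)} Lifted horizontal fields: for $v\in H^0(\bP^N, T\bP^N\otimes\cO(\ell))$ with $\ell\leq N-1$, the pair $(v, \eta(v))$ with correction $\eta_i(v):=-v(t_i)$ (viewed through the Euler quotient as a tangent vector to $\bP^{N_{d_i}}$) is tangent to $\cX$ by construction. \emph{(b)} Voisin-type vertical fields
$$W^{(i)}_{\alpha,\beta}:=z^{\beta}\frac{\partial}{\partial a^{(i)}_\alpha}-z^{\alpha}\frac{\partial}{\partial a^{(i)}_\beta}$$
for pairs of monomials of degree $d_i$, whose tangency to $\cX$ is a direct check: $W^{(i)}_{\alpha,\beta}(t_j)=\delta_{ij}(z^\beta z^\alpha-z^\alpha z^\beta)=0$. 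One then verifies pointwise at each $(x_0,t_0)\in\cX$ that family~(a) spans $T_{x_0}X_{t_0}$ in the $\rho_1^*T\bP^N$-direction, while family~(b) covers the ``vertical in $\gP$'' part of $T_{\cX/\gP}$.

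The main obstacle is the precise twist bookkeeping. As written, the Voisin field $W^{(i)}_{\alpha,\beta}$ naturally lives in a bundle carrying a $\rho_1^*\cO(d_i)$ factor, which exceeds the allowed $\rho_1^*\cO(N)$ in $\mathcal{L}$ as soon as $d_i>N$; similarly, the horizontal correction $v(t_i)$ has degree $d_i$ in $z$. The crucial observation is that on $\cX$ the vanishing relations $t_i=0$ can be used to rewrite monomials $z^\alpha$ of degree $d_i$ modulo the defining ideal, effectively trading the $\rho_1^*\cO(d_i-N)$-excess against additional $\rho_2^*\cO_\gP$-twist. Equivalently, the tautological relation provided by $t_i\in H^0(\rho_1^*\cO(d_i)\otimes\rho_2^*\cO_\gP(0,\ldots,1_i,\ldots,0))$ furnishes a canonical identification of these two factors modulo~$\cX$. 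Carrying out this identification systematically, and then verifying pointwise generation of $T_{\cX/\gP}\otimes\mathcal{L}$ at every point of $\cX$, is where the heart of the work lies.
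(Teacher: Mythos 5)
Your proposal correctly identifies the Merker--Siu vector-field method as the right approach, and your family~(a) of lifted horizontal fields is essentially the paper's $T_j$'s. However, there are several substantive gaps. First, the reduction via short exact sequences is not a reduction: if $0\to A\to B\to C\to 0$ and both $A\otimes\mathcal{L}$ and $C\otimes\mathcal{L}$ are globally generated, $B\otimes\mathcal{L}$ need not be unless one can explicitly lift sections of $C$ to $B$. The paper's construction does exactly that --- the $T_j$'s live in $T\cX$, with nonvanishing $\partial/\partial a^i_\alpha$ components, and project onto the $T_{\cX/\gP}$-direction; they are lifts, not sections of the sub- or quotient bundle. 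Your sentence ``reduce the global generation ... to the global generation of the three pieces'' is therefore the statement that needs proving, not a step in the proof. In particular the Euler-sequence treatment of $T_{\bP(\Om_{\cX/\gP})/\cX}$ tacitly requires global generation of $T_{\cX/\gP}$ (a subbundle of $T\cX$) with a small twist, which is not established and is not how the paper proceeds --- the paper spans the $\partial/\partial z'_k$-directions directly via Merker's $T_\Lambda$ fields.

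Second, and more decisively, the Voisin fields $W^{(i)}_{\alpha,\beta}=z^\beta\partial_{a^i_\alpha}-z^\alpha\partial_{a^i_\beta}$ carry a $\rho_1^*\cO_{\bP^N}(d_i)$-twist, which exceeds the allowed $\cO_{\bP^N}(N)$ as soon as $d_i>N$. You flag this, but the proposed remedy --- ``trading the $\rho_1^*\cO(d_i-N)$-excess against additional $\rho_2^*\cO_\gP$-twist via the defining ideal'' --- is not a valid operation: the allowed $\cO_\gP(1,\ldots,1)$ twist does not absorb high $\cO_{\bP^N}$-degree, and rewriting modulo $t_i=0$ is the zero section on $\cX$, not a degree-lowering device. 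This is precisely where the paper does something genuinely different: for $|\alpha|\leq N$ it solves the $2\times 2$ tangency system $T(f_i)=T(f'_i)=0$ explicitly for the free coefficients $A^i_{(0,\ldots,0)},A^i_{(1,0,\ldots,0)}$ on $\{z'_1\neq 0\}$, giving pole order $\leq N$ in $z$, and for larger $\alpha$ it uses Merker's translated fields $T^\ell_\alpha$; the Voisin fields simply do not appear. Moreover, the Voisin fields are only tangent to $\cX$, not to $\bP(\Om_{\cX/\gP})$: one must also kill the prolonged equation $f'_i=\sum_{\alpha,k}a^i_\alpha\frac{\partial z^\alpha}{\partial z_k}z'_k$, i.e.\ check $W(f'_i)=0$, and $W^{(i)}_{\alpha,\beta}(f'_i)=z^\beta\sum_k\frac{\partial z^\alpha}{\partial z_k}z'_k-z^\alpha\sum_k\frac{\partial z^\beta}{\partial z_k}z'_k$ is not zero in general. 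The paper builds in both tangency equations from the start and this is where the improved pole bound $N$ comes from: only two equations instead of $n+1$.
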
 


The proof of this statement is, almost, the same as the proof of the main Theorem of \cite{Mer09}, so there is nothing here that wasn't already in Merker's paper. However since our situation is slightly different we still show how one can adapt Merker's computations here, and in particular we will point out the small differences, and where we are able to gain the better bound. This improvement in the bound is due to the fact that in the situation of \cite{Mer09}  the constructed vector fields have to satisfy many equations (as many as the dimension plus one) to be tangent to the higher order jet space, whereas in our situation we only need to go up to jets of order one, thus the constructed vector fields just have to satisfy two equations.  Also, for the reader's convenience, we adopt the notations of \cite{Mer09}.   


\subsubsection{Notations and Coordinates}

Now we fix homogeneous coordinates on $\bP^{N}$ and $\bP^{N_{d_i}}$ for any $1\leq i\leq c$.

\begin{eqnarray}
\left[Z\right]&=&\left[Z_0:\cdots : Z_N\right]\in \bP^N \nonumber \\
\left[A^{i}\right]&=&\left[(A^{i}_{\alpha})_{\alpha \in \bN^{N+1}, |\alpha |=d_i}\right]\in \bP^{N_{d_i} }\nonumber
\end{eqnarray}

In those coordinates $\cX=(F_{1}=0)\cap \cdots \cap (F_{c}=0)$ where

$$F_{i}=\sum_{\substack{\alpha \in \bN^{N+1} \\   |\alpha |=d_i}}A^{i}_{\alpha}Z^{\alpha} .$$
 
To construct vector fields explicitly it will be convenient to work in inhomogeneous coordinates. So from now on we suppose $Z_0 \neq 0$ and $A^{i}_{(0,d_i,0,\cdots,0)}\neq 0$ and we introduce the corresponding coordinates on $\bC^N$ and on $\bC^{N_{d_i}}$, by setting $z_i:=\frac{Z_i}{Z_0}$ and $a^{i}_{(\alpha_1,\cdots, \alpha_N)}=\frac{A^{i}_{(\alpha_0,\cdots ,\alpha_N)}}{A^{i}_{(0,d_i,0,\cdots,0)}}$ where $\alpha_0=d_i-\alpha_1-\cdots - \alpha_N$. Now in those coordinates the restriction $\cX_0$ of $\cX$ to the open subset $\bC^N\times\gP^0\subset \bP^N\times \gP$ where $\gP^0=\bC^{N_{d_1}}\times \cdots \times \bC^{N_{d_c}}$is defined by 

$$\cX_0=(f_{1}=0)\cap \cdots \cap (f_{c}=0)$$

where 
$$f_{i}=\sum_{\substack{\alpha \in \bN^{N} \\   |\alpha |\leq d_i}}a^{i}_{\alpha}z^{\alpha}.$$

On  $\bC^N\times \gP^0 \times \bC^N$ we will use the coordinates $(z_i,a^{1}_{\alpha},\cdots , a^{c}_{\alpha},z'_{k})$.
Now the equations defining the relative tangent bundle $T_{\cX/\gP^0}\subset \bC^N\times \gP^0 \times \bC^N$ are 

$$f_{i}=\sum_{\substack{\alpha \in \bN^{N} \\   |\alpha |\leq d_i}}a^{i}_{\alpha}z^{\alpha}=0$$
and
$$f'_{i}=\sum_{\substack{\alpha \in \bN^{N} \\   |\alpha |\leq d_i}}\sum_{k=1}^{N}a^{i}_{\alpha}\frac{\partial z^{\alpha}}{\partial z_k}z'_k=0.$$


\subsubsection{Vector Fields on $T_{\cX^0/\gP^0}$}

Let $\Sigma = \{ (z_i,a^{1}_{\alpha},\cdots , a^{c}_{\alpha},z'_{k})\ \ /\ \ (z'_1, \cdots , z'_N)\neq 0\}$.
Following \cite{Mer09} we are going to construct explicit vector fields on  $T_{\cX^0/\gP^0}$ (outside $\Sigma$) with prescribed pole order when we look at them as meromorphic vector 
fields on $T_{\cX/\bP}$. It will also be clear that the constructed vector fields can actually be viewed as vector fields on $\bP(\Om_{\cX/\gP})$.\\
A global vector field on $\bC^N\times \gP^0 \times \bC^N$ is of the form 

$$T=\sum_{j=1}^{N}Z_j\frac{\partial}{\partial z_j}+\sum_{\substack{\alpha \in \bN^{N} \\  |\alpha |\leq d_1}}A^{1}_{\alpha}\frac{\partial}{\partial a^{1}_{\alpha}}+\cdots +\sum_{\substack{\alpha \in \bN^{N} \\   |\alpha |\leq d_c}}A^{c}_{\alpha}\frac{\partial}{\partial a^{c}_{\alpha}}+\sum_{k=1}^{N}Z'_{k}\frac{\partial}{\partial z'_k}.$$

And such a vector field $T$ is tangent to $T_{\cX^0 /\gP^0}$ if for all $1\leq i \leq c$ 

\[
\left\{ {\begin{array}{ccc}
 T(f_{i}) & = & 0  \\
 T(f'_{i}) &  = &0  
 \end{array} } \right.
\]

First we construct for each $0\leq i \leq c$ vector fields of the form  
$$\sum_{\substack{\alpha \in \bN^{N} \\   |\alpha |\leq N}}A^{i}_{\alpha}\frac{\partial}{\partial a^{i}_{\alpha}}.$$

Such a vector field is tangent to $T_{\cX^0/\gP^0}$ if

$$T(f_{i})=\sum_{\substack{\alpha \in \bN^{N} \\   |\alpha |\leq N}}A^{i}_{\alpha}z^{\alpha} =  0  $$
and
$$T(f'_{i})=\sum_{\substack{\alpha \in \bN^{N} \\   |\alpha |\leq N}}\sum_{k=1}^{N}A^{i}_{\alpha}\frac{\partial z^{\alpha}}{\partial z_k}z'_k  = 0  .$$

As we are working outside $\Sigma$ we may as well suppose $z'_1\neq 0$. Set 
$$\cR_0(z,A)=\sum_{\substack{\alpha \in \bN^{N} \\   |\alpha |\leq N \\ \alpha \neq (0,\cdots ,0) \\ \alpha \neq (1,0,\cdots,0)}}A^{i}_{\alpha}z^{\alpha} \ \ and \ \ \cR_1(z,A)=\sum_{\substack{\alpha \in \bN^{N} \\   |\alpha |\leq N \\ \alpha \neq (0,\cdots ,0) \\ \alpha \neq (1,0,\cdots,0)}}\sum_{k=1}^{N}A^{i}_{\alpha}\frac{\partial z^{\alpha}}{\partial z_k}z'_k.$$

With those notation the tangency property is equivalent to solve the system

\[
\left\{ {\begin{array}{ccccccc}
 A^{i}_{(0,\cdots,0)} &+& A^{i}_{(1,0,\cdots,0)}z_1 &+& \cR_0(z,A) & = & 0  \\
                        & & A^{i}_{(1,0,\cdots,0)}z'_1 &+& \cR_1(z,A) & = & 0  
 \end{array} } \right.
\]
and as $z'_1\neq 0$ one can solve this in the straightforward way,

\[
\left\{ {\begin{array}{ccccc}
 A^{i}_{(1,0,\cdots,0)} &=&\frac{-1}{z'_1} \cR_1(z,A)   & &  \\
 A^{i}_{(0,\cdots,0)} &=& \frac{-z_1}{z'_1} \cR_1(z,A) &-& \cR_0(z,A)  \\                        
 \end{array} } \right.
\]

We see that the pole order of vector fields obtained this way is less than $N$ in the $z_i$'s.
This is where we get the improvement on the pole order. 
Now in order to span all the other directions, we can take the vector fields constructed by J. Merker, and the pole order of those fields will be less than $N$. 
For the reader's convinience we recall them here, without the proof, and refere to \cite{Mer09} for the details.\\  
First we recall how to construct vector fields of higher length in the $\frac{\partial}{\partial a^{i}_{\alpha}}$'s. For any $\alpha\in \bN^N$ such that $|\alpha |\leq d_i$ and any $\ell \in \bN^N$ such that $|\ell |\leq N$, set 

$$T_{\alpha}^{\ell}=\sum_{\substack{\ell' +\ell''=\ell \\ \ell',\ell''\in \bN^N}}\frac{\ell !}{\ell'!\ell''!}z^{\ell''}\frac{\partial}{\partial a_{\alpha-\ell}}.$$ 
Those vector fields are of order $N$ in is $z_i$'s, they are also tangent to $T_{\cX^0/\gP^0}$ and with the vector fields constructed before, they will span all the $\frac{\partial}{\partial a^{i}_{\alpha}}$ directions.\\

To span the $\frac{\partial}{\partial z_j}$ directions, for all $1\leq j\leq N$ we set 

$$T_j=\frac{\partial}{\partial z_j}-\sum_{|\alpha|\leq d_1-1}a^{1}_{\alpha +e_j}(\alpha_j +1)\frac{\partial}{\partial a^{1}_{\alpha}} \cdots -\sum_{|\alpha|\leq d_c-1}a^{c}_{\alpha +e_j}(\alpha_j +1)\frac{\partial}{\partial a^{c}_{\alpha}}$$

where $e_j=(0,\cdots,0,1,0,\cdots, 0)$ is the $N-$uple where the only non zero term is in slot $j$. It is now a straightforward to check that those vector fields are tangent to $T_{\cX^0/\gP^0}$. And they are of order $1$ in each of the $a^{i}_{\alpha}$'s.\\
We recall now how to span the $\frac{\partial}{\partial z'_k}$ direction. For any $(\Lambda^{\ell}_k)^{1\leq \ell \leq N}_{1\leq k \leq N}\in GL_N(\bC)$   we look for vect fields of the form 

$$T_{\Lambda}=\sum_{k=1}^{N}\left( \sum_{\ell=1}^{N} \Lambda^{\ell}_{k}z'_{\ell} \right)\frac{\partial}{\partial z'_k} +\sum_{|\alpha| \leq d_1}A^{1}_{\alpha}(z,a,\Lambda)\frac{\partial}{\partial a^{1}_{\alpha}}+ \cdots + \sum_{|\alpha| \leq d_c}A^{c}_{\alpha}(z,a,\Lambda)\frac{\partial}{\partial a^{c}_{\alpha}}.$$

Now for each such vector field and for any $1\leq i\leq c$ set 

$$T^{i}_{\Lambda}=\sum_{k=1}^{N}\left( \sum_{\ell=1}^{N} \Lambda^{\ell}_{k}z'_{\ell} \right)\frac{\partial}{\partial z'_k} +\sum_{|\alpha| \leq d_i}A^{i}_{\alpha}(z,a,\Lambda)\frac{\partial}{\partial a^{i}_{\alpha}}.$$

Now we can easily check that we have

\[
\left\{ {\begin{array}{ccc}
 T_{\Lambda}(f_{i}) &=&T^{i}_{\Lambda}(f_{i}) \\
 T_{\Lambda}(f'_{i}) &=&T^{i}_{\Lambda}(f'_{i})  \\                        
 \end{array} } \right.
\]

Therefore to construct vector fields tangent to $T_{\cX^0/\gP^0}$ we can solve those equations for each $1\leq i\leq c$ independently, but this can be done using Merker's result on this type of vector fields. By doing so we find for each $1\leq i \leq c$ solutions of the type

$$ A^{i}_{\alpha}(z,a,\Lambda)=\sum_{|\beta|<N}\mathcal{L}^{\beta}_{\alpha,i}(a,\Lambda)z^{\beta} $$

where  $\mathcal{L}^{\beta}_{\alpha,i}$ is bilinear in $(a,\Lambda)$. Therefore the the constructed fields will have order less than $N$ in the $z_j$'s and of order $1$ in each of the $a^{i}_{\alpha}$. We refer again to \cite{Mer09} for proof of those facts. This leads to the desired result.


\subsection{Effective Existence of Symmetric Differentials}\label{effectifsymmetric}

The special case when $\kappa=1$ in Theorem \ref{thmjet} (that is, when the codimension is greater than the dimension) is of particular interest to us here, therefore  we will give an effective bound on the degree in this case. This will allow us to give an explicit bound on the degree in our main results. First we rewrite this theorem in the present situation.


\begin{theorem}\label{nonvanishingeff}
Fix $a\in\mathbb{N}$. Then there exists a constant $\Gamma_{N,n,a}$ such that if
$X\subset \mathbb{P}^N$ is a smooth complete intersection of dimension $n$, codimension $c$ and multidegree $(d_1,\cdots,d_c)$ satisfying $n\leq c$ and $d_i\geq \Gamma_{N,n,a}$ then $\mathcal{O}_{\mathbb{P}(\Omega_X)}(1)\otimes\mathcal{O}_{X}(-a)$ is big. In particular, when $m\gg 0$,
$$H^0(X,S^m\Omega_X\otimes \mathcal{O}_X(-am))\neq 0.$$ 
\end{theorem}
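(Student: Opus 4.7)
The plan is to apply holomorphic Morse inequalities on $X_1=\mathbb{P}(\Omega_X)$ directly, following the scheme of the proof of Theorem \ref{thmjet} but in the much simpler geometry of $\kappa=1$, where the nef decomposition of $\mathcal{O}_{\mathbb{P}(\Omega_X)}(1)\otimes \pi^*\mathcal{O}_X(-a)$ is elementary. Set
$$F:=\mathcal{O}_{\mathbb{P}(\Omega_X)}(1)\otimes \pi^*\mathcal{O}_X(2),\qquad G:=\pi^*\mathcal{O}_X(a+2),$$
so that $F-G=\mathcal{O}_{\mathbb{P}(\Omega_X)}(1)\otimes\pi^*\mathcal{O}_X(-a)$. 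Both are nef: $F$ because $\Omega_X(2)$ is globally generated (Lemma \ref{Han} exhibits it as a quotient of a trivial bundle, whence $\mathcal{O}_{\mathbb{P}(\Omega_X(2))}(1)=F$ is globally generated), and $G$ because $a+2>0$. By the Morse criterion recalled just before the proof of Theorem \ref{thmjet}, bigness of $F-G$ will follow once we establish
$$F^{n_1}\;>\;n_1\cdot F^{n_1-1}\cdot G,\qquad n_1:=\dim X_1=2n-1,$$
and the announced non-vanishing $H^0(X,S^m\Omega_X\otimes\mathcal{O}_X(-am))\neq 0$ for $m\gg 0$ is then immediate from the identification $\pi_*\mathcal{O}_{\mathbb{P}(\Omega_X)}(m)=S^m\Omega_X$.

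The next step is to reduce both sides to intersection numbers on $X$ via the pushforward formula $\pi_*(u^{n-1+j})=s_j(\Omega_X)$, where $u:=c_1(\mathcal{O}_{\mathbb{P}(\Omega_X)}(1))$. Writing $c_1(F)=u+2h$ and expanding binomially,
$$F^{2n-1}\;=\;\sum_{k=0}^{n}\binom{2n-1}{k}\, 2^k\int_X h^k\, s_{n-k}(\Omega_X),$$
$$F^{2n-2}\cdot G\;=\;(a+2)\sum_{k=0}^{n-1}\binom{2n-2}{k}\, 2^k\int_X h^{k+1}\, s_{n-1-k}(\Omega_X).$$
Since $n\leq c$, the computation of section \ref{sectionsegre} applies for every $0\leq\ell\leq n$: it gives $s_\ell(\Omega_X)=\tilde{s}_\ell(d)\, h^\ell$ with $\tilde{s}_\ell$ a polynomial of degree $\ell$ in $(d_1,\ldots,d_c)$ whose dominant part is the positive elementary symmetric polynomial $e_\ell(d_1,\ldots,d_c)$. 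Combined with $\int_X h^n=d_1\cdots d_c$, the $k=0$ summand on the left contributes $e_n(d)\cdot d_1\cdots d_c$ to $F^{2n-1}$, a polynomial of degree exactly $N=n+c$ in the $d_i$'s with positive dominant coefficient, whereas every term of $F^{2n-2}\cdot G$ carries an extra factor of $h$ and is therefore $o(d^N)$ by Lemma \ref{technical}(1).

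Consequently the strict Morse inequality holds as soon as each $d_i$ exceeds some threshold $\Gamma_{N,n,a}$ depending only on $N$, $n$ and $a$, and bigness follows. The main obstacle is not the qualitative existence, which is settled by the degree count above, but the quantitative refinement announced by the section heading: to produce an explicit $\Gamma_{N,n,a}$ one must expand every $\tilde{s}_\ell(d)$ completely as a polynomial in the $d_i$'s, retain all binomials $\binom{2n-1}{k}$ and $\binom{2n-2}{k}$ together with the factors $2^k$ and $a+2$, and bound the non-dominant contributions on the left against the full expansion on the right. This combinatorial estimate is elementary but delicate; carrying it out is precisely the purpose of the computations that follow in this subsection, and it supplies the numerical bound invoked in the statements of section \ref{mainresult}.
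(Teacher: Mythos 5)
Your proof is correct and follows essentially the same route as the paper: the paper obtains Theorem \ref{nonvanishingeff} as the special case $\kappa=1$ of Theorem \ref{thmjet}, with the same choice $F=\mathcal{O}_{\mathbb{P}(\Omega_X)}(1)\otimes\pi^*\mathcal{O}_X(2)$, $G=\pi^*\mathcal{O}_X(a+2)$, the same pushforward to Segre classes on $X$, and the same identification of the dominant degree-$N$ term. You explicitly defer the extraction of an explicit $\Gamma_{N,n,a}$, which is also how the paper proceeds (the remainder of section \ref{effectifsymmetric} is devoted to that estimate).
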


We give a rough bound on $\Gamma_{N,n,a}$ that works for any $N,n,a$ and afterwards we  give a better bound when $n=2$.

\begin{remark}
We would like to mention that O. Debarre proved in \cite{Deb05}, using Riemann-Roch computations, that $\cO_{\bP(\Om_X)}(1)\otimes \cO_X(1)$ is big under the assumptions on the dimension and the degree.
\end{remark}


\subsubsection{Segre Classes}

We start by giving the detailed expression of the segre classes we estimated previously.
Recall that we had the formula
\begin{eqnarray}
s(\Omega_X)=(1- h  + h^2-...)^{N+1}\prod_{i=1}^c(1\!+\! d_i h). \nonumber
\end{eqnarray}
Let us introduce another notation, let 
$$\epsilon_i(d_1,\cdots ,d_c) := \sum_{1\leq j_1<...<j_i\leq c} d_{j_1}...d_{j_i}.$$ 
And also, $\epsilon_i(d_1,\cdots ,d_c)=0$ if $i>c$.
Now, by expending and since $h^k=0$ if $k>n$  we get,

\begin{eqnarray}
s(\Omega_X)&=&\left( \sum_{k=0}^{n}\binom{N+k}{k}(-1)^kh^k \right)\left(\sum_{i=0}^c \epsilon_i(d_1,\cdots ,d_c)h^i\right)\nonumber \\
&=& \sum_{k=0}^{n} \sum_{i=0}^{n}\binom{N+k}{k} (-1)^k \epsilon_{i}(d_1,\cdots,d_c)h^{i+k} \nonumber \\
&=& \sum_{j=0}^{n} \sum_{k=0}^{j}\binom{N+k}{k}  (-1)^k \epsilon_{j-k}(d_1,\cdots,d_c)h^{j} \nonumber
\end{eqnarray}

And from this we deduce the explicit form of the Segre classes that will be used afterwards,

\begin{eqnarray}\label{segre}
s_j(\Omega_X)=\sum_{k=0}^{j}\binom{N+k}{N}  (-1)^k \epsilon_{j-k}(d_1,\cdots,d_c)h^{j}
\end{eqnarray}

\subsubsection{Explicite Intersection Computations}

Here we will do explicitly, in this context, the intersection computations we did during the proof of \ref{thmjet}. We take the same notations, under the assumption $n\leq c$.

\begin{eqnarray}
 F^{2n-1}&-&(2n-1)F^{2n-2}\cdot G \nonumber \\
 &=& \int_{\bP(\Om_X)}(u+2h)^{2n-1}-(2n-1)(u+2h)^{2n-2}(2+a)h \nonumber \\
 &=& \int_{\bP(\Om_X)}\sum_{i=0}^{2n-1}\binom{2n-1}{i}u^{2n-1-i}(2h)^i - (2n-1)(2+a)\sum_{j=0}^{2n-2} \binom{2n-2}{j}u^{2n-2-j}(2h)^j h \nonumber \\
 &=& \int_{\bP(\Om_X)}u^{2n-1}+\sum_{i=1}^{2n-1}\left(2^i \binom{2n-1}{i} - (2n-1)(2+a)2^{i-1} \binom{2n-2}{i-1}\right) u^{2n-1-i}h^i \nonumber \\  
 &=& \int_{\bP(\Om_X)}u^{2n-1}+\sum_{i=1}^{2n-1} 2^{i-1}(2-i(2+a))\binom{2n-1}{i}u^{2n-1-i}h^i\nonumber \\
 &=& \int_{X} \sum_{i=0}^{n} 2^{i-1}(2-i(2+a))\binom{2n-1}{i}s_{n-i}h^i\nonumber
\end{eqnarray}
 
Now we will use formula (\ref{segre}) to see how this intersection product depend on the multidegree $(d_1,\cdots ,d_c)$. To ease the notations we will also just write $\epsilon_i$ instead of $\epsilon_i(d_1,\cdots,d_c)$. 

\begin{eqnarray}
 F^{2n-1}&-&(2n-1)F^{2n-2}\cdot G \nonumber \\
 &=& \sum_{i=0}^{n} \int_{X} 2^{i-1}(2-i(2+a))\binom{2n-1}{i}s_{n-i}h^i\nonumber \\
 &=& \sum_{i=0}^{n}  2^{i-1}(2-i(2+a))\binom{2n-1}{i}\left( \sum_{k=0}^{n-i}\binom{N+k}{N}  (-1)^k \epsilon_{n-i-k}\right) \nonumber \\
 &=& \sum_{i=0}^{n}  \sum_{k=0}^{n-i} (-1)^k  2^{i-1}(2-i(2+a))\binom{2n-1}{i}\binom{N+k}{N}\epsilon_{n-i-k} \nonumber \\
 &=&  \sum_{j=0}^{n} \sum_{i=0}^{n-j}  (-1)^{n-i-j}  2^{i-1}(2-i(2+a))\binom{2n-1}{i}\binom{N+n-i-j}{N}\epsilon_{j} \nonumber \\
 &=&  \sum_{j=0}^{n} \D_{a}^{N,n,j} \epsilon_{j}(d_1,\cdots ,d_c).\nonumber
\end{eqnarray}
  
Where $$\D_{a}^{N,n,j}= (-1)^{n-j}\sum_{i=0}^{n-j}  (-1)^{i}  2^{i-1}(2-i(2+a))\binom{2n-1}{i}\binom{N+n-i-j}{N} $$  
  
\subsubsection{Rough Bound}

Now we will be able to give a straightforward rough bound for $\Gamma_{N,n,a}$ for any $N,n,a$. 
Let us recall a basic fact to estimate the zero locus of a polynomial in one real variable.

\begin{lemma}\label{estimate}
Let $P(x):=x^k+a_{k-1}x^{k-1}+\cdots +a_1x+a_0\in \mathbb{R}[x]$. If $x\geq 1+\max_{i} |a_i| $ then $P(x)>0$
\end{lemma}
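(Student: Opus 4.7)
The plan is to lower-bound $P(x)$ using the triangle inequality and absorb the lower-order terms into the leading term $x^k$ via a geometric series estimate. Set $M := \max_{0 \leq i \leq k-1} |a_i|$ and assume $x \geq 1 + M$ (in particular $x > 1$, so $x > 0$).

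First I would write
\[
P(x) = x^k + \sum_{i=0}^{k-1} a_i x^i \geq x^k - \sum_{i=0}^{k-1} |a_i| x^i \geq x^k - M \sum_{i=0}^{k-1} x^i,
\]
where the first inequality uses $x^i \geq 0$. Then I would evaluate the geometric sum: since $x > 1$,
\[
\sum_{i=0}^{k-1} x^i = \frac{x^k - 1}{x-1} < \frac{x^k}{x-1}.
\]
Plugging this into the previous bound gives
\[
P(x) > x^k - \frac{M x^k}{x-1} = x^k \cdot \frac{(x-1) - M}{x-1}.
\]

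Finally, the hypothesis $x \geq 1 + M$ is equivalent to $(x-1) - M \geq 0$, and together with $x^k > 0$ and $x - 1 > 0$, the right-hand side is nonnegative. Combined with the strict inequality above this forces $P(x) > 0$. The degenerate cases $M = 0$ (where $P(x) = x^k \geq 1$ for $x \geq 1$) and $k = 0$ (trivial) can be handled separately if needed. There is no real obstacle here; the only subtlety is making sure the strict inequality is preserved across the geometric-series step, which is why one passes through the estimate $(x^k - 1)/(x-1) < x^k/(x-1)$ rather than equality.
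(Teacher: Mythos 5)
Your proof is correct. The paper actually states this lemma as a "basic fact to recall" and gives no proof of its own, so there is nothing to compare against; your geometric-series argument is the standard one (essentially a Cauchy-type bound on the roots of a monic polynomial) and it is complete, including the careful handling of the degenerate cases $M=0$ and $k=0$ where the strict inequality in the geometric-series step would otherwise be lost.
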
  
Now we will see how to apply this in our situation. As previously we write

$$\epsilon_i(x_1,\cdots ,x_c) := \sum_{1\leq j_1<...<j_i\leq c} x_{j_1}...x_{j_i}.$$ 

Now take $k\leq c$ and
$$P(x_1,\cdots ,x_c)=\epsilon_k(x_1,\cdots ,x_c)+a_{k-1}\epsilon_{k-1}(x_1,\cdots ,x_c)+\cdots +a_1\epsilon_1(x_1,\cdots ,x_c)+a_0.$$

We want to find a $r\in \mathbb{R}$ such that if $x_i\geq r$ for all $1\leq i \leq c$ then $P(x_1,\cdots,x_c)>0$. We will be done if we are able to find $r\in \mathbb{R}$ satisfying $P(r,\cdots,r)>0$ and $\frac{\partial P}{\partial x_i}(x_1,\cdots,x_c)>0$ for all $1\leq i \leq c$ as soon as $x_j\geq r$ for all $1\leq i \leq c$. Now observe that since the $\epsilon_i$'s are symmetric, we have to check the positivity of just one partial derivative, let's say $\frac{\partial P}{\partial x_c}$.
By induction, we are left to find $r\in \mathbb{R}$ satisfying $P(r,\cdots,r)>0$, $\frac{\partial P}{\partial x_c}(r,\cdots ,r)>0$, $\frac{\partial }{\partial x_{c-1}}\frac{\partial P}{\partial x_c}(r,\cdots ,r)>0$, $\cdots$,  $\frac{\partial }{\partial x_1}\cdots \frac{\partial }{\partial x_{c-1}}\frac{\partial P}{\partial x_c}(r)>0$. But we also have 
$$\frac{\partial \epsilon_j}{\partial x_c}(x_1,\cdots,x_c)=\epsilon_{j-1}(x_1,\cdots,x_{c-1}). $$
and thus 

$$\frac{\partial P}{\partial x_c} =\frac{\partial }{\partial x_c} \sum_{i=0}^k a_{i}\epsilon_{i}(x_1,\cdots ,x_c)=\sum_{i=0}^{k-1} a_{i+1} \epsilon_{i}(x_1,\cdots,x_{c-1}). $$
Where $a_k=1$. And similarly 

$$\frac{\partial }{\partial x_(c-j)}\cdots \frac{\partial }{\partial x_{c-1}}\frac{\partial P}{\partial x_c}=\sum_{i=0}^{k-j} a_{i+j} \epsilon_{i}(x_1,\cdots,x_{c-j}).$$

Evaluating in $(r,\cdots,r)$ yields for any $0\leq j\leq c$
$$\frac{\partial }{\partial x_{(c-j)}}\cdots \frac{\partial }{\partial x_{c-1}}\frac{\partial P}{\partial x_c}(r,\cdots ,r)=\sum_{i=0}^{k-j} a_{i+j} \binom{c-j}{i} r^i.$$

Now we apply lemma \ref{estimate} to each of those polynomials and therefore we just have to give a bound for 

$$\max_{\substack{0\leq j \leq k-1 \\ j \leq i \leq k-1}} \left| a_i \frac{\binom{c-j}{i-j}}{\binom{c-j}{k-j}}\right|.$$

but since
$$\frac{\binom{c-j}{i-j}}{\binom{c-j}{k-j}} \leq \frac{\binom{c}{i}}{\binom{c}{k}} $$

we just have to give a bound for

$$\max_{ 0 \leq i \leq k-1} \left| a_i \frac{\binom{c}{i}}{\binom{c}{k}}\right|.$$

This is what we will do now in our intersection product computation, that is when $k=n$, $c=N-n$ and $a_j=\D_{a}^{N,n,j}$.
\begin{eqnarray}
\left| \D_{a}^{N,n,j}\frac{\binom{c}{j}}{\binom{c}{n}}\right| &=& \left|\sum_{i=0}^{n-j}  (-1)^{n-i-j}  2^{i-1}(2-i(2+a))\binom{2n-1}{i}\binom{N+n-i-j}{N}\frac{\binom{c}{j}}{\binom{c}{n}}\right|\nonumber \\
&\leq & \binom{N+n-j}{N}\frac{\binom{c}{j}}{\binom{c}{n}}+ \sum_{\substack{i=1}}^{n-j} 2^{i-1}(i(2+a)-2)\binom{2n-1}{i}\binom{N+n-i-j}{N}\frac{\binom{c}{j}}{\binom{c}{n}}\nonumber \\
&\leq & \binom{N+n-j}{N}\frac{\binom{c}{j}}{\binom{c}{n}}+  2^{n-j-1}((n-j)(2+a)-2)\sum_{\substack{i=1 }}^{n-j}\binom{2n-1}{i}\binom{N+n-i-j}{N}\frac{\binom{c}{j}}{\binom{c}{n}}\nonumber \\
&\leq & \binom{N+n-j}{N}\frac{\binom{c}{j}}{\binom{c}{n}}\nonumber \\
&+&  2^{n-j-1}((n-j)(2+a)-2)(n-j)\binom{N+n-j-1}{N}\frac{\binom{c}{j}}{\binom{c}{n}}\binom{2n-1}{n-j}\nonumber \\
&\leq & \binom{n}{j}\frac{(N+n)!(N-2n)!}{N!(N-n)!}\nonumber \\
&+&  2^{n-j-1}((n-j)(2+a)-2)(n-j)\binom{n}{j}\binom{2n-1}{n-j}\frac{(N+n)!(N-2n)!(n-j)}{N!(N-n)!(N+n-j)}\nonumber \\
&\leq &  \left( 2^{n-1}(n(2+a)-2)\frac{n^2}{(N+1)}\binom{2n-1}{n}+1\right)\binom{n}{\lfloor\frac{n}{2}\rfloor}\frac{(N+n)!(N-2n)!}{N!(N-n)!}. \nonumber 
\end{eqnarray}

This gives the desired (rough) bound for $\Gamma_{N,n,a}$,

\begin{eqnarray}
\Gamma_{N,n,a}\leq \left( 2^{n-1}(n(2+a)-2)\frac{n^2}{(N+1)}\binom{2n-1}{n}+1\right)\binom{n}{\lfloor\frac{n}{2}\rfloor}\frac{(N+n)!(N-2n)!}{N!(N-n)!}.\nonumber
\end{eqnarray}

Obviously this is far from optimal, but we won't go into more details for the general case. However even with such an estimate we can make a noteworthy remark. In our main theorem we use $\Gamma_{N,n,a+N}$. Now if we fix $n$ and if we let go $N$ to infinity (that is when the codimension becomes bigger and bigger) we get

$$\lim_{N\to+\infty}\Gamma_{N,n,N+a}\leq 2^{n-1}n^3\binom{2n-1}{n}\binom{n}{\lfloor\frac{n}{2}\rfloor}.$$   

That is to say that this bound decreases as $c$ gets bigger, and will have a limit depending only on $n$. This corresponds to the intuition that as the codimension increases the situation get's improved and the multidegree can be taken smaller. This feature should be a part of any bound found on $\Gamma$.  


\subsubsection{Bound in Dimension Two}\label{dim2}


Here we give a better bound in the case of surfaces, which is of particular interest to us since it is the case where we will have the strongest conclusion. Take the notation of the previous section, and let $n=2$ so that $c=N-2$. Fix $a\in \bN$ (the case $a=N$is the most important). We want to estimate
$F^{3}-3F^{2}\cdot G=\sum_{j=0}^{2} \D_{a}^{N,2,j} \epsilon_{j}(d_1,\cdots ,d_c)$. Where,
\begin{eqnarray}
\D_{a}^{N,2,2}&=& 1\nonumber \\
\D_{a}^{N,2,1}&=& -(N+1)-3a\nonumber \\
\D_{a}^{N,2,0}&=& \binom{N+2}{N}+3a(N+1)-12(a+1).\nonumber
\end{eqnarray}

Observe that $\D_{a}^{N,2,0}\geq 0$ when $N\geq 4$, thus $F^{3}-3F^{2}\cdot G\leq \epsilon_2(d_1,\cdots ,d_c)-\D_{a}^{N,2,1}\epsilon_1(d_1,\cdots ,d_c)$, and therefore we just have to bound one term,
$$\left| \D_{a}^{N,2,1} \frac{\binom{N-2}{1}}{\binom{N-2}{2}} \right|=2\frac{N+1+3a}{N-3}.$$

Thus $\Gamma_{N,2,a}\leq 2\frac{N+1+3a}{N-3}$. And in particular $\Gamma_{N,2,N}\leq \frac{8N+2}{N-3}$.\\ 
\\
\textit{Acknowledgement}. The author wishes to thank C. Mourougane for his many advices, and S. Diverio for very helpful discussions and comments.





\end{document}